\documentclass[12pt]{amsart}
\usepackage{amssymb,amsmath,amscd,epsfig,amsthm,multicol,multirow}
\usepackage{dsfont}
\usepackage{hyperref}
\usepackage{eepic}

\setlength{\unitlength}{0.08pt}
\allinethickness{0.4mm}

\oddsidemargin=.25in
\evensidemargin=.25in
\setlength{\textwidth}{15cm}
\addtolength{\voffset}{-.5in}
\setlength{\textheight}{9in}

\sloppy

\newcommand{\Aut}{\mathop{\rm Aut}\nolimits}
\newcommand{\SHAut}{\mathop{\rm SHAut}\nolimits}

\newcommand{\Stg}{\mathop{\rm Stab}\nolimits_\G}

\newcommand{\G}{\mathcal G}
\newcommand{\A}{\mathcal A}

\newcommand{\LL}{\mathcal L}
\newcommand{\Z}{\mathbb Z}

\newcommand{\Sym}{\mathop{\rm Sym}\nolimits}
\newcommand{\Rist}{\mathop{\rm Rist}\nolimits}
\newcommand{\St}{\mathop{\rm Stab}\nolimits}
\newcommand{\triv}{\mathop{\rm triv}\nolimits}
\newcommand{\Fix}{\mathop{\rm Fix}\nolimits}

\newtheorem{theorem}{Theorem}[section]
\newtheorem{corollary}[theorem]{Corollary}
\newtheorem{proposition}[theorem]{Proposition}
\newtheorem{lemma}[theorem]{Lemma}
\newtheorem{definition}[theorem]{Definition}
\newtheorem{question}{Question}

\begin{document}
\title[Groups acting essentially freely on the boundary of the tree]{Self-similar groups acting essentially freely on the boundary of the binary rooted tree}

\author[R.Grigorchuk]{Rostislav~Grigorchuk}
      \address{Department of Mathematics\\
               Texas A\&M University\\
               College Station, TX, 77843}
\thanks{
The first author was partially  supported by NSF grant DMS-1207699 and ERC starting
grant GA 257110 RaWG}
\email{\href{grigorch@math.tamu.edu}{grigorch@math.tamu.edu}
}

\author[D.Savchuk]{Dmytro~Savchuk}
      \address{Department of Mathematics and Statistics\\
               University of South Florida\\
               4202 E Fowler Ave\\
               Tampa, FL 33620-5700}
\email{\href{mailto:savchuk@usf.edu}{savchuk@usf.edu}}

\subjclass[2000]{20F65}

\begin{abstract}
We study the class of groups generated by automata that act essentially freely on the boundary of a rooted tree.  In the process we establish and discuss some general tools for determining if a group belongs to this class, and explore the connections of this class to the classes of just-infinite and scale-invariant groups. Our main application is a complete classification of groups generated by 3-state automata over 2-letter alphabet that are in this class.
\end{abstract}

\maketitle

\section{Introduction}
Groups generated by Mealy type automata represent an important and interesting class of groups with connections to different branches of mathematics, such as dynamical systems (including symbolic dynamics and holomorphic dynamics), computer science,  topology and probability.  Groups from this class were used to solve such important problems in group theory as Milnor's problem on groups of intermediate  growth,  Day problem on non-elementary amenability~\cite{grigorch:degrees}, and a strong Atiyah conjecture on $L^2$-Betti  numbers~\cite{grigorch_lsz:atiyah}. For more details about this class of groups we refer the reader to survey papers~\cite{gns00:automata,bartholdi_s:automata_groups10}.

In the whole class of groups generated by automata, there is an important subclass of self-similar groups.  These are the groups generated by  initial Mealy type automata that are determined by all states of a non initial automaton. The natural characteristic of such groups, which we will call \emph{complexity},  is the pair $(m,n)$ of two integers,  $m\geq 2, n\geq 2$,  where $m$ is a number of states and $n$ is a cardinality of the alphabet. There are 6 groups of complexity $(2,2)$ and the ``largest'' (most complicated) of them is the lamplighter group $\mathcal{L}=(\Z/2\Z)\wr\Z$. It is shown in~\cite{bondarenko_gkmnss:full_clas32_short} and~\cite{muntyan:phd} that there is not more than 115 different (up to isomorphism) groups of complexity $(3,2)$, although the number of corresponding automata up to certain natural symmetry is $194$. Even though the complete characterization of $(3,2)$-groups is not achieved yet, a lot of information about these groups has been obtained.

Study of groups generated by automata with small number of states and small alphabet is a very reasonable project which can be justified by following examples. An observation made in~\cite{gns00:automata} and~\cite{grigorch_z:lamplighter} that the lamplighter group can be generated by a 2-state automaton over a binary alphabet led to showing that the discrete Laplace operator on the Cayley graph of this group constructed using a generating set corresponding to states of automaton has pure point spectrum. This happened to be not only the first example of a group with discrete spectrum, but led to the construction of a counterexample to the strong Atiayh conjecture. Further, a careful search for an interesting (3,2)-groups allowed {\.Z}uk and the first author to bring the attention to a group generated by the automaton given in Figure~\ref{fig:basilica_autom} (see~\cite{grigorch_z:basilica}), that later got the name Basilica. With the help of this group not only one important problem on amenability was solved in~\cite{bartholdi_v:amenab} but an important method of proving amenability (now called the Munchausen trick) was developed. Moreover, study of this group initiated a new direction in holomorphic dynamics -- Iterated Monodromy Groups defined and studied by Nekrashevych and other researches~\cite{nekrash:self-similar}.
\begin{figure}
\begin{center}
\includegraphics{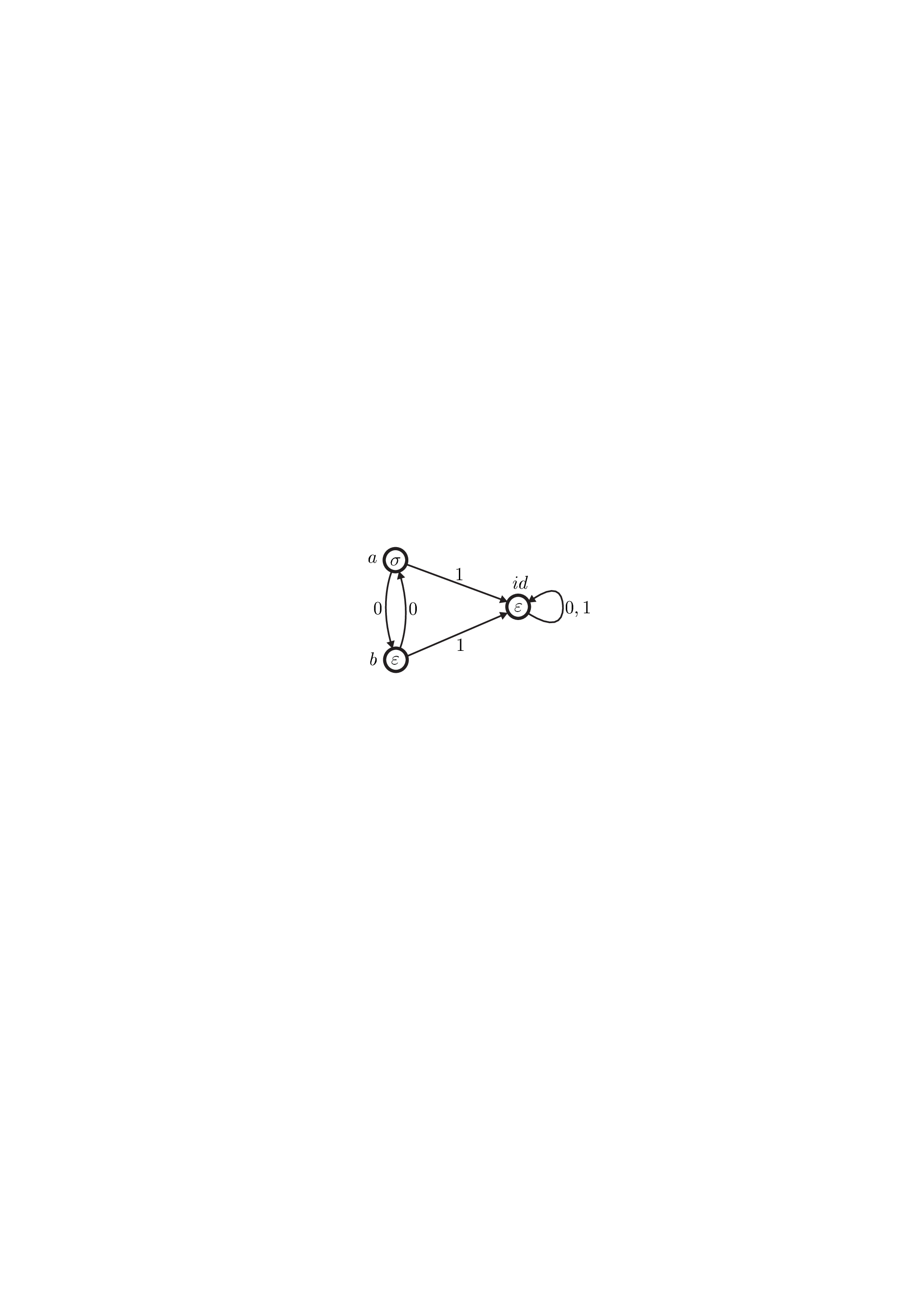}
\end{center}
\caption{Automaton generating an amenable but not subexponentially amenable Basilica group\label{fig:basilica_autom}}
\end{figure}

A principal discovery of the first author was that the class of (5,2)-automata groups contains groups of intermediate growth (between polynomial and exponential)~\cite{grigorch:milnor,grigorch:degrees}. Later Bux and P{\'e}rez in~\cite{bux_p:iter_monodromy} showed that such groups exist even among groups generated by (4,2)-automata.

Who knows what other problems, or interesting properties and directions of studies may come from careful study of groups of small complexity $(m,n)$? The authors are confident that approach based on careful study of (3,2), (2,3) and (4,2)-groups is perspective and productive. It also helps to understand what one can expect from the structure and properties of automaton groups, which in future may potentially lead to a result similar to Tits alternative.


Surprisingly, groups generated by automata are related to many topics in dynamical systems and ergodic theory. More generally, a far from being complete list of topics that have links to automata groups includes: fractal dynamics, symbolic dynamics, automatically generated sequences, Lyapunov stability, adding machines, etc. One of the links that we are going to exploit in this paper is as follows.

Groups generated by finite automata defined over the $m$-letter alphabet, in particular self-similar groups, naturally act on the $m$-regular rooted tree $T=T_m$ ($m$ being a cardinality of the alphabet) and on its boundary, which topologically is homeomorphic to the Cantor set.  This action preserves the uniform Bernoulli measure $\mu$ on the boundary.  Therefore, one can study a topological dynamical system  $(G,\partial T)$ or metric dynamical system $(G,\partial T, \mu)$. Ergodicity of the latter is equivalent to the level transitivity of the action of $G$ on $T$.

The important classes of group actions are topologically free actions and essentially free actions. For the first case, the assumption is that for each nonidentity element $g\in G$ the set of fixed points $Fix(g)$ is meager (i.e.~can be represented as a countable union of nowhere dense sets). In the second case we require that for any nonidentity element $g$ of a group the measure of the fixed point set of $g$ is zero.  These types of actions play especially important role in various studies in dynamical systems, operator algebras, and modern directions of group theory like theory of cost or rank gradient \cite{gaboriau:what_is_cost10,abert_n:rank_gradient12}. Self-similar groups acting essentially freely on $\partial T$ can potentially be used to construct new examples of scale-invariant groups~\cite{nekrashevych_p:scale_invariant} (we call a group $G$ scale-invariant if there is a sequence of finite index subgroups of $G$ that are all isomorphic to $G$ and whose intersection is trivial), and have connection to the class of hereditary just-infinite groups~\cite{grigorch:jibranch}. They also may lead   to the construction of new examples of expenders as indicated in~\cite{grigorch:dynamics11eng}. In the situation of a randomly chosen group acting on (unrooted) tree, typical actions are essentially free (see, for example,~\cite{abert_g:free_actions09}). But in the situation we consider in this paper, the freeness of the action is a rather rear event that sometimes requires nontrivial proofs.

The opposite to free actions are totally non-free actions considered recently in~\cite{grigorch:dynamics11eng,vershik:nonfree_sym_infty}. These are the actions, for which stabilizers of different points of the set of full measure are different. Surprisingly many groups generated by finite automata, in particular those of them that are branch or weakly branch, act totally non-free.  Totally non free actions are also important for the theory of operator algebras and for rapidly developing now theory of invariant random subgroups \cite{vershik:nonfree_sym_infty,abert_gv:kesten_irs12,bowen:irs_free12,bowen_gk:irs_lamplighter,dudko_m:higman_thompson12,dudko_m:characters_of_inductive_limits13}.

The goal of this paper is to describe all $(3,2)$-groups acting essentially freely on the boundary.  Although in general, for group actions on topological spaces with invariant measure, there is no connection between topological freeness and essential freeness, in the case of groups generated by finite automata acting on the boundary of a tree (in a way prescribed by determining automaton) these two notions are equivalent, as observed by Kambites, Silva and Steinberg in~\cite{kambites-s-s:spectra}.

To each (3,2)-automaton one assigns a unique number from 1 to 5832 according to certain natural lexicographic order on the set of all these automata (see Section~\ref{sec:autom} and~\cite{bondarenko_gkmnss:full_clas32_short}). Obviously, two automata whose minimizations can be obtained from each other by permuting the states, letters, or passing to the inverse automaton, generated isomorphic groups whose actions on $\partial T_2$ are conjugate. This defines an equivalence relation on the set of all automata that we call \emph{minimal symmetry} (this term reflects the fact that we first minimize automata before looking for a symmetry). By definition of this relation, up to group isomorphism for each equivalence class it is enough to study only one representative. Moreover, the action of a group generated by automaton $\A$ on the boundary of the tree is essentially free if and only if the action of a group generated by any automaton minimally symmetric to $\A$ has this property. In the main theorem below we list all groups generated by (3,2)-automata acting essentially freely on $\partial T_2$ and for each group we give in brackets the numbers of representatives of equivalence classes of automata that generate this group.

Or main result is:
\begin{theorem}
\label{thm:main}
Among all groups generated by $3$-state automata over 2-letter alphabet the only groups that act essentially freely on the boundary of the tree $T_2$ are the following:
\begin{itemize}
\item Trivial group [1];
\item Group $\Z/2\Z$ of order 2 [1090,1094];
\item Klein group $(\Z/2\Z)\times(\Z/2\Z)=\langle a,b\mid a^b=a^{-1}\rangle$\\~ [730,734,766,770,774,2232,2264,2844,2880];
\item $(\Z/2\Z)\times(\Z/2\Z)\times(\Z/2\Z)$ [802,806,810,2196,2260];
\item Infinite cyclic group $\Z$ [731,767,768,804,1091,2861,2887];
\item $\Z^2$ [771,803,807];
\item Infinite dihedral group $D_\infty$ [820,824,865,919,928,932,936,2226,2358,2394,\\2422,2874];
\item Baumslag-Solitar group $BS(1,3)=\langle t,x\mid t^x=t^3\rangle$ [870,924];
\item Baumslag-Solitar group $BS(1,-3)=\langle t,x\mid t^x=t^{-3}\rangle$ [2294,2320];
\item Extension $\bigl((\Z/2\Z)\wr Z\bigr)\rtimes(\Z/2\Z)$ of the lamplighter group by $\Z/2\Z$, where the nontrivial element of $\Z/2\Z$ inverts the canonical generators of the lamplighter group [891];
\item Free group $F_3$ of rank 3 generated by the Aleshin automaton [2240];
\item Free product $(\Z/2\Z)*(\Z/2\Z)*(\Z/2\Z)$ of three groups of order 2 generated by Bellaterra automaton [846];
\item Lamplighter group $\LL\cong(\Z/2\Z)\wr\Z$ [821,839,930,2374,2388];
\item Extension $\Z^2\rtimes(\Z/2\Z)$ of the $\Z^2$ by $\Z/2\Z$, where the nontrivial element of $\Z/2\Z$ inverts the elements of $\Z^2$ [2277,2313,2426];
\item Extension $\bigl((\Z/2\Z)^2\wr \Z\bigr)\rtimes (\Z/2\Z)$ of a rank 2 lamplighter group $\mathcal L_{2,2}\cong (\Z/2\Z)^2\wr \Z$ by $\Z/2\Z$, where the action of $\Z/2\Z$ on $\mathcal L_{2,2}$ is described in Theorem~\ref{thm:structure2193} [2193];
\item Extension $BS(1,3)\rtimes (\Z/2\Z)$ of Baumslag-Solitar group $BS(1,3)$ by $\Z/2\Z$, where the generator of $\Z/2\Z$ acts on $BS(1,3)=\langle t,x\mid t^x=t^3\rangle$ by inverting $t$ and fixing $x$ [2372],
\end{itemize}
where the numbers in brackets indicate corresponding numbers of $(3,2)$-automata defined in Section~\ref{sec:autom}. Moreover, all groups in this list except finite nontrivial groups, $F_3$, and $(\Z/2\Z)*(\Z/2\Z)*(\Z/2\Z)$ are scale-invariant.
\end{theorem}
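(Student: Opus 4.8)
\emph{Proof strategy.} The plan is to reduce the $5832$ automata to finitely many representatives, eliminate all but the listed ones by a small set of general criteria, identify the survivors and verify essential freeness for them, and finally establish scale-invariance group by group. Since essential freeness is an invariant of minimal symmetry (as remarked just before the statement), it is enough to analyse one automaton in each equivalence class. For these representatives I would lean on the enumeration and partial classification of $(3,2)$-automata and their groups in~\cite{bondarenko_gkmnss:full_clas32_short} and~\cite{muntyan:phd}, which already names many of the generated groups and flags the branch and weakly branch ones.

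The main work is to show that every automaton \emph{not} in the list fails to act essentially freely. I would use, in roughly increasing order of effort: (i) the equivalence of topological and essential freeness for these actions~\cite{kambites-s-s:spectra}, which reduces the task to producing a single nonidentity $g\in G$ acting trivially on some subtree — that is, $g$ fixes a vertex $v$ and the section $g|_v$ is trivial — and such a $g$ appears whenever a state, or a short product of states, has a trivial section at a vertex; (ii) the fact that a weakly branch group has nontrivial rigid vertex stabilizers, so that any nonidentity element of $\Rist_G(v)$ fixes the complement of the cylinder determined by $v$, a set of positive $\mu$-measure, whence the action is not essentially free (cf.~\cite{grigorch:dynamics11eng}); (iii) for the remaining borderline automata — including several that generate finite groups — a direct computation of $\mu(\Fix(g))$ for a well-chosen $g$.

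For each representative surviving these tests one must (a) identify the group up to isomorphism, reading it off from the wreath recursion and matching it with known presentations: this produces the abelian cases $\Z$, $\Z^2$ and $(\Z/2\Z)^k$, the metabelian cases $\LL$, $BS(1,\pm 3)$ and the semidirect products with $\Z/2\Z$ appearing in the statement, and the two exceptional cases $F_3$ (Aleshin automaton) and $(\Z/2\Z)*(\Z/2\Z)*(\Z/2\Z)$ (Bellaterra automaton); and (b) verify that the action is essentially free. For the soluble groups this is a direct cylinder argument — for instance the generator of $\Z$ is the binary odometer, whose nonzero powers fix no boundary point at all — while for the Aleshin and Bellaterra groups essential freeness is already in the literature. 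One also verifies the precise extension structures asserted for the exotic groups.

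Finally, for every infinite group $G$ in the list other than $F_3$ and the Bellaterra free product, I would produce a descending chain of finite-index subgroups, each isomorphic to $G$, with trivial intersection, in the spirit of~\cite{nekrashevych_p:scale_invariant}. The natural candidate is the chain of stabilizers $\St_G(v)\supseteq\St_G(v^2)\supseteq\cdots$ of a fixed ray; from the wreath recursion one checks that the section map $\St_G(v)\to G$ is onto and that the associated virtual endomorphism has trivial core, the latter using the soluble structure together with essential freeness, and this yields the chain. For $F_3$ and the Bellaterra group no virtual endomorphism with the required properties exists, so these are genuinely excluded; a nontrivial finite group is trivially not scale-invariant since it has no proper subgroup isomorphic to itself. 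The main obstacle throughout is the volume of the elimination step together with the parallel task of pinning down the exact isomorphism type and extension structure of the more exotic groups and, for each of them, exhibiting an explicit nonidentity element whose fixed-point set must then be shown to have measure zero — individually routine verifications, but numerous, with a handful of automata requiring genuinely ad hoc arguments.
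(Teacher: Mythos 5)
Your overall architecture matches the paper's: reduce to the 194 minimal-symmetry classes, eliminate most automata by exhibiting a nonidentity element with a trivial section at a fixed vertex (the paper does this systematically via Mikhailova systems for the first-level stabilizer plus a brute-force search), then identify and verify the 57 survivors, and finally get scale-invariance from the chain of vertex stabilizers along a ray with trivial stabilizer, using self-replication and essential freeness to see that each $\St_G(w_n)\cong G$. Two points, however, are genuine gaps rather than omitted routine work.

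First, your argument that the Aleshin and Bellaterra groups are not scale-invariant does not work as stated. Showing that ``no virtual endomorphism with the required properties exists'' only rules out the stabilizer-chain construction; scale-invariance asks for \emph{some} nested sequence of finite-index subgroups isomorphic to $G$ with trivial intersection, and these need not arise from a virtual endomorphism. You need an invariant that obstructs \emph{any} proper finite-index subgroup from being isomorphic to $G$: for $F_3$ the Nielsen--Schreier rank formula (a subgroup of index $k$ is free of rank $2k+1\neq 3$ for $k>1$), and for $(\Z/2\Z)*(\Z/2\Z)*(\Z/2\Z)$ the virtual Euler characteristic, $\chi(G)=-\tfrac12\neq 0$, so $\chi(K)=[G:K]\,\chi(G)\neq\chi(G)$ for every proper finite-index $K$. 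This is exactly what the paper does.

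Second, the positive verification of essential freeness for the non-virtually-abelian survivors is not a ``direct cylinder argument.'' For the lamplighter group, $BS(1,\pm3)$, and the three extension groups, triviality of $\Rist_G(1)$ is established via the diagonal-type criterion (Proposition~\ref{prop:ess_free_aut}): one must produce an explicit presentation of $G$, reduce the first-level stabilizer to diagonal form, and check that the resulting substitution on the free group sends every relator to a relator, then invoke Hopficity to upgrade the surjective endomorphism to an automorphism. For automata 2193 and 2372 this in turn requires first proving structure theorems that were not in the prior literature --- in particular, for 2193 the identification of an index-two subgroup with $\LL_{2,2}=(\Z/2\Z)^2\wr\Z$ rests on a delicate analysis of spherically homogeneous automorphisms and the ``antidepth'' of the conjugates $(a^2)^{y^{i}}$ to show they are pairwise distinct and independent of the finitary part. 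Your sketch of the $\Z$ case (the generator is conjugate to the odometer, whose nonzero powers act freely) is fine and is in fact cleaner than the paper's conjugation argument, but it does not generalize to these cases, which is where the bulk of the proof lives.
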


Note that the notation $\LL_{2,2}$ used in the above theorem is borrowed from~\cite{grigorch_k:lamplighter}, where $\LL_{p,n}$ denotes the group $(\Z/p\Z)^n\wr\Z$ called the rank $n$ lamplighter group. We also denote throughout the paper by $\LL$ the ``standard'' lamplighter group $\LL_{2,1}$. Also, throughout the paper $BS(1,n)$ will denote the Baumslag-Solitar group isomorphic to $\langle t,x\mid t^x=t^n\rangle$.

The paper is organized as follows. In Section~\ref{sec:autom} we recall main definitions from a theory of groups generated by automata, and introduce necessary notation related to the class of 3-state automata over 2-letter alphabet. Section~\ref{sec:types_of_actions} discusses various types of free actions and lists relevant results in this area. The main Theorem~\ref{thm:main} is proved in Section~\ref{sec:proof}. Finally, we finish the paper with open questions and concluding remarks in Section~\ref{sec:concluding}.

\noindent{\bf Acknowledgement.} The authors are sincerely grateful to Tatiana Smirnova-Nagnibeda and Volodymyr Nekrashevych for valuable comments, suggestions and discussions that helped to improve the paper. We also would like to thank the anonymous referee for numerous suggestions that enhanced the paper and significantly simplified the arguments in Subsection~\ref{ssec:2372}.

\section{Groups generated by automata and classification notations}
\label{sec:autom}
In this section we remind the main notions related to automaton groups and to the problem of classification of (3,2)-groups.

Let $X$ be a finite set of cardinality $d$ and let $X^*$ denote the the set of all finite words over $X$. This set naturally serves as a vertex set of a rooted tree in which vertex $v$ is adjacent to $vx$ for
any $v\in X^*$ and $x\in X$. The empty word $\emptyset$ corresponds to the root
of the tree and $X^n$ corresponds to the $n$-th level of the tree. Sometimes we will denote this tree by $T(X)$ or by $T_d$ if $|X|=d$.
We will be interested in the groups of automorphisms and semigroups
of endomorphisms of the tree $X^*$, where by endomorphisms we mean maps from the set of vertices $V(X^*)$ to itself that preserve the root $\emptyset$ and adjacency relation, and by automorphisms of $X^*$ we mean bijective endomorphisms. Any endomorphism of $X^*$ can be defined via
the notion of an initial automaton as described below.

\begin{definition}
A \emph{Mealy automaton} (or simply \emph{automaton}) is a tuple
$(Q,X,\pi,\lambda)$, where $Q$ is a set of states, $X$ is a
finite alphabet, $\pi\colon Q\times X\to Q$ is a transition function
and $\lambda\colon Q\times X\to X$ is an output function. If the set
of states $Q$ is finite the automaton is called \emph{finite}. If
for every state $q\in Q$ the output function $\lambda(q,x)$ induces
a permutation of $X$, the automaton $\A$ is called \emph{invertible}.
Selecting a state $q\in Q$ produces an \emph{initial automaton}
$\A_q$.
\end{definition}

Automata are often represented by the \emph{Moore diagrams}. The
Moore diagram of an automaton $\A=(Q,X,\pi,\lambda)$ is a directed
graph in which the vertices are the elements of $Q$ and the edges
have form $q\stackrel{x|\lambda(q,x)}{\longrightarrow}\pi(q,x)$ for
$q\in Q$ and $x\in X$. If the automaton is invertible, then it is
convenient to label vertices of the Moore diagram by the permutation
$\lambda(q,\cdot)$ and leave just first components from the labels
of the edges. To distinguish these two ways to draw a Moore diagram we will call the former type by Moore diagram of type I and the latter one by Moore diagram of type II. An example of Moore diagram of type II is shown in Figure~\ref{fig:L22autom}.

Any initial automaton induces a homomorphism of $X^*$. Given a word
$v=x_1x_2x_3\ldots x_n\in X^*$ it scans its first letter $x_1$ and
outputs $\lambda(x_1)$. The rest of the word is handled in a similar
fashion by the initial automaton $\A_{\pi(x_1)}$. In other words,
the functions $\pi$ and $\lambda$ can be extended to $\pi\colon
Q\times X^*\to Q$ and $\lambda\colon  Q\times X^*\to X^*$ via
\[\begin{array}{l}
\pi(q,x_1x_2\ldots x_n)=\pi(\pi(q,x_1),x_2x_3\ldots x_n),\\
\lambda(q,x_1x_2\ldots x_n)=\lambda(q,x_1)\lambda(\pi(q,x_1),x_2x_3\ldots x_n).\\
\end{array}
\]

By construction, any initial automaton acts on $X^*$ (viewed as a tree) as an
endomorphism. In the case of invertible automaton it acts as an
automorphism.

\begin{definition}
The semigroup (group) generated by all states of automaton $\A$ is
called the \emph{automaton semigroup} (\emph{automaton group}) and
denoted by $\mathds S(\A)$ (respectively $\mathds G(\A)$).
\end{definition}

Note, that the composition and the inverse of transformations defined by (finite) automata are again defined by (finite) automata. For example, the \emph{inverse automaton} $\A_q^{-1}$ to automaton $\A_q$ defining the inverse of the transformation of $X^*$ defined by $\A_q$ is obtained from $\A_q$ simply by flipping the components of the labels of all edges in its Moore diagram of type I.

Among general properties of automaton groups we will use that all of them are residually finite,  and thus Hopfian by Malcev's theorem~\cite{malcev:hopfian40}, i.e. each surjective endomorphism of an automaton group on itself is an isomorphism.

We will need a notion of a
\emph{section} of a homomorphism at a vertex of the tree. Let $g$ be
a homomorphism of the tree $X^*$ and $x\in X$. Then for any $v\in
X^*$ we have
\[g(xv)=g(x)v'\]
for some $v'\in X^*$. The map $g|_x\colon X^*\to X^*$ given by
\[g|_x(v)=v'\]
defines an endomorphism of $X^*$ and is called the \emph{section} of
$g$ at vertex $x$. Furthermore,  for any $x_1x_2\ldots x_n\in X^*$
we define \[g|_{x_1x_2\ldots x_n}=g|_{x_1}|_{x_2}\ldots|_{x_n}.\]

Given an endomorphism $g$ of $X^*$ one can construct an initial automaton
$\A(g)$ whose action on $X^*$ coincides with that of $g$ as follows.
The set of states of $\A(g)$ is the set $\{g|_v\colon  v\in X^*\}$
of different sections of $g$ at the vertices of the tree. The
transition and output functions are defined by
\[\begin{array}{l}
\pi(g|_v,x)=g|_{vx},\\
\lambda(g|_v,x)=g|_v(x).
\end{array}\]

Throughout the paper we will use the following convention. If $g$
and $h$ are the elements of some (semi)group acting on set $A$ and
$a\in A$, then
\begin{equation}
\label{eqn_conv}
gh(a)=h(g(a)).
\end{equation}
In particular, this means that we consider right action of $\Sym(X)$ on $X$. This agrees with the order of multiplication of permutations in \verb"GAP" (also corresponding to the right action) that we use extensively below. But for convenience of further notation we will still write the elements of the group on left. The reason we do not use the right action written on right lies in the standard convention to write words over finite alphabet from left to right, which means that when an element of an automaton (semi)group $g$ acts on a word $x_1x_2\ldots x_n$, it first processes the leftmost letter, then the second from left, etc.

Taking into account convention~\eqref{eqn_conv} one can compute
sections of any element of an automaton semigroup as follows. If
$g=g_1g_2\cdots g_n$ and $v\in X^*$, then

\begin{equation}
\label{eqn_sections} g|_v=g_1|_v\cdot g_2|_{g_1(v)}\cdots
g_n|_{g_1g_2\cdots g_{n-1}(v)}.
\end{equation}

Another popular name for automaton groups and semigroups is
self-similar groups and semigroups
(see~\cite{nekrash:self-similar}).

\begin{definition}
A (semi)group $G$ of (homomorphisms) automorphisms of $X^*$ is called \emph{self-similar} if all sections of each element of $G$ belong to $G$.
\end{definition}

Clearly every automaton group is self-similar as the sections of the generator of every such group are again generators, and for other elements it follows from the fact that the sections of the product are computed as products of sections. On the other hand, each self-similar group $G$ is generated by all states of automata corresponding to all of its elements. The union of all these automata is the automaton generating $G$ (possibly not the smallest one). In the rest of the paper depending on the context we will use both these terms.

Self-similarity allows us to define a natural embedding of any automaton group $G$
\[G\hookrightarrow G \wr \Sym(X)\]
defined by
\begin{equation}
\label{eqn:wreath}
G\ni g\mapsto (g|_0,g|_1,\ldots,g|_{d-1})\lambda(g)\in G\wr \Sym(X),
\end{equation}
where $g|_0,g|_1,\ldots,g|_{d-1}$ are the sections of $g$ at the vertices
of the first level, and $\lambda(g)$ is a permutation of $X$ induced by the action of $g$ on the first level of the tree.

The above embedding is convenient in computations involving the
sections of automorphisms, as well as for defining automaton groups. We will call it the \emph{wreath recursion} defining the group.

The following important notions related to groups generated by automata will be used throughout the text.

\begin{definition}
A self-similar group $G$ is \emph{self-replicating} if, for every vertex $u\in X^*$, the homomorphism $\phi_u\colon \St_G(u)\to G$ from the stabilizer of the vertex $u$ in $G$ to $G$, given by $\phi_u(g) = g|_u$, is surjective.
\end{definition}

\begin{definition}
We say that an element $g$ of a self-similar group (resp., a self-similar group $G$) acts spherically transitively, if $g$ (resp., $G$) acts transitively on each level $X^n$ of the tree $X^*$.
\end{definition}

Note, that a self-similar groups acting on binary tree is infinite if and only if it acts spherically transitively (see Lemma~3 in~\cite{bondarenko_gkmnss:full_clas32_short}).

An important class of groups acting on trees is the class of branch groups~\cite{grigorch:jibranch,bar_gs:branch}.

\begin{definition}Let $G$ be a group acting on the rooted tree $X^*$.\\[-3mm]
\label{def:rigid}
\begin{itemize}
\item The \emph{rigid stabilizer of a vertex $v\in X^*$} in $G$ is a subgroup $\Rist_G(v)$ of $G$ that consists of elements that act nontrivially only on the vertices that have $v$ as a prefix.\\
\item The \emph{rigid stabilizer of a level $n$} of $X^*$ in $G$ is a subgroup $\Rist_G(n)$ of $G$ that is generated by rigid stabilizers of all the vertices of this level.
\end{itemize}
\end{definition}

\begin{definition}
A group $G$ acting on the rooted tree $X^*$ is called\\[-3mm]
\begin{itemize}
\item \emph{weakly branch}, if for each $n\ge1$ the rigid stabilizer $\Rist_n(G)$ of the $n$-th level of $X^*$ is nontrivial;
\item \emph{branch}, if for each $n\ge1$ the rigid stabilizer $\Rist_n(G)$ of the $n$-th level of $X^*$ has finite index in $G$.
\end{itemize}
\end{definition}

Further, we will need a notion of a dual automaton $\hat\A$ to automaton $\A$, which is obtained from $\A$ by ``switching the roles'' of states and letters of the alphabet. The formal definition is given below.

\begin{definition}
Given a finite automaton $\A=(Q,X,\pi,\lambda)$ its \emph{dual
automaton} $\hat\A$ is a finite automaton
$(X,Q,\hat\lambda,\hat\pi)$, where
\[\begin{array}{l}
\hat\lambda(x,q)=\lambda(q,x),\\
\hat\pi(x,q)=\pi(q,x)
\end{array}\]
for any $x\in X$ and $q\in Q$.
\end{definition}

Note that the dual of the dual of an automaton $\A$ coincides with
$\A$. The semigroup $\mathds S(\hat\A)$ generated by dual automaton
$\hat\A$ acts on $Q^*$. This
action induces the action on $\mathds S(\A)$. Similarly, $\mathds
S(\A)$ acts on $\mathds S(\hat\A)$.

\begin{definition}
For an automaton semigroup $G$ generated by automaton $\A$ the
\emph{dual semigroup} $\hat G$ to $G$ is a semigroup generated by a
dual automaton $\hat\A$.
\end{definition}

A particularly important class of automata is the class of
bireversible automata as they give rise to interesting examples of groups, provide an approach to prove freeness properties, and admit solutions to certain algorithmic problems~\cite{gl_mo:compl,savchuk_v:free_prods,akhavi_klmp:finiteness_problem,klimann:finiteness}.

\begin{definition}
An automaton $\A$ is called \emph{bireversible} if it is invertible,
its dual is invertible, and the dual to $\A^{-1}$ are invertible.
\end{definition}

Now we describe shortly the notation and some basic facts used in the classification of $(3,2)$-groups~\cite{bondarenko_gkmnss:full_clas32_short}. These groups act on a binary rooted tree $T_2=X^*$ for $X=\{0,1\}$ and throughout the rest of the paper we will denote by $\sigma=(01)$ the nontrivial permutation of letters in $X$. We will usually omit writing the trivial permutation in wreath recursions, but sometimes we denote it by $\sigma^0$.

To every invertible 3-state automaton $\A$ with set of states
$S=\{\textbf0,\textbf1,\textbf2\}$ acting on the 2-letter alphabet
$X$ we assign a unique number as follows. Given the wreath
recursion
\[
 \left\{
\begin{array}{l}
  \textbf0=(a_{11},a_{12})\sigma^{a_{13}}, \\
  \textbf1=(a_{21},a_{22})\sigma^{a_{23}},  \\
  \textbf2=(a_{31},a_{32})\sigma^{a_{33}},
\end{array}
\right.
\]
defining the automaton $\A$, where $a_{ij}\in
\{\textbf0,\textbf1,\textbf2\}$ for $j\neq1$ and $a_{i3}\in
\{0,1\}$, $i=1,2,3$, assign the number
\[
\begin{array}{l}
\mathop{\rm Number}\nolimits(\mathcal A)= \\
\qquad\qquad a_{11}+3a_{12}+9a_{21}+27a_{22}+81a_{31}+\\
 \qquad\qquad 243a_{32}+729(a_{13}+2a_{23}+4a_{33})+1
\end{array}
\]
to $\A$. With this agreement the numbers assigned to automata range from $1$ to $5832$. The numbering of the automata is induced by the lexicographic ordering of tuples $(a_{11},a_{12},\ldots,a_{33})$ that define all automata in
the class. Each of the automata numbered $1$ through $729$ generates
the trivial group, since all vertex permutations are trivial in this
case. Each of the automata numbered $5104$ through $5832$ generates
the cyclic group $\Z/2\Z$ of order $2$, since both states represent the
automorphism that acts by changing all letters in every word over
$X$. Therefore the nontrivial part of the classification is
concerned with the automata numbered by $730$ through $5103$.

Denote by $\mathcal A_n$ the automaton numbered by $n$ and by $G_n$
the corresponding group $\mathds G(\A_n)$ of tree automorphisms. Sometimes we will use
just the number to refer to the corresponding automaton or group.

The following three operations on automata do not change the
isomorphism class of the group generated by the corresponding
automaton (and do not change the action on the tree up to conjugation):
\begin{enumerate}
\item[(i)] passing to inverses of all generators (equivalently, passing to the inverse automaton),
\item[(ii)] permuting the states of the automaton,
\item[(iii)] permuting the alphabet letters.
\end{enumerate}

\begin{definition}
Two automata $\mathcal A$ and $\mathcal B$ that can be obtained from
one another by using a composition of the operations ($i$)--($iii$),
are called \emph{symmetric}.
\end{definition}

Additional identifications can be made after automata minimization
is applied. Recall, that the minimization of an automaton is a standard procedure (see, for example,~\cite{eilenberg:automata2}) that identifies the states that induce identical transformations of $X^*$.

\begin{definition}
\label{def:minim_sym}
If the minimization of an automaton $\mathcal A$ is symmetric to the
minimization of an automaton $\mathcal B$, we say that the automata
$\mathcal A$ and $\mathcal B$ are \emph{minimally symmetric} and
write $\mathcal A\sim\mathcal B$.
\end{definition}

There are $194$ classes of $(3,2)$-automata that are pairwise not
minimally symmetric. At present, it is known that there are no more than $115$
non-isomorphic $(3,2)$-automaton groups and all these groups are listed in~\cite{bondarenko_gkmnss:full_clas32_short,muntyan:phd}.

In this paper, since we are looking for essentially free actions of groups, we will actually distinguish non minimally symmetric automata generating isomorphic groups, as the same group may have different actions on $\partial T_2$. So we will work with all 194 classes of not minimally symmetric automata.

\section{Types of actions and main tools}
\label{sec:types_of_actions}

There are different ways to define the freeness of a group actions. The definition below works in the general context of arbitrary topological (or, respectively, measure) space, but we will work only in the context of the actions of self-similar groups on the boundary $\partial T$ of the rooted tree $T$. Recall, that $\partial T$ consists of all infinite paths without backtracking initiating from the root (equivalently, $\partial X^*$ can be thought of as the set of all infinite words over $X$). The set $\partial T$ is endowed with a topology in which two paths are declared to be close if they have long common beginning. With this topology it is homeomorphic to the Cantor set. Further, one can define a uniform Bernoulli measure on $\partial T$ making this space a measure space. This measure is invariant under the action of the group of all automorphisms of $T$. Moreover, for any group $G<\Aut(T)$ acting spherically transitively on the levels of $T$, the uniform Bernoulli measure is a unique $\sigma$-additive $G$-invariant probabilistic measure on $\partial T$ (see Proposition 6.5 in~\cite{gns00:automata}).

Now we remind the general definition and set up some notation. Let $G$ be a countable group acting on a complete metric space $Y$. Denote by $Y_-$ the set of points with nontrivial stabilizer and by $Y_+$ the set of points with trivial stabilizer.

\begin{definition}~\\[-3mm]
\begin{enumerate}
\item The action $(G,Y)$ is said to be \emph{absolutely free} if all points have trivial stabilizers.
\item The action $(G,Y)$ is \emph{topologically free} if $Y_-$ is a meager set (i.e., it can be represented as a countable union of nowhere dense sets).
\item Suppose that the action $(G,Y)$ has a $G$-invariant (not necessarily finite) Borel measure $\mu$.
The action on the measure space $(G,Y,\mu)$ is said to be \emph{essentially free} if $\mu(Y_-)=0$.
\end{enumerate}
\end{definition}

In the context of self-similar groups acting on the boundary $\partial T(X)$ of corresponding tree, this gives immediately topological dynamical system $(G,\partial T(X))$. As mentioned above, $\partial T(X)$ can be considered as a measure space with a uniform Bernoulli measure, which enables us to talk about the essential freeness of the action of $G$ on $\partial T(X)$. An important result here is that in the case of groups generated by finite state automata the notions of topological freeness and essential freeness are identical according to the following two propositions.

\begin{proposition}[\cite{grigorch:dynamics11eng}, Corollary 4.3]
A spherically transitive essentially free action on the boundary of a tree is topologically free.
\end{proposition}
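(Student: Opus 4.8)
The plan is to prove the contrapositive: if a spherically transitive action of a self-similar group $G$ on $\partial T$ is \emph{not} topologically free, then it is not essentially free. So suppose there is a nontrivial $g \in G$ whose fixed point set $\Fix(g) \subseteq \partial T$ is not meager. Since $\partial T$ is a complete metric space (compact, homeomorphic to the Cantor set), the Baire category theorem tells us that a non-meager set cannot be nowhere dense; in fact $\Fix(g)$ is closed, being the intersection of the closed sets $\{\xi : g(\xi) = \xi\}$ — more precisely, $\Fix(g)$ is closed because $g$ is a homeomorphism. A closed non-meager set must have nonempty interior, so there is a cylinder set $C_v = v \partial T$ (for some finite word $v \in X^*$) entirely contained in $\Fix(g)$.

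The next step is to exploit self-similarity to upgrade ``$g$ fixes the whole cylinder $C_v$'' into a statement about positive measure. First, since $g$ fixes every ray through $v$, it fixes the vertex $v$, so $g \in \St_G(v)$ and the section $h := g|_v$ is a well-defined element of $G$ (here I use that $G$ is self-similar). The action of $g$ on $C_v \cong \partial T$ is conjugate to the action of $h$ on $\partial T$, so $g$ fixing all of $C_v$ forces $h$ to act trivially on $\partial T$, i.e. $h$ fixes every infinite ray. But the action of any automorphism of the rooted tree on $\partial T$ is faithful — an automorphism fixing every boundary point fixes every vertex — hence $h = g|_v$ is the identity in $G$.

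Now I argue that $\Fix(g)$ has positive measure (indeed measure at least $\mu(C_v) = 2^{-|v|} > 0$, which already contradicts essential freeness). Here is where spherical transitivity enters: $G$ acts transitively on the level $X^{|v|}$, so for every vertex $w$ at level $|v|$ there is $t \in G$ with $t(v) = w$. Consider the conjugate $g^t = t^{-1} g t$; since $g$ is nontrivial and $G$ acts by automorphisms, $g^t$ is nontrivial, and its section at $w$ is $(g^t)|_w = g|_v$ conjugated appropriately — one checks via the section formula~\eqref{eqn_sections} that $(t^{-1}gt)|_w = (t^{-1}|_w)\, (g|_{t^{-1}(w)})\, (t|_{g(t^{-1}(w))}) = (t^{-1}|_w)\, (g|_v)\, (t|_v)$, and since $g|_v = 1$ this collapses. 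The cleanest route is: since $g|_v = 1$, the element $g$ fixes $C_v$ pointwise; pick any $t \in G$, then $t(C_v) = C_{t(v)}$ and $g^{t^{-1}} = t g t^{-1}$ fixes $C_{t(v)}$ pointwise. Choosing $t$ so that $t(v)$ ranges over $X^{|v|}$, we could try to cover all of $\partial T$ — but the conjugates $tgt^{-1}$ are generally different elements, so this does not directly bound $\Fix(g)$ itself. Instead, the honest statement is simply: $\mu(\Fix(g)) \ge \mu(C_v) > 0$, and this alone contradicts $\mu(Y_-) = 0$, since $\Fix(g) \subseteq Y_-$ for our nontrivial $g$. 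So in fact spherical transitivity is only needed to guarantee, via Proposition 6.5 of~\cite{gns00:automata}, that the uniform Bernoulli measure is the natural invariant measure in play, and to ensure $\mu(C_v)>0$ for every cylinder.

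I expect the main subtlety to be the very first reduction — passing from ``$\Fix(g)$ non-meager'' to ``$\Fix(g)$ contains a cylinder'' — which relies on $\Fix(g)$ being closed (clear, as $g$ is continuous and $\partial T$ is Hausdorff) together with the Baire category theorem applied to the compact metric space $\partial T$; a closed set with empty interior in such a space is nowhere dense, hence meager. Once a fixing cylinder is in hand, the rest is the standard self-similarity bookkeeping: identify the section $g|_v$, note it must be trivial by faithfulness of the boundary action, and conclude $\mu(\Fix(g)) \ge 2^{-|v|} > 0$, contradicting essential freeness. One should double-check the faithfulness claim carefully in the self-similar setting, but it is immediate since distinct tree automorphisms differ at some vertex $w$, hence on the cylinder $C_w$, hence on $\partial T$.
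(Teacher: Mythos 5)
Your argument is correct: the paper itself does not prove this proposition but imports it from the cited reference, and your proof is the standard one — $\Fix(g)$ is closed, a closed non-meager set contains a cylinder $C_v$, and spherical transitivity forces the invariant measure to be the uniform Bernoulli measure so that $\mu(\Fix(g))\ge\mu(C_v)>0$, contradicting essential freeness. The digression about conjugating $g$ to move the cylinder is unnecessary (as you yourself note), and the appeal to the Baire category theorem is not actually needed since a closed set with empty interior is nowhere dense by definition, but neither point affects the validity of the proof.
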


\begin{theorem}[\cite{kambites-s-s:spectra}, Theorem 4.2.]
\label{thm:freeness_equiv}
For groups generated by finite automata, any topologically free action is essentially free.
\end{theorem}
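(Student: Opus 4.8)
The statement to prove is Theorem~\ref{thm:freeness_equiv}: for groups generated by finite automata, any topologically free action on $\partial T$ is essentially free. The plan is to argue by contraposition, showing that if the action is \emph{not} essentially free then it is \emph{not} topologically free. So suppose there exists a nonidentity element $g \in G$ with $\mu(\Fix(g)) > 0$. The key structural fact to exploit is that, because $G$ is generated by a finite automaton, each element $g$ has only finitely many distinct sections $g|_v$ over all $v \in X^*$; this finiteness is what converts a positive-measure fixed set into an \emph{open} invariant piece, which is exactly what destroys topological freeness.

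The main technical step will be a density/self-similarity argument on $\Fix(g)$. First I would observe that $\Fix(g) = \bigsqcup_{v \in X^n,\, g(v)=v} \{ v\xi : \xi \in \Fix(g|_v) \}$ for each level $n$, where the union is over vertices $v$ fixed by $g$ at level $n$ (vertices not fixed by $g$ contribute nothing, since $g$ moves the whole cylinder $vX^\omega$ off itself). Hence $\mu(\Fix(g)) = \sum_{v} 2^{-n}\,\mu(\Fix(g|_v))$. Since $\mu(\Fix(g)) > 0$, by a Lebesgue-density / pigeonhole argument one finds a level $n$ and a vertex $v \in X^n$ with $g(v) = v$ such that $\mu(\Fix(g|_v))$ is as close to $1$ as we like — more precisely, exceeding a threshold that will force $g|_v$ to fix a whole cylinder. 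The engine here is that the sections $\{g|_v\}$ range over a finite set $S$ of elements of $G$: pick $c = \min\{ \mu(\Fix(h)) : h \in S,\ \mu(\Fix(h)) > 0 \}$, which is a strictly positive number because $S$ is finite. Then as $n \to \infty$, the weighted average above forces some section $h = g|_v$ to satisfy $\mu(\Fix(h)) > 1 - c$. Combined with the same decomposition applied to $h$ (whose sections are again in the finite set $S$), the only way $\mu(\Fix(h))$ can exceed $1-c$ is if at some level \emph{every} vertex-section of $h$ with positive fixed-measure actually has fixed-measure $1$, which by induction on levels means that fixed-point-set equals an entire cylinder: there is a word $w$ with $h|_w$ trivial and $h$ fixing $w$, so $h$ fixes the cylinder $wX^\omega$ pointwise.

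Once we have \emph{some} nonidentity element (namely a suitable section $h$ of $g$, or a conjugate/section derived from $g$ — note $h$ is nonidentity since it has a nontrivial section along the path from $g$, or one argues $g$ itself has a fixed cylinder after replacing $g$ by a section) fixing an open set $wX^\omega$ pointwise, topological freeness fails immediately: $\Fix(h) \supseteq wX^\omega$ is open and nonempty, hence not nowhere dense, so $Y_-$ contains a nonmeager set and the action is not topologically free. One subtlety to handle carefully is that the element we produce with an open fixed set must genuinely be nontrivial in $G$; here finiteness of the automaton again helps, since if $g \ne 1$ in $G$ then $g$ acts nontrivially, and tracking the decomposition one keeps a nontrivial section (if all sections along all branches were trivial, $g$ would be trivial).

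The hard part, and the place requiring the most care, is the inductive argument that "$\mu(\Fix(h)) > 1-c$ for $h$ in the finite section-set" propagates down to "$h$ fixes a full cylinder." This is where the finiteness of the generating automaton is essential and cannot be dispensed with — for a general self-similar group the sections could form an infinite set, $c$ could be $0$, and the conclusion genuinely fails. I would set this up as a descending induction: if $\mu(\Fix(h)) > 1-c$ then in the level-one decomposition $\mu(\Fix(h)) = \frac{1}{2}\mu(\Fix(h|_0)) + \frac{1}{2}\mu(\Fix(h|_1))$ (dropping the $\frac12$ terms for non-fixed children), at least one fixed child $x$ must have $\mu(\Fix(h|_x)) > 1-c$ as well, and $h|_x \in S$; but also $h|_x$ with $\mu(\Fix(h|_x))>1-c$ and $\mu > 0$ must in fact have $\mu(\Fix(h|_x)) \ge c$ forces $\mu(\Fix(h|_x)) = 1$ once one notes the only values of $\mu(\Fix(\cdot))$ available on the finite set $S$ near $1$ that are consistent with the recursion are $0$ and $1$ — more precisely one shows the set $\{ h \in S : \mu(\Fix(h)) > 1 - c\}$ consists only of elements fixing a cylinder, by a minimal-counterexample argument using that any such $h$ pushes the defect $1 - \mu(\Fix(h))$ strictly down at each level while staying in $S$, which is impossible in a finite set unless the defect is already $0$ on a cylinder. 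Making this last dichotomy precise — choosing $c$ correctly and verifying the recursion forces the value $1$ — is the crux; everything else is bookkeeping with the cylinder decomposition and the definition of meager.
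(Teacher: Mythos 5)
The paper does not actually prove this statement; it is imported verbatim from \cite{kambites-s-s:spectra} (Theorem 4.2), so there is no in-paper argument to compare against and your proposal must stand on its own. Its overall strategy is the right one, and the first half is sound: arguing by contraposition; the decomposition $\mu(\Fix(g))=\sum_{v\in X^n,\,g(v)=v}2^{-n}\mu(\Fix(g|_v))$; the observation that an element of a group generated by a finite automaton has only finitely many distinct sections; the Lebesgue-density (equivalently, martingale convergence along the cylinder filtration) step producing $g$-fixed vertices $v$ with $\mu(\Fix(g|_v))$ arbitrarily close to $1$; and the correct endgame that a nonidentity element fixing a cylinder pointwise has a fixed-point set with nonempty interior, hence non-meager.

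The gap sits exactly where you yourself locate the crux. Your mechanism for converting ``$\mu(\Fix(h))>1-c$'' into ``$h$ fixes a cylinder'' is a minimal-counterexample argument resting on the claim that passing to a fixed child pushes the defect $1-\mu(\Fix(h))$ strictly down. That claim is false: from $\mu(\Fix(h))=\tfrac12\bigl(\mu(\Fix(h|_0))+\mu(\Fix(h|_1))\bigr)$ the defect of a child can be as large as \emph{twice} the defect of the parent (e.g.\ $\mu(\Fix(h|_0))=1$ and $\mu(\Fix(h|_1))=1-2\varepsilon$ give $\mu(\Fix(h))=1-\varepsilon$), so the induction does not close. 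Fortunately the ingredients you already have finish the proof directly: the values $\mu(\Fix(g|_v))$ over $g$-fixed $v$ range over a \emph{finite} set of reals (finitely many sections), and your density step shows this finite set has supremum $1$; hence the value $1$ is attained, i.e.\ $\mu(\Fix(g|_v))=1$ for some $g$-fixed $v$. Since $\Fix(g|_v)$ is closed and $\mu$ has full support, $g|_v$ fixes all of $\partial T_2$ and hence every vertex, so $g|_v=1$; then $g$ itself, which is nonidentity by hypothesis, fixes the open cylinder $vX^{\omega}$ pointwise --- which also disposes of your worry about producing a nontrivial element. Alternatively one can avoid density altogether: take $h$ maximizing $\mu(\Fix(\cdot))$ over the finite, section-closed set of sections of $g$ at $g$-fixed vertices; if the maximum is positive, the recursion forces $h$ to fix both children with both sections attaining the same maximum, so by induction $h$ fixes every vertex, $h=1$, and the maximum equals $1$, attained by the trivial section.
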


We note, that the terminology used in~\cite{kambites-s-s:spectra,steinberg_vv:series_free} is somewhat different from the one used here. For example, the topological freeness bears the name of freeness in the sense of Baire category, and the essential freeness is referred to as freeness in the sense of ergodic theory. Further, the definitions used for these types of freeness are different, but equivalent in the case of countable groups (in which we are interested anyway). Namely, if for $g\in G$ one denotes by $\Fix(g)$ the subset of $X$ fixed by $g$, then we have
\[X_-=\cup_{g\in G}\Fix(g).\]
Therefore, if $G$ is countable, one can replace the condition that $X_-$ has measure zero (resp., $X_-$ is meager) by the equivalent condition that $\Fix(g)$ has measure zero (resp., meager) for each nonidentity $g\in G$.

In order to establish that a group does not act topologically (and essentially) freely, one can just find an element $g\in G$ and a vertex $v\in X^*$ fixed by $g$ such that $g|_v$ is trivial (because in this case all points in the cylindrical set $c_v$, which is open (and has positive measure) will have $g$ in their stabilizers.

\begin{definition}
For a vertex $v\in X^*$ the set of all $g\in G$ that fix $v$ and such that $g|_v$ is trivial forms a subgroup $\triv_G(v)$ of $G$ called the \emph{trivializer} of $v$.
\end{definition}

\begin{definition}
The action of a group $G$ on a rooted tree is called \emph{locally nontrivial} if trivializers of all vertices of the tree are trivial.
\end{definition}

As observed above, if the action is not locally trivial, it cannot be topologically or essentially free. It is not hard to prove the converse in the case of countable group and topological freeness.

\begin{proposition}[\cite{grigorch:dynamics11eng}, Proposition 4.2.]
\label{prop:loc_triv}
The action of a countable group on the boundary of a tree is topologically free if and only if it is locally nontrivial.
\end{proposition}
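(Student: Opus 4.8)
The plan is to prove both directions of the equivalence, the nontrivial one being that topological freeness implies local nontriviality, since the converse was already observed in the text preceding the statement.

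First I would dispose of the easy implication. Suppose the action is \emph{not} locally nontrivial, so that there is a vertex $v\in X^*$ and a nonidentity $g\in\triv_G(v)$, i.e.\ $g$ fixes $v$ and $g|_v$ is trivial. Then for every infinite ray $w\in\partial T(X)$ beginning with $v$ we have $g(vw')=vg|_v(w')=vw'$, so $g$ fixes the whole cylinder $c_v=\{v\xi:\xi\in\partial X^*\}$. This cylinder is open and nonempty, hence not nowhere dense, so $\Fix(g)\supseteq c_v$ cannot be meager; since $g\neq e$, the action is not topologically free. (Alternatively, $c_v\subseteq Y_-$ and $Y_-\supseteq c_v$ is not meager.)

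For the forward direction, assume the action is locally nontrivial; I want to show $\Fix(g)$ is meager for every nonidentity $g\in G$, which by the countability remark in the text gives topological freeness. Fix $g\neq e$. The key observation is a structural description of $\Fix(g)\subseteq\partial T(X)$ in terms of the finite-depth behavior of $g$: a ray $\xi=x_1x_2x_3\cdots$ lies in $\Fix(g)$ exactly when $g$ fixes every prefix $x_1\cdots x_n$ of $\xi$, and for this it is necessary that each section $g|_{x_1\cdots x_n}$ fixes the letter $x_{n+1}$. Now I would argue that $\Fix(g)$ is a \emph{closed} subset of $\partial T(X)$ with empty interior. Closedness is immediate since the fixing of each prefix is a clopen condition and $\Fix(g)$ is the intersection over all $n$. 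For empty interior: if $\Fix(g)$ contained a nonempty open set, it would contain some cylinder $c_v$, meaning $g$ fixes every ray through $v$; but $g$ fixes $v$ itself (taking any such ray), and $g|_v$ must then act trivially on $\partial(vX^*)$, hence $g|_v$ is the trivial automorphism of $X^*$ — so $g\in\triv_G(v)$, contradicting local nontriviality unless $g=e$. Thus for $g\neq e$, $\Fix(g)$ is closed with empty interior, i.e.\ nowhere dense, hence a fortiori meager. Then $Y_-=\bigcup_{g\in G\setminus\{e\}}\Fix(g)$ is a countable union of nowhere dense sets, so meager, and the action is topologically free.

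The step I expect to be the main (though modest) obstacle is the clean justification that a cylinder $c_v\subseteq\Fix(g)$ forces $g|_v$ to be \emph{globally} trivial as an automorphism of $X^*$, not merely trivial on some levels: this is where one uses that $\partial(vX^*)$ is dense in the subtree $vX^*$ and that an automorphism of a rooted tree fixing the boundary pointwise is the identity — equivalently, that $g|_v$ has trivial sections and trivial vertex permutations at every vertex below $v$. Once that is in place the rest is routine topology of the Cantor set. It is worth remarking that this argument in fact shows the stronger statement that $Y_-$ is a \emph{countable union of closed nowhere dense sets} whenever the action is locally nontrivial, which is exactly the shape needed to later invoke Theorem~\ref{thm:freeness_equiv} and conclude essential freeness.
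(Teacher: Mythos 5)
Your proof is correct: the paper states this proposition with a citation to \cite{grigorch:dynamics11eng} and does not reprove it, but your easy direction reproduces exactly the cylinder-set observation made in the text just before the statement, and your converse (each $\Fix(g)$ is closed, and local nontriviality forces it to have empty interior, so $Y_-$ is a countable union of nowhere dense sets) is the standard argument the paper alludes to with ``it is not hard to prove the converse.'' The step you flagged as the main obstacle --- that $c_v\subseteq\Fix(g)$ forces $g|_v$ to be globally trivial --- is handled correctly, since every vertex below $v$ is a prefix of some ray in $c_v$ and a tree automorphism fixing all vertices is the identity.
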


This observation, together with Theorem~\ref{thm:freeness_equiv}, constitutes one of the main tools to determine that a self-similar group \emph{does not} act essentially freely on the boundary of a tree. Of course, one can simply apply a brute force to find such an element, but in case of self-replicating groups it can be made almost automatic in many cases by using the the procedure that we describe below. This procedure is based on ideas of Mikhailova~\cite{mihailova:direct_products58} and is outlined in Section 5 of~\cite{grigorch:dynamics11eng}.

Suppose $G=\mathds G(\A)$ is a group generated by automaton $\A$ with states $a_1,a_2,\ldots,a_n$. With a slight abuse of notation we will treat $a_i$'s as generators of $G$ and write $G=\langle a_1,a_2,\ldots,a_n\rangle$. First, we calculate the finite generating set $\{s_j,\ j\in J\}$ of the stabilizer of the first level of the tree $\St_G(1)$ in $G$. This is a subgroup of finite index and a Reidemeister-Schreier procedure can be used for that.

Let $F_A$ denote the free group generated by elements $a_1,a_2,...,a_n$. The wreath recursion that defines an automaton induces an embedding
\[F_A\hookrightarrow F_A \wr \Sym(X)\]
defined by
\begin{equation}
\label{eqn:wreath_free}
F_A\ni g\mapsto (g|_0,g|_1,\ldots,g|_{d-1})\lambda(g)\in F_A\wr \Sym(X).
\end{equation}
With a slight abuse of notation, we will denote by $s_j$ also a word over $A\cup A^{-1}$ in $F_A$ that is mapped to $s_j\in G$ under the canonical epimorphism $F_A\to G$. Then we decompose each $s_j\in F_A$ as a pair $(s_j|_0, s_j|_1)\in F_A\times F_A$ using the wreath recursion embedding~\eqref{eqn:wreath_free}. The first components $s_j|_0$ of above pairs generate a subgroup $H$ of $F_A$. After applying the Nielsen reduction to the generators of this subgroup, keeping track of second coordinates, we obtain the generating set of $\langle (s_j|_0, s_j|_1),\ j\in J\rangle<F_A\times F_A$ whose projection onto the first coordinate is Nielsen reduced~\cite{lyndon_s:comb_grp_theory}:
\begin{equation}
\label{eqn:mikh1}
t_1=(b_1,w_1),\ldots,t_l=(b_m,w_m),\quad t_{m+1}=(1,r_{1}),\ldots,t_{m+l}=(1,r_{l}),
\end{equation}
where $\{b_1,\ldots,b_m\}$ is a Nielsen reduced generating set for $H$, $w_i\in F_A$ and $m+l=|J|$. We will call such a representation for $\St_G(1)$ the \emph{Mikhailova system} for $G$. The reason for such name is explained below.

If any of $r_i$, $i=1,\ldots,l$ represents a nonidentity element of $G$, then the corresponding pair $(1,r_i)$ will represent a nonidentity element of $G$ that belongs to the trivializer of vertex $1$. Thus, the action of $G$ on $\partial T_2$ would not be essentially free.

Showing that the group actually does act essentially freely is usually much harder, as witnessed by the last two sections. The main tool here is the Proposition~\ref{prop:rist} below. This proposition is similar to Proposition~\ref{prop:loc_triv}, but it additionally uses self-similarity of a group. Recall that the notion of a rigid stabilizer was introduced in Definition~\ref{def:rigid}.

\begin{proposition}[\cite{grigorch:dynamics11eng}, Proposition 4.5.]
\label{prop:rist}
For a group $G$ generated by finite automaton, acting on a binary tree $T_2$, the action on $\partial T_2$ is essentially free if and only if the rigid stabilizer of the first level $\Rist_{G}(1)$ is trivial.
\end{proposition}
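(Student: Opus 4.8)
The plan is to prove both implications of Proposition~\ref{prop:rist} by combining Proposition~\ref{prop:loc_triv} (topological freeness $\Leftrightarrow$ local nontriviality), Theorem~\ref{thm:freeness_equiv} (for finite-state groups, topological freeness $\Leftrightarrow$ essential freeness), and the self-similar structure of the tree action. Throughout I would write $G \hookrightarrow G \wr \Sym(X)$ for the wreath recursion and freely use that sections of elements of $G$ lie in $G$.

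\textbf{The easy direction ($\Rist_G(1)$ nontrivial $\Rightarrow$ not essentially free).} Suppose $\Rist_G(1)$ is nontrivial; then there is a vertex $x\in X$ (with $X=\{0,1\}$) and a nonidentity $g\in\Rist_G(x)$. By definition of rigid stabilizer, $g$ fixes the vertex $x$ and acts trivially on every vertex of the tree that does \emph{not} have $x$ as a prefix; in particular, writing $y$ for the other letter, $g$ fixes $y$ and $g|_y = 1$. Hence $y\in X$ is a vertex whose trivializer $\triv_G(y)$ is nontrivial, so the action is not locally nontrivial, and by Proposition~\ref{prop:loc_triv} it is not topologically free, hence by Theorem~\ref{thm:freeness_equiv} not essentially free. (Concretely, every boundary point beginning with $y$ is fixed by $g$, and the cylinder $c_y$ has measure $1/2$.)

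\textbf{The hard direction ($\Rist_G(1)$ trivial $\Rightarrow$ essentially free).} By Theorem~\ref{thm:freeness_equiv} and Proposition~\ref{prop:loc_triv} it suffices to show the action is locally nontrivial, i.e.\ $\triv_G(v)=1$ for every vertex $v\in X^*$. I would argue by induction on $|v|$. For $|v|=1$: if $g\in\triv_G(x)$, then $g$ fixes $x$, $g|_x=1$, and $g$ also fixes the other letter $y$ (since $g$ fixes the whole first level, being in a point stabilizer on the binary tree forces it to fix both first-level vertices — or rather, $\triv_G(x)\subseteq\St_G(x)$, and one checks $g$ fixes $y$ too because $g|_x=1$ means $g$ has order dividing that of $g|_y$ acting below $y$; more cleanly, $g\in\triv_G(x)$ means exactly $g=(1,h)$ or $g=(h,1)$ in the wreath recursion with trivial root permutation). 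Thus $g$ acts nontrivially only below $y$, i.e.\ $g\in\Rist_G(y)\le\Rist_G(1)=1$, forcing $g=1$. For the inductive step, given $v=xw$ with $x\in X$, an element $g\in\triv_G(v)$ fixes $v$ hence fixes $x$, and its section $g|_x$ lies in $G$ (self-similarity) and satisfies $g|_x(w)=w$, $(g|_x)|_w = g|_v = 1$, so $g|_x\in\triv_G(w)$; by the inductive hypothesis $g|_x=1$. Then $g$ acts trivially at and below $x$, so $g$ either is trivial or acts nontrivially only below $y$, and again $g\in\Rist_G(y)=1$. Hence every trivializer is trivial, the action is locally nontrivial, and we are done.

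\textbf{Main obstacle.} The delicate point is the base case / the passage from "$g$ lies in the trivializer of one first-level vertex" to "$g$ lies in the rigid stabilizer of the other first-level vertex." One must be careful that $g\in\triv_G(x)$ genuinely forces $g$ to be supported in the subtree below the complementary vertex $y$: this uses that on the \emph{binary} tree, $\St_G(x)=\St_G(1)$ (a point stabilizer at level $1$ is the whole first-level stabilizer, since $|X|=2$), so $g$ fixing $x$ already means $g=(g|_x,g|_y)$ with trivial root permutation, and $g|_x=1$ then literally says $g\in\Rist_G(y)$. I would state this binary-tree simplification explicitly, since it is where the hypothesis "$T_2$" is used and it is what makes the clean equivalence with $\Rist_G(1)$ — rather than with the full family $\{\Rist_G(n)\}$ — possible.
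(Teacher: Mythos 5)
Your proof is correct. Note that the paper itself does not prove this proposition---it is imported verbatim from \cite{grigorch:dynamics11eng} (Proposition 4.5)---so there is no in-paper argument to compare against; what you have written is a sound self-contained derivation from the two other quoted results. The reduction (essential freeness $\Leftarrow$ topological freeness $\Leftrightarrow$ local nontriviality, via Theorem~\ref{thm:freeness_equiv} and Proposition~\ref{prop:loc_triv}) is the intended one, and the point you single out as the main obstacle is exactly right: on a two-letter alphabet the root permutation of any $g\in\St_G(x)$ is forced to be trivial, so $g\in\triv_G(x)$ literally means $g=(1,h)$ and hence $g\in\Rist_G(y)\le\Rist_G(1)$. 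This is precisely where the hypothesis $T_2$ is used, and the paper's remark that for non-binary trees local nontriviality cannot be phrased via rigid stabilizers confirms your diagnosis. Your induction step, which pushes a nontrivial element of $\triv_G(xw)$ down to a nontrivial element of $\triv_G(w)$ by taking the section at $x$, is also fine.

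Two small blemishes, neither fatal. First, in the easy direction you cite Theorem~\ref{thm:freeness_equiv} in the wrong direction: that theorem gives topologically free $\Rightarrow$ essentially free, so ``not topologically free'' does not yield ``not essentially free'' from it. Your parenthetical is the actual proof and should be promoted to the main text: $\Fix(g)\supseteq c_y$ has measure $\tfrac12>0$, so the action fails to be essentially free directly from the definition (this is also how the paper argues in the paragraph introducing trivializers). Second, the aside in the base case about ``$g$ has order dividing that of $g|_y$'' is not a real argument and should simply be deleted; the clean justification you give immediately afterwards (trivial root permutation plus $g|_x=1$ forces support below $y$) is complete on its own.
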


The problem is that it is usually harder to show that the rigid stabilizer is trivial, than to find an element witnessing its nontriviality. The main method here is based on finding the presentation of a group by generators and relators. Note, that for a non-binary tree the condition of local nontriviality cannot be formulated in terms of rigid stabilizers.

We now go back to Equations~\eqref{eqn:mikh1}. In the case when $H$ coincides with $F_A$, which is the case when $G$ is self-replicating, we get $m=n$ and this equation is transformed to (after reordering the generators, if necessary):

\[t_1=(a_1,w_1),\ldots,t_l=(a_n,w_n),\quad t_{n+1}=(1,r_{1}),\ldots,t_{n+l}=(1,r_{l}).
\]

We can further assume that all $r_i$'s represent the identity element in $G$ (otherwise, as stated above, the action of $G$ is not essentially free). Suppose additionally that
\[\langle w_1,w_2,\ldots,w_n\rangle=F_A.\]
Then the map $\phi\colon a_i\to w_i$ extends to an automorphism of $F_A$. In this case we say that the presentation of the group $G$ by a finite automaton belongs to the \emph{diagonal type}. This condition does not depend on how the pairs of elements are reduced by the
Nielsen transformations. Note, that the case when $\phi$ is the identity automorphism, one obtains a subgroup of $F_A\times F_A$ that was used by Mikhailova in~\cite{mihailova:direct_products58} to prove that the membership problem for direct products of free groups is algorithmically unsolvable. This is why we attribute this notion to Mikhailova.

The following proposition follows immediately from Proposition~\ref{prop:rist}.

\begin{proposition}[\cite{grigorch:dynamics11eng}, Proposition 5.1]
\label{prop:ess_free_aut}
Suppose that $G$ is a group generated by finite automaton acting on a binary tree and
having the first-level stabilizer that can be reduced by the Nielsen transformations to the diagonal type.
Let $\phi$ be the above-constructed automorphism of the free group $F_A$. Then the action is essentially free if and only if $\phi$ induces an automorphism of the group $G$.
\end{proposition}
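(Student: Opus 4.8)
The plan is to derive this directly from Proposition~\ref{prop:rist}, which says that the action is essentially free if and only if $\Rist_G(1)$ is trivial. So the whole task reduces to relating the triviality of $\Rist_G(1)$ to whether $\phi$ induces an automorphism of $G$. First I would set up notation carefully: we have the Mikhailova system in diagonal form, with generators of $\St_G(1)$ given (as words in $F_A$) by $t_1=(a_1,w_1),\ldots,t_n=(a_n,w_n)$ together with $t_{n+1}=(1,r_1),\ldots,t_{n+l}=(1,r_l)$, where all $r_i$ represent the identity in $G$. Since $\langle w_1,\ldots,w_n\rangle=F_A$, the map $\phi\colon a_i\mapsto w_i$ is an automorphism of $F_A$. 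Let $N$ be the kernel of the canonical epimorphism $\pi\colon F_A\to G$, so $G=F_A/N$. The key observation is that $\Rist_G(1)$ (or more precisely its projection to the $0$-section) is exactly the image in $G$ of the group generated by those first-components of elements of $\St_{F_A}(1)$ whose second component lies in $N$ — i.e., the image of $\{u\in F_A : (u,v)\in\St_{F_A}(1)\text{ with }\pi(v)=1\}$.

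Next I would make this precise. An element of $\St_G(1)$ lies in $\Rist_G(1)$ exactly when it fixes all vertices below $1$, i.e.\ when its $1$-section is trivial in $G$; symmetrically for the vertex~$0$. Working at the level of $F_A$: the subgroup $\St_{F_A}(1)$ is generated by $t_1,\ldots,t_{n+l}$, and a general element is a word $W(t_1,\ldots,t_{n+l})$. Its $0$-component is $W(a_1,\ldots,a_n,1,\ldots,1)$ and its $1$-component is $W(w_1,\ldots,w_n,r_1,\ldots,r_l)$. Since the $r_i$ are trivial in $G$, modulo $N$ the $1$-component equals $W(w_1,\ldots,w_n,1,\ldots,1)=\phi(W(a_1,\ldots,a_n,1,\ldots,1))$ — here I am using that $\phi$ is the endomorphism of $F_A$ sending $a_i\mapsto w_i$, and that the "free variables" $t_{n+1},\ldots,t_{n+l}$ contribute trivially. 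So if we write $u=W(a_1,\ldots,a_n,1,\ldots,1)\in F_A$, the pair represents an element of $\Rist_G(1)$ at vertex $0$ precisely when $\phi(u)\in N$, and $u$ itself ranges over all of $F_A$ as $W$ varies (because of the $t_1,\ldots,t_n$). Hence the projection of $\Rist_G(1)\cap\St_G(0\text{-ish})$... more cleanly: $\Rist_G(1)$ projects onto $\pi(\phi^{-1}(N))/$(something), and is trivial in $G$ iff $\phi^{-1}(N)\subseteq N$, i.e.\ iff $\phi(N)\subseteq N$ when combined with the symmetric argument using $\phi^{-1}$ in place of $\phi$. The condition $\phi(N)=N$ is exactly the statement that $\phi$ descends to an automorphism of $G=F_A/N$; note that $G$ being Hopfian (residually finite, as remarked in the excerpt) upgrades a descending \emph{endomorphism} that is onto to an automorphism, which is why "induces an automorphism" is the right phrasing and is equivalent to the invariance $\phi(N)=N$.

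Concretely, the steps I would carry out are: (1) identify $\Rist_G(1)$ with (the $G$-image of) the set of $u\in F_A$ such that $(u,v)\in\St_{F_A}(1)$ for some $v\in N$, using that $\Rist_G(1)$ consists of first-level stabilizing elements whose $1$-section is trivial, combined with the analogous description for vertex $0$ and the fact that on a binary tree $\Rist_G(1)$ and $\Rist_G(0)$ generate $\Rist_G(1\text{st level})$; (2) using the diagonal Mikhailova system and triviality of the $r_i$, show $u\in F_A$ arises this way iff $\phi(u)\in N$, so the relevant subgroup is $\phi^{-1}(N)$ (and symmetrically $\phi(N)$ for vertex $0$); (3) conclude $\Rist_G(1)=1$ iff $\phi^{-1}(N)\subseteq N$ and $\phi(N)\subseteq N$, iff $\phi(N)=N$; (4) observe $\phi(N)=N$ is equivalent to $\phi$ inducing a (necessarily bijective, by Hopficity) automorphism of $G$; (5) invoke Proposition~\ref{prop:rist} to finish.

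The main obstacle I anticipate is step~(2): being careful about what "the $w_i$ generate $F_A$" buys us versus what we actually need, and making sure the bookkeeping with the extra generators $t_{n+1},\ldots,t_{n+l}$ (the $(1,r_i)$) is airtight — in particular that an arbitrary element of $\Rist_G(1)$, not merely one of a special form, is captured, and that the passage from "$(u,v)$ with $v$ trivial in $G$" to "$\phi(u)$ trivial in $G$" does not secretly lose elements when $W$ involves the $t_{n+i}$ nontrivially as letters even though $r_i=1$ in $G$ (it does not, since one can first rewrite $v=W(w_1,\ldots,w_n,r_1,\ldots,r_l)$ modulo $N$ to $\phi(u)$, but one must check this rewriting does not alter $u$). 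A secondary subtlety is making the identification in step~(1) rigorous: one should argue that $\Rist_G(1)$ is precisely the set of $g\in\St_G(1)$ with $g|_1=1$ in $G$ (this is essentially the definition on a binary tree), and then transfer this to $F_A$ via the epimorphism.
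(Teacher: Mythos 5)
Your proposal is correct and follows exactly the route the paper intends: the paper's entire proof is the remark that the proposition ``follows immediately from Proposition~\ref{prop:rist}'', and you carry out precisely that reduction, identifying the vertex rigid stabilizers with $\phi(N)\not\subseteq N$ and $\phi^{-1}(N)\not\subseteq N$ (for $N=\ker(F_A\to G)$) via the diagonal Mikhailova system, and using Hopficity to equate $\phi(N)=N$ with $\phi$ inducing an automorphism. The bookkeeping you flag as a risk (that $v\equiv\phi(u)\bmod N$ for every $(u,v)$ in the stabilizer subgroup of $F_A\times F_A$, since the kernel of the first projection is the normal closure of the $(1,r_i)$ and the $r_i$ lie in $N$) goes through as you describe.
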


For some groups we use the following useful proposition that allows us to establish essential freeness of the action in the case of groups generated by finite bireversible automata, i.e. invertible automata, whose dual, and dual to the inverse are invertible as well.

\begin{proposition}[\cite{steinberg_vv:series_free}, Corollary 2.10]
\label{prop:bireversible}
A group generated by a bireversible automaton acts topologically and essentially freely on the boundary of the tree.
\end{proposition}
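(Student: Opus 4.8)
The plan is to reduce the statement to a local, combinatorial condition and then exploit the extra symmetry built into a bireversible automaton. Write $G=\mathds G(\A)$ with $\A=(Q,X,\pi,\lambda)$. By Theorem~\ref{thm:freeness_equiv} it suffices to establish topological freeness, since essential freeness then follows; and by Proposition~\ref{prop:loc_triv} topological freeness is equivalent to the action being locally nontrivial, i.e.\ to $\triv_G(v)=\{1\}$ for every $v\in X^*$ (equivalently, by Proposition~\ref{prop:rist}, to $\Rist_G(1)=\{1\}$ in the binary case). A short induction on $|v|$ using only self-similarity reduces this to single letters: if $g$ fixes $v=v'x$ and $g|_v=1$, then $g|_{v'}$ fixes $x$ and $(g|_{v'})|_x=g|_v=1$, so $g|_{v'}\in\triv_G(x)$; if every single-letter trivializer is trivial this forces $g|_{v'}=1$, and repeating the argument along the prefixes of $v$ forces $g=1$. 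So the whole problem is: for bireversible $\A$ and each $x\in X$, no nonidentity element of $G$ fixes $x$ with trivial section at $x$.

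Next I would unwind bireversibility. It says that $\A$, its dual $\hat\A$, and the dual $\widehat{\A^{-1}}$ of the inverse automaton are all invertible; equivalently the whole dihedral group of symmetries of the transition table of $\A$ consists of invertible automata, so in particular $\A^{-1}$ is reversible and $\hat\A$ is again bireversible. The key consequence I would extract is that, for each level $n$, the \emph{section-transport map} $\Psi_n\colon(Q\cup Q^{-1})\times X^n\to(Q\cup Q^{-1})\times X^n$, $(y,w)\mapsto(y|_w,\,y(w))$, is a bijection whose inverse is computed by $\A^{-1}$: one recovers $y$ from $y|_w$ and $y(w)$ using reversibility of $\A^{-1}$, and then recovers $w$ by running $\A^{-1}$. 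Composing $\Psi_n$ along a word $g=y_1\cdots y_k$ over $Q\cup Q^{-1}$ shows that $(y_1\cdots y_k;w)\mapsto(g|_w;\,g(w))$ is a bijection of $(Q\cup Q^{-1})^k\times X^n$ onto itself. As a byproduct, since $y\mapsto y|_w$ is a sign-preserving bijection of $Q\cup Q^{-1}$, a freely reduced word over $Q\cup Q^{-1}$ has, at every vertex $w$, a freely reduced section $g|_w$ of the same length.

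With this in hand the argument splits. If $G$ happens to be free on $Q$ — as for the Aleshin automaton — we are done immediately: a nonempty freely reduced word is nonidentity in $G$, so $g|_x=1$ in $G$ together with the reducedness byproduct forces $g$ to be the empty word, hence $g=1$. In general $G$ can have relations (the Bellaterra automaton gives $(\Z/2\Z)*(\Z/2\Z)*(\Z/2\Z)$), so one must rule out that a nonempty freely reduced word representing the identity in $G$ occurs as $g|_x$ for some $g$ fixing $x$ and nonidentity in $G$. Here I would pass to the complete square complex $\mathcal S_{\A}$ of $\A$ (one vertex; horizontal edges $Q$, vertical edges $X$; one square $qx=\lambda(q,x)\,\pi(q,x)$ for each pair): bireversibility is precisely completeness, the universal cover is the product of trees $T_Q\times T_X$, the group $\pi_1(\mathcal S_{\A})$ acts freely on it, and the horizontal subgroup $\langle Q\rangle$ acts on the factor $T_X$ exactly as $G$ acts on $X^*$. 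A nonidentity $g\in\triv_G(x)$ fixes pointwise the sub-half-tree of $T_X$ below $x$; lifting $g$ to $\langle Q\rangle$ and adjusting the lift by an element of the kernel of $\langle Q\rangle\twoheadrightarrow G$, one wants the horizontal component of the lift to be elliptic, so that the lift fixes a vertex of $T_Q\times T_X$ and is therefore trivial, forcing $g=1$. Alternatively one can try to remain purely combinatorial: run a minimality argument on $|g|$, using the bijectivity of the composed transport map and the fact that $g$ \emph{fixing} $x$ keeps the $X$-component of the transport constant, so a shortest offending word would transport to a strictly shorter one.

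The step I expect to be the main obstacle is exactly this last one: showing that bireversibility, rather than mere invertibility plus reversibility of $\A$, is what prevents a nonidentity element fixing a vertex from having a trivial section there once $G$ carries relations. In the square-complex approach the delicate point is controlling the lift to $\pi_1(\mathcal S_{\A})$: the kernel of the map from the horizontal subgroup onto $G$ consists of purely horizontal hyperbolic elements, and one must show that every coset of it contains a lift with elliptic horizontal part. In the combinatorial approach the delicate point is that a transported word need no longer be freely reduced once a relation of $G$ is invoked, so one has to interleave the transport bijection with a careful cancellation analysis. Everything else — the two reductions, the construction and bijectivity of $\Psi_n$ and of the composed transport map, and the free case — is routine once bireversibility has been unwound.
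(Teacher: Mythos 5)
The paper does not actually prove this proposition: it is imported verbatim as Corollary~2.10 of~\cite{steinberg_vv:series_free}, so there is no in-paper argument to compare yours against. Judged on its own terms, your proposal assembles the right machinery --- the reduction to local nontriviality via Theorem~\ref{thm:freeness_equiv} and Proposition~\ref{prop:loc_triv}, and above all the transport bijection $(y,w)\mapsto(y|_w,\,y(w))$ on $(Q\cup Q^{-1})\times X^n$, whose bijectivity is exactly what bireversibility buys --- and it correctly finishes the case where $G$ is free on $Q$. But it stops short of a proof: by your own admission the case where $G$ carries relations is left open, and that case includes the Bellaterra automaton~[846], i.e.\ precisely one of the two groups for which this paper invokes the proposition. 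Neither of your two proposed escape routes is carried out, and the square-complex one is suspect as stated: a nontrivial element of the horizontal subgroup of $\pi_1(\mathcal S_{\A})$ acts on the factor $T_Q$ as a nontrivial element of a free group on its Cayley tree, hence is hyperbolic there, so no adjustment of the lift can make its ``horizontal part elliptic'' unless the word was already freely trivial --- which returns you to the free case. The combinatorial route founders, as you note, on the loss of free reducedness after transport.

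The gap closes with one observation for which you already have every ingredient, and it makes both detours (and the reduction to single letters, and the reduced-word bookkeeping) unnecessary. Fix $k$ and $n$, let $W_k=(Q\cup Q^{-1})^k$ be \emph{all} words of length $k$, reduced or not, and let $\Psi\colon W_k\times X^n\to W_k\times X^n$, $(v,w)\mapsto(v|_w,\,v(w))$, be your composed transport map; it is a bijection of a finite set. Let $N_k\subseteq W_k$ be the set of words representing the identity of $\Aut(T)$. Since sections of the identity are the identity and the identity fixes every vertex, $\Psi(N_k\times X^n)\subseteq N_k\times X^n$; injectivity plus finiteness upgrade this inclusion to equality, whence $\Psi^{-1}(N_k\times X^n)=N_k\times X^n$. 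Now if $g\neq 1$ is represented by some $v\in W_k$, fixes $w\in X^n$, and has $g|_w=1$, then $\Psi(v,w)=(v|_w,w)\in N_k\times X^n$, so $(v,w)\in N_k\times X^n$, so $v\in N_k$ and $g=1$ --- a contradiction. This is an entirely elementary pigeonhole argument riding on the bijection you constructed, and it is essentially how the cited Corollary~2.10 of~\cite{steinberg_vv:series_free} is obtained.
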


In the end of this section we would like to bring the attention to the connection between groups acting essentially freely on $\partial T_2$ and other classes of groups. Namely, we prove that each hereditary just-infinite self-similar group acts essentially freely on $\partial T_d$, and that the essentially free groups could be used to create new examples of scale-invariant groups. We start from definitions.

\begin{definition}
A group $G$ is called \emph{just-infinite} if it is infinite, but each proper quotient of $G$ is finite.
\end{definition}

\begin{definition}[\cite{grigorch:jibranch}]
A residually-finite group $G$ is called \emph{hereditary just-infinite} if each finite index subgroup of $G$ is just-infinite.
\end{definition}

Note that both hereditary just-infinite groups and branch groups play a crucial role in the trichotomy classifying finitely generated just-infinite groups~\cite{grigorch:jibranch}. According to this trichotomy any finitely generated
just-infinite group is
either branch  just-infinite group,  or hereditary  just  infinite group, or near simple group  (i.e.  a just-infinite group containing a subgroup of finite index that is a direct product of finitely many copies of a simple group).

\begin{proposition}
\label{prop:heredji}
Each hereditary just-infinite self-similar group $G$ generated by (possibly infinite) automaton over alphabet $X$ that acts transitively on the first level of $X^*$, acts essentially freely on $\partial T_{|X|}$.
\end{proposition}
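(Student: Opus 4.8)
The plan is to convert hereditary just-infiniteness into the vanishing of all rigid stabilizers, which is what drives essential freeness. We may assume $|X|=d\ge 2$, since for $|X|=1$ there is no infinite group acting on $T_{|X|}$. Set $R=\prod_{x\in X}\Rist_G(x)$, the rigid stabilizer of the first level. Since every element of $G$ permutes the subtrees hanging at the first level, conjugation by $g\in G$ carries $\Rist_G(x)$ onto $\Rist_G(g(x))$; hence $R\trianglelefteq G$.

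The key step is that $R=1$. Suppose not. As $G$ acts transitively on $X$, the subgroups $\Rist_G(x)$, $x\in X$, are pairwise conjugate in $G$, hence all nontrivial, so $R$ is a direct product of $d\ge 2$ isomorphic nontrivial groups. Such a group is never just-infinite: the factor $\Rist_G(x_0)$ is a proper nontrivial normal subgroup of $R$ with quotient $\prod_{x\ne x_0}\Rist_G(x)\ne 1$, and if this quotient is infinite then $R$ is not just-infinite, while if it is finite then all factors, hence $R$, are finite and $R$ is again not just-infinite. But $R$ is a nontrivial normal subgroup of the just-infinite group $G$, hence of finite index, and a finite-index subgroup of a hereditary just-infinite group is just-infinite --- a contradiction. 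Therefore $R=1$. By self-similarity this propagates downward: for a vertex $v=xw$ with $x\in X$, any element of $\Rist_G(v)$ acts nontrivially only on vertices having $x$ as a prefix, hence lies in $\Rist_G(x)\le R=1$; thus $\Rist_G(v)=1$ for every vertex $v$, and the rigid stabilizer of every level of $T_d$ is trivial.

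It remains to deduce essential freeness from the vanishing of all rigid stabilizers. When $|X|=2$ this is exactly Proposition~\ref{prop:rist}, and whenever the generating automaton is finite it follows from Theorem~\ref{thm:freeness_equiv} and Proposition~\ref{prop:loc_triv}, since by $R=1$ and self-similarity a nontrivial trivializer would force a nontrivial element into some $\Rist_G(v)$. For a general, possibly infinite automaton the plan is to argue directly: were the action not essentially free then, $G$ being countable and $Y_-=\bigcup_{g\ne 1}\Fix(g)$, some $g\ne 1$ would satisfy $\mu(\Fix(g))>0$; choosing a density point $\xi=x_1x_2\cdots$ of $\Fix(g)$ and setting $g_n=g|_{x_1\cdots x_n}\in G$ yields $\mu(\Fix(g_n))=\mu(\Fix(g)\cap c_{x_1\cdots x_n})/\mu(c_{x_1\cdots x_n})\to 1$. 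Since an automorphism of $\partial T_d$ whose fixed set has measure exceeding $1-1/d$ must fix the first level pointwise, and fixed-set measures of elements fixing the first level split as averages over first-level sections, iterating this observation on those $g_n$ with $\mu(\Fix(g_n))$ close enough to $1$ produces nontrivial elements of $G$ lying arbitrarily deep in the level stabilizers with support of arbitrarily small measure; concentrating such a nearly trivial deep element into a single subtree gives a nontrivial element of some $\Rist_G(v)$, contradicting the previous step. The main obstacle is precisely this concentration: a nearly trivial deep section may be nontrivial on several subtrees at once, and extracting from it an honest rigid-stabilizer element is genuinely group-theoretic --- equivalently, one must rule out a self-similar, level-one-transitive group with all rigid stabilizers trivial yet possessing a nontrivial trivializer, which turns out never to be just-infinite, but showing this seems to require residual finiteness of $G$ for approximation together with transitivity on $X$ and self-similarity to localize supports. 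By contrast, the reduction of everything to $R=1$ is straightforward.
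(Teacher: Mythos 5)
Your first step is correct and rather elegant: $R=\prod_{x\in X}\Rist_G(x)$ is normal, hence of finite index by just-infiniteness, hence just-infinite by heredity, yet it is a direct product of $d\ge 2$ nontrivial pairwise conjugate factors and so cannot be just-infinite; therefore $R=1$ and, by self-similarity, every $\Rist_G(v)$ is trivial. The genuine gap is the second half. Triviality of all rigid stabilizers is \emph{not} known to imply essential freeness except on the binary tree: Proposition~\ref{prop:rist} is special to $T_2$ (and to finite automata), and the paper explicitly remarks after it that for a non-binary tree local nontriviality cannot be formulated in terms of rigid stabilizers. Concretely, a nontrivial $g$ fixing a first-level vertex $v$ with $g|_v=1$ may act nontrivially on several of the other $d-1\ge 2$ subtrees at once, or even permute them, so it need not produce any nontrivial element of any $\Rist_G(w)$; thus ``$R=1$ forces trivial trivializers'' is false as stated. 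Your closing density-point/concentration sketch is precisely an attempt to repair this, and you concede yourself that the key localization step is not established. As it stands the argument is complete only for $|X|=2$ with a finite generating automaton, whereas the proposition is claimed for arbitrary alphabets and possibly infinite automata.

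The paper's proof targets the trivializer directly rather than the rigid stabilizer, which is why it closes. From a nontrivial $g$ with $g(v)=v$ and $g|_v=1$ at a first-level vertex $v$, it considers the section homomorphism $\psi\colon\St_G(1)\to M_v:=\St_G(v)|_{T_v}$. Transitivity on the first level makes all the $M_w$ conjugate, and since $\St_G(1)$ embeds into $\prod_w M_w$ and has finite index in the infinite group $G$, the group $M_v$ must be infinite; hence $\ker\psi$ has infinite index in $\St_G(1)$, while the trivializer element makes $\ker\psi$ nontrivial. This contradicts just-infiniteness of the finite-index subgroup $\St_G(1)$. Note that this argument uses hereditary just-infiniteness on $\St_G(1)$ rather than on a rigid stabilizer, and it never needs to ``concentrate'' the support of $g$ into a single subtree --- which is exactly the step your route cannot supply.
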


Note that in the case of binary tree ($|X|=2$) the condition of transitivity of $G$ on the first level of $X^*$ is satisfied automatically.

\begin{proof}
Suppose the action on $\partial T_{|X|}$ is not essentially free. Then by Proposition~\ref{prop:loc_triv} there is a nonidentity element $g$ and a vertex $v\in X^*$ fixed by $g$ with $g|_v=1$. By self-similarity, we may assume that $v$ is a vertex of the first level.

For each $w\in X^*$ let $T_w$ denote the tree hanging down from the vertex $w$ and let $M_w=\St_G(w)|_{T_w}$ be the group consisting of all sections of elements of $\St_G(w)$ at vertex $w$.
Since $G$ acts transitively on the first level of $X^*$, all groups $M_w$ for $w\in X^1$ are conjugate. In particular, they are either all finite or all infinite. On the other hand, if all of $M_w$, $w\in X^1$ are finite, then $\St_G(1)$ must be finite as it embeds into $\prod_{w\in X^1}M_w$ via
\[\St_G(1)\ni g\mapsto (g|_1,g|_2,\ldots,g|_{|X|})\in\prod_{w\in X^1}M_w.\]
Since $G$ is infinite and $\St_G(1)$ has finite index in $G$, we conclude that $M_v$ is infinite. Now consider an epimorphism
\[\psi\colon\St_G(1)\to M_v\]
defined by $\psi(g)=g|_v$. Since $M_v$ is infinite and $\psi$ is onto, the kernel of $\psi$ has an infinite index in $\St_G(1)$, contradicting to the fact that $\St_G(1)$ is just-infinite, which must be the case as $G$ is hereditary just-infinite and $\St_G(1)$ has a finite index in $G$.
\end{proof}

We note, however, that there is currently no known examples of hereditary just-infinite self-similar groups. In view of this the above proposition tells that we have to look for such examples in the class of groups that act essentially freely on the boundary of the tree (see Question~\ref{que:heredji} in Section~\ref{sec:concluding}).

Recall, that a group $G$ is called \emph{B-scale-invariant} if there is a sequence of finite index subgroups of $G$ that are all isomorphic to $G$ and whose intersection is a finite group. This class was introduced by Benjamini (this is why we add ``B'' in front of ``scale-invariant'') who conjectured that every such group is virtually nilpotent. A counterexample based on the lamplighter group was provided implicitly in~\cite{grigorch_z:lamplighter} (the paper was printed before the conjecture was stated) and explicitly in~\cite{nekrashevych_p:scale_invariant}, where many other examples where produced. We call a group \emph{scale-invariant} if there is a sequence of finite index subgroups of $G$ that are all isomorphic to $G$ and whose intersection is trivial. Scale-invariant groups may be interesting for problems related to random walks, spectral theory of groups and graphs, statistical physics and fractal geometry, so the question of finding essentially new examples of scale invariant groups is relevant (see Question~\ref{que:scaleinvar} at the end of article).

\begin{proposition}
\label{prop:stab_finite_index}
A self-similar self-replicating group acting essentially freely on $\partial T(X)$ is scale invariant.
\end{proposition}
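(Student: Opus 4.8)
The plan is to produce the required sequence of subgroups explicitly from the vertex stabilizers of the action. I would take the family $\{\St_G(v)\colon v\in X^*\}$, where $X$ is the (finite) alphabet, and verify three things: that each member has finite index in $G$, that each is isomorphic to $G$, and that the whole family has trivial intersection. Finite index is automatic, since $G$ acts by automorphisms of the rooted tree and hence preserves levels: the orbit of a vertex $v\in X^n$ is contained in the finite set $X^n$, so that $[G:\St_G(v)]=|\Orb_G(v)|\le|X|^n<\infty$.

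For the isomorphism I would analyze the section map $\phi_v\colon\St_G(v)\to G$, $g\mapsto g|_v$. It is a homomorphism on the stabilizer because, by~\eqref{eqn_sections} together with $g(v)=v$, we have $(gh)|_v=g|_v\cdot h|_{g(v)}=g|_v\cdot h|_v$ for $g,h\in\St_G(v)$; it is surjective precisely because $G$ is self-replicating; and its kernel is $\{g\in\St_G(v)\colon g|_v=1\}=\triv_G(v)$. This is the one place the hypothesis of essential freeness is used: a nontrivial element of $\triv_G(v)$ fixes every ray through $v$, hence fixes the cylinder $c_v$, so $\mu(Y_-)\ge\mu(c_v)=|X|^{-n}>0$, contradicting $\mu(Y_-)=0$. (Equivalently, this is the local nontriviality observation made just before the definition of the trivializer, applied at the vertex $v$.) Hence $\triv_G(v)=1$, so $\phi_v$ is an isomorphism and $\St_G(v)\cong G$.

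It then remains to check that $\bigcap_{v\in X^*}\St_G(v)=\{1\}$. An element lying in every $\St_G(v)$ fixes every vertex of $X^*$, and since $G$ is a group of automorphisms of $X^*$ acting faithfully, this forces that element to be trivial. As $X^*$ is countable, I would enumerate it as $v_1,v_2,\dots$ and take the sequence $(\St_G(v_i))_{i\ge1}$: a sequence of finite index subgroups of $G$, each isomorphic to $G$, whose intersection is trivial. That is exactly the definition of scale invariance, which finishes the argument.

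I do not anticipate a real obstacle here, as the whole argument is elementary manipulation of sections; the only point that requires care is the identification $\ker\phi_v=\triv_G(v)$, which is what allows essential freeness to promote the self-replicating surjection $\phi_v$ to an isomorphism. Without essential freeness one obtains only that every vertex stabilizer surjects onto $G$, not that it is a copy of $G$, and the conclusion would fail.
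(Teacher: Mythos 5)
Your argument for finite index and for $\St_G(v)\cong G$ is exactly the paper's: orbit--stabilizer bounds the index by $|X|^{|v|}$, self-replication makes $\phi_v$ surjective, and essential freeness kills the kernel because a nontrivial element of $\triv_G(v)$ would fix the positive-measure cylinder $c_v$ (this is the appeal to Proposition~\ref{prop:loc_triv}). That part is complete and correct.

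The final step is where you diverge, and it contains a gap. Scale-invariance in the sense of Benjamini and of~\cite{nekrashevych_p:scale_invariant} --- and the sense the paper's own proof realizes, explicitly calling its sequence ``nested'' --- requires a \emph{decreasing chain} $G\ge G_1\ge G_2\ge\cdots$ of finite-index subgroups isomorphic to $G$ with trivial intersection. Your sequence $(\St_G(v_i))_{i\ge1}$ over an arbitrary enumeration of $X^*$ is not nested, and one cannot repair it by intersecting ($\St_G(v_1)\cap\cdots\cap\St_G(v_n)$ is a level-type stabilizer, not a vertex stabilizer, and there is no reason for it to be isomorphic to $G$). Note also that your trivial-intersection argument uses only faithfulness of the action, which is a sign that it cannot be capturing the right statement: one needs essential freeness a second time here. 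The correct route, which is the paper's, is to observe that essential freeness makes the set of rays with trivial stabilizer of full measure, hence nonempty; fixing one such ray $\omega$ and letting $w_n$ be its length-$n$ prefixes, the stabilizers $\St_G(w_n)$ form a genuinely nested chain whose intersection is $\St_G(\omega)=\{1\}$. (Simply following the ray $000\cdots$ would not do, since that particular ray may be a singular point with nontrivial stabilizer --- which is exactly why the existence of a trivially-stabilized ray, and not mere faithfulness, is the needed input.)
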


\begin{proof}
Let $G$ be as described in the statement. Then for each vertex $u\in X^*$ consider the stabilizer $\St_G(u)$ of $u$ in $G$.
First of all, the index of $\St_G(u)$ cannot exceed $|X|^{|u|}$ (where $|\cdot|$ denotes the cardinality of the argument) as vertex $u$ cannot be moved by $G$ outside its level, which has $|X|^{|u|}$ vertices. Since $G$ is self-replicating, the canonical homomorphism $\phi_u\colon\St_G(u)\to G$ defined by $\phi_u(g)=g|_u$ is surjective. On the other hand, the kernel of this homomorphism is trivial since otherwise we would obtain a nonidentity element in the trivializer of $u$ in $G$ contradicting to the essential freeness of the action of $G$ on $\partial T(X)$ by Proposition~\ref{prop:loc_triv}. Therefore, $\St_G(u)$ is isomorphic to $G$.

Since the action of $G$ on $\partial T(X)$ is essentially free, the set of points in $\partial T_2$ that have trivial stabilizers in $G$ has full measure. Let $\omega\in\partial T(X)$ be a point in this set, so that $\St_G(\omega)=\{1\}$. Denote by $w_n$ be the prefix of $\omega$ of length $n$. Then by the above argument the sequence $\St_G(w_n)$ is a nested sequence of finite index subgroups of $G$ that are all isomorphic to $G$ and whose intersection coincides with $\St_G(\omega)$, which is trivial.
\end{proof}

The previous corollary gives a potential way to construct essentially new examples of scale-invariant groups and is a partial motivation for this paper (see Question~\ref{que:scaleinvar} in Section~\ref{sec:concluding}).

\section{Proof of the main theorem.}
\label{sec:proof}
The proof of the main theorem (Theorem~\ref{thm:main}) is subdivided into 5 subsections. All except two automata in the class under consideration generate either groups that act not essentially freely on $\partial T_2$, or groups that have been studied before in the literature. In the first case the problem reduces to finding a nonidentity element in the rigid stabilizer of the group, while in the second case there is no need to reconstruct the structure of the group from scratch. So in both cases the analysis of the group is quite short. First we filter automata that generate groups acting not essentially freely using Mikhailova systems method and brute force methods in Subsection~\ref{ssec:reduction}. Then we treat manually remaining groups whose structure has already been described (in~\cite{bondarenko_gkmnss:full_clas32_short}) in Subsection~\ref{ssec:manual}. The remaining two automata ([2193] and [2372]) generate the groups that have not been studied before and little was known about them. We completely describe the structure of these groups and prove that they act essentially freely on $\partial T_2$ in Subsections~\ref{ssec:2193} and~\ref{ssec:2372} respectively.

Our systematic search for groups that act essentially freely on $\partial T_2$ heavily uses results of~\cite{bondarenko_gkmnss:full_clas32_short}, in conjunction with computations performed using \verb"AutomGrp" package~\cite{muntyan_s:automgrp} developed by Y.~Muntyan and the second author for \verb"GAP" system~\cite{GAP4}. We also note that because of a large number of groups studied sometimes we will use the same names for elements of different groups. In other words, all names of variables and constants are to be considered ``local'' and defined for each group individually.\vspace{3mm}

\subsection{Reduction using Mikhailova system and brute force methods.}
\label{ssec:reduction}

We start from the list of all 194 non minimally symmetric automata (recall that this notion was introduced in Definition~\ref{def:minim_sym}:

\begin{verbatim}
[ 1, 730, 731, 734, 739, 740, 741, 743, 744, 747, 748, 749, 750, 752,
  753, 756, 766, 767, 768, 770, 771, 774, 775, 776, 777, 779, 780,
  783, 802, 803, 804, 806, 807, 810, 820, 821, 824, 838, 839, 840,
  842, 843, 846, 847, 848, 849, 851, 852, 855, 856, 857, 858, 860,
  861, 864, 865, 866, 869, 870, 874, 875, 876, 878, 879, 882, 883,
  884, 885, 887, 888, 891, 919, 920, 923, 924, 928, 929, 930, 932,
  933, 936, 937, 938, 939, 941, 942, 945, 955, 956, 957, 959, 960,
  963, 964, 965, 966, 968, 969, 972, 1090, 1091, 1094, 2190, 2193,
  2196, 2199, 2202, 2203, 2204, 2205, 2206, 2207, 2209, 2210, 2212,
  2213, 2214, 2226, 2229, 2232, 2233, 2234, 2236, 2237, 2239, 2240,
  2241, 2260, 2261, 2262, 2264, 2265, 2271, 2274, 2277, 2280, 2283,
  2284, 2285, 2286, 2287, 2293, 2294, 2295, 2307, 2313, 2320, 2322,
  2352, 2355, 2358, 2361, 2364, 2365, 2366, 2367, 2368, 2369, 2371,
  2372, 2374, 2375, 2376, 2388, 2391, 2394, 2395, 2396, 2398, 2399,
  2401, 2402, 2403, 2422, 2423, 2424, 2426, 2427, 2838, 2841, 2844,
  2847, 2850, 2851, 2852, 2853, 2854, 2860, 2861, 2862, 2874, 2880,
  2887, 2889 ]
\end{verbatim}

Firstly, we compute Mikhailova systems for all automata in the above list and filter out those automata, for which Mikhailova system produces a nonidentity element in the rigid stabilizer. The nontriviality of the elements listed below was checked by the program, but can be checked by hands as well. This allows us to reduce by 93 the number of automata that have to be checked. For each such automaton we list this element and its decomposition at the first level. Here is the list:

\renewcommand{\arraystretch}{0.1}
\begin{multicols}{2}
\noindent 741: $c^{-1}a^{-1}ba = (1, a^{-1}c^{-1}bc)$\\
744: $b^{-1}c^{-1}ba^{-1}ca = (1, a^{-1}c^{-1}ac)$\\
749: $c^{-1}ac^{-1}ba^{-1}c = (1, a^{-1}c)$\\
753: $b^{-1}aba^{-1}bc^{-1} = (1, a^{-1}bcb^{-1})$\\
776: $a^{-1}ba^{-1}c = (1, b^{-1}c)$\\
777: $c^{-1}b^{-1}a^2 = (1, a^{-1}b^{-1}ac)$\\
779: $c^{-1}ab^{-1}cba^{-1} = (1, a^{-1}bc^{-1}acb^{-1})$\\
840: $b^{-1}a^{-1}ca = (1, b^{-1}c^{-1}bc)$\\
843: $c^{-1}a^{-1}ba = (1, a^{-1}c^{-1}ac)$\\
849: $c^{-1} = (1, a^{-1})$\\
852: $c^{-1} = (1, a^{-1})$\\
856: $c^{-2}ac^{-1}bca^{-1}c = (1, a^{-1}b^{-1}cb)$\\
857: $c^{-2}ac^{-1}ac^{-1}aba^{-1}c = (1, a^{-1}c)$\\
858: $c^{-1}b^{-1}aba^{-1}b = (1, a^{-1}b^{-1}aca^{-1}b)$\\
860: $c^{-1}aba^{-1} = (1, a^{-1}bcb^{-1})$\\
861: $c^{-1}b^{-1}aba^{-1}b = (1, a^{-1}c)$\\
864: $c^{-1}b^{-1}aba^{-1}b = (1, a^{-1}b^{-1}cb)$\\
866: $c^{-1}ac^{-1}a^{-1}cba^{-1}ca^{-1}c = (1, b^{-1}c)$\\
869: $c^{-1}ac^{-1}a^{-1}cb = (1, a^{-1}b^{-1}ac)$\\
874: $c^{-1}b = (1, a^{-1}c)$\\
875: $c^{-1}ac^{-1}ac^{-1}b = (1, a^{-1}c)$\\
876: $c^{-1}b = (1, a^{-1}c)$\\
878: $c^{-1}b = (1, a^{-1}c)$\\
879: $c^{-1}b = (1, a^{-1}c)$\\
882: $c^{-1}b = (1, a^{-1}c)$\\
883: $c^{-1}b^{-1}ac^{-1}bca^{-1}c = (1, a^{-1}c^{-1}ab^{-1}cb)$\\
885: $c^{-1}b^{-1}aba^{-1}c = (1, a^{-1}c^{-1}ac)$\\
887: $c^{-1}ab^{-1}a^{-1}cb = (1, a^{-1}bc^{-1}b^{-1}ac)$\\
888: $c^{-1}ab^{-1}a = (1, a^{-1}b)$\\
920: $b^{-1}ab^{-1}cba^{-1}ba^{-1}ba^{-1} = (1, b^{-1}c)$\\
923: $b^{-1}ab^{-1}c^{-1}ba^{-1}b^2 = (1, a^{-1}c^{-1}ab)$\\
929: $c^{-1}a^{-1}ca^{-1}c = (1, a^{-1}c)$\\
933: $c^{-1}a^2 = (1, a^{-1}c)$\\
937: $c^{-1}b = (1, a^{-1}b)$\\
938: $c^{-1}b = (1, a^{-1}b)$\\
939: $c^{-1}bc^{-1}ac^{-1}a = (1, a^{-1}b)$\\
941: $c^{-1}b = (1, a^{-1}b)$\\
942: $aba^{-1}ba^{-2}c^{-1}b = (1, a^{-1}b)$\\
945: $c^{-1}b = (1, a^{-1}b)$\\
955: $c^{-2}ac^{-1}bca^{-1}c = (1, a^{-1}c^{-1}bc)$\\
956: $c^{-1}b^{-1}aba^{-1}b = (1, a^{-1}c^{-1}aba^{-1}c)$\\
957: $c^{-2}aba^{-1}c^{-1}ac^{-1}ac = (1, a^{-1}b)$\\
959: $c^{-1}b^{-1}aba^{-1}b = (1, a^{-1}c^{-1}bc)$\\
960: $aba^{-1}ba^{-2}c^{-1}aba^{-1} = (1, a^{-1}b)$\\
963: $c^{-1}aba^{-1} = (1, a^{-1}cbc^{-1})$\\
965: $c^{-1}b = (1, a^{-1}c)$\\
969: $c^{-1}b = (1, a^{-1}c)$\\
2199: $b^{-1}a = (1, b^{-1}c)$\\
2202: $cb^{-1}c^{-1}b = (1, ab^{-1}a^{-1}b)$\\
2203: $c^{-2}ab = (1, a^{-2}cb)$\\
2204: $c^{-1}ab^{-1}c = (1, a^{-1}c)$\\
2207: $a^{-1}b = (1, b^{-1}c)$\\
2209: $c^{-1}aca^{-1} = (1, a^{-1}bab^{-1})$\\
2210: $c^{-1}b^{-1}cb = (1, a^{-1}c^{-1}ac)$\\
2213: $c^{-1}b^{-1}cb = (1, a^{-1}c^{-1}ac)$\\
2234: $c^{-1}b^{-1}ac^{-1}a^2 = (1, a^{-1}c^{-1}b^2)$\\
2236: $a^{-1}b = (1, a^{-1}c)$\\
2239: $ca^{-2}cba^{-1} = (1, c^{-1}abc^{-1})$\\
2261: $c^{-1}a^{-1}ca = (1, a^{-1}b^{-1}ab)$\\
2271: $b^{-1}a = (1, a^{-1}c)$\\
2274: $c^{-3}bc^2b^{-3}c^3 = (1, a^{-2}b^2)$\\
2280: $b^{-2}a^2b^{-1}a^{-1}b^2 = (1, b^{-1}c)$\\
2283: $c^{-2}bac^{-2}bcb^{-1}cb^{-2}c^2 = (1, a^{-1}c)$\\
2284: $c^{-1}bca^{-1} = (1, bc^{-1})$\\
2285: $c^{-1}ac^{-1}b = (1, a^{-1}c)$\\
2287: $c^{-1}b^{-1}c^2a^{-1}c = (1, a^{-1}c^{-1}b^2)$\\
2293: $b^{-1}c^2a^{-1} = (1, c^{-1}b^2c^{-1})$\\
2295: $c^{-1}ab^{-1}c = (1, a^{-1}c)$\\
2307: $c^{-1}bc^{-1}a = (1, a^{-1}c)$\\
2322: $ba^{-1} = (1, bc^{-1})$\\
2355: $b^{-1}a^{-1}cb^{-1}cb = (1, a^{-1}b^{-1}ca)$\\
2361: $b^{-1}a = (1, b^{-1}c)$\\
2364: $c^{-1}ac^{-1}b = (1, a^{-1}bc^{-1}b)$\\
2365: $ac^{-1}ac^{-1}b^{-1}c^2a^{-1} = (1, b^{-1}c)$\\
2366: $ba^{-1} = (1, ac^{-1})$\\
2367: $aca^{-1}cb^{-1}a^{-1} = (1, cab^{-1}c^{-1})$\\
2369: $a^{-1}b = (1, b^{-1}c)$\\
2371: $ac^{-2}b = (1, bc^{-1})$\\
2375: $c^{-1}b^{-1}ca = (1, a^{-1}b)$\\
2395: $b^{-1}ca^{-1}c^{-1}b^2 = (1, a^{-2}bc)$\\
2396: $c^{-1}bc^{-1}a = (1, a^{-1}bc^{-1}b)$\\
2398: $a^{-1}b = (1, a^{-1}c)$\\
2399: $a^{-1}b = (1, b^{-1}c)$\\
2401: $c^{-1}bca^{-1} = (1, a^{-1}b)$\\
2402: $c^{-2}ba = (1, a^{-2}b^2)$\\
2403: $ba^{-1} = (1, bc^{-1})$\\
2423: $c^{-1}bc^{-1}a = (1, a^{-1}b)$\\
2427: $ab^{-1} = (1, bc^{-1})$\\
2841: $b^{-1}a^{-1}ba^{-1} = (1, a^{-1}b^{-1}ab^{-1})$\\
2847: $b^{-1}a = (1, b^{-1})$\\
2850: $b^{-1}a^2b^{-1}ab = (1, a^{-1}b^2)$\\
2851: $a^{-3}b = (1, a^{-2}b)$\\
2852: $ab^{-1} = (1, a^{-1})$\\
\end{multicols}

For the remaining 101 automata we applied a brute force in an attempt to find nonidentity elements in the rigid stabilizer of the first level up to length 5 using the function \verb"FindGroupElement" of \verb"AutomGrp" package. This allows us to eliminate the following automata.

\begin{center}
\begin{tabular}{ll}
739: $bc=(ba, 1)$&                        2205: $(ab)^2=(1, (cb)^2)$\\
740: $bc^{-1}=(ba^{-1}, 1)$&              2206: $ab=(1, ac)$\\
743: $bc=(ba, 1)$&                        2212: $abc^{-2}=(1, c^2a^{-2})$\\
747: $bc=(ba, 1)$&                        2214: $ab=(ca, 1)$\\
748: $bc=(ca, )$&                         2229: $ab=(cb, 1)$\\
750: $bc^{-1}=(ca^{-1}, 1)$&              2233: $bcb^{-1}c=(1, bab^{-1}a)$\\
752: $bc=(ca, 1)$&                        2237: $ab^{-1}=(bc^{-1}, 1)$\\
756: $bc=(ca, 1)$&                        2241: $ab=(cb, 1)$\\
775: $bcbc=(1, baba)$&                    2262: $ab^{-1}=(1, ac^{-1})$\\
780: $(bc^{-1})^2=((ca^{-1})^2, 1)$&      2265: $ab^{-1}=(1, bc^{-1})$\\
783: $(bc)^2=(1, (ba)^2)$&                2286: $ab^{-1}ab^{-1}=(1, ca^{-1}ca^{-1})$\\
838: $abac=((ab)^2, 1)$&                  2352: $ab^{-1}=(ca^{-1}, 1)$\\
842: $abac=(1, (ba)^2)$&                  2368: $ab=(1, ac)$\\
847: $c=(1, a)$&                          2376: $ab=(ca, 1)$\\
848: $c=(1, a)$&                          2391: $ab=(cb, 1)$\\
851: $c=(1, a)$&                          2424: $ab^{-1}=(1, ac^{-1})$\\
855: $c=(1, a)$&                          2838: $ab^{-1}=(a^{-1}, 1)$\\
964: $bc=(1, ca)$&                        2853: $(ab)^2=(1, b^2)$\\
966: $bc^{-1}=(1, ca^{-1})$&              2854: $ab=(1, a)$\\
968: $bc=(1, ca)$&                        2860: $ab=(a^2, 1)$\\
972: $bc=(1, ca)$&                        2862: $ab=(a, 1)$\\
2190: $ab^{-1}=(ca^{-1}, 1)$&             2889: $ab=(b, 1)$
\end{tabular}
\end{center}

The above reduction leaves the following 57 candidates for automata that generate groups acting essentially freely:
\begin{verbatim}
[ 1, 730, 731, 734, 766, 767, 768, 770, 771, 774, 802, 803, 804, 806,
  807, 810, 820, 821, 824, 839, 846, 865, 870, 884, 891, 919, 924,
  928, 930, 932, 936, 1090, 1091, 1094, 2193, 2196, 2226, 2232, 2240,
  2260, 2264, 2277, 2294, 2313, 2320, 2358, 2372, 2374, 2388, 2394,
  2422, 2426, 2844, 2861, 2874, 2880, 2887 ]
\end{verbatim}
In the next three subsections, we investigate these cases separately.
\vspace{3mm}

\subsection{Investigation of easy cases.}
\label{ssec:manual}
The format of this subsection is as follows. Some automata listed in the end of previous subsection generate isomorphic groups, for which the proof of essential freeness/non-freeness is identical. We then unite such groups into one case. Other automata are treated separately. We start each case by listing the numbers of automata from the list at the the end of previous subsection treated in this case (these numbers are given in bold font). Within each case we mean by $G$ the group generated by an automaton under consideration.\vspace{3mm}

\noindent{\bf 1.} The automaton number {\bf1} generates the trivial group which by definition acts essentially freely on $\partial T_2$.\vspace{3mm}

\noindent{\bf 730,734,766,770,774,2232,2264,2844,2880.} All automata in this list generate the Klein group of order 4 isomorphic to $(\Z/2\Z)\times(\Z/2\Z)$. Straightforward check reveals that no nonidentity element of any of these groups belongs to $\Rist_G(1)$. Thus by Proposition~\ref{prop:rist} these groups act essentially freely on $\partial T_2$.\vspace{3mm}

\noindent{\bf 731,767,768,804,1091,2861,2887.} All automata in this family generate groups isomorphic to $\Z$. We will prove now that if an automaton generates $G\cong\Z$, then the action of $G$ on $\partial T_2$ is essentially free. Suppose not, then by Proposition~\ref{prop:rist} and spherical transitivity of a group (as $\Z$ is infinite self-similar group acting on the binary tree, by Lemma 3 in~\cite{bondarenko_gkmnss:full_clas32_short} its action must be spherically transitive) there must be a nonidentity element $g=(1,g|_1)$ in $\Rist_G(1)$. Since $G$ is nontrivial, by self-similarity there must be an element $h=(h|_0,h|_1)\sigma\in G$ that acts nontrivially on the first level. Conjugating $g$ by $h$ yields
\[g^h=(h_1^{-1},h_0^{-1})\sigma\cdot (1,g|_1)\cdot (h|_0,h|_1)\sigma=(g|_1^{h|_1},1).\]
Since both $g$ and $g^h$ are nonidentity elements of $G\cong\Z$, there must be $n,m\in \Z-\{0\}$ such that $g^n=(g^h)^m$, which implies
\[(1,g|_1^m)=((g|_1^{h|_1})^m,1).\]
This is a contradiction because $g|_1\in G$ has an infinite order as each nonidentity element of $G$. Thus $G$ acts essentially freely on $\partial T_2$.\vspace{3mm}

\noindent{\bf 771.} The wreath recursion for $G_{771}$ is $a=(c,1)\sigma, c=(a,a)$ and the group $G$ it generates is isomorphic to $\Z^2$ freely generated by $a$ and $c$. Each element of a stabilizer of the first level can be written as $a^{2n}c^m$ for some $n,m\in Z$. Since
\[a^{2n}c^m=(c^na^m,c^na^m),\]
the only time this element belongs to $\Rist_G(1)$ is when $n=m=0$, i.e. $a^{2n}c^m=1$. Thus, $\Rist_G(1)$ is trivial and $G$ acts essentially freely on $\partial T_2$ by Proposition~\ref{prop:rist}.\vspace{3mm}

\noindent{\bf 802,806,810,2196,2260.} All automata in this list generate an abelian group of order 8 isomorphic to $(\Z/2\Z)\times(\Z/2\Z)\times(\Z/2\Z)$. Straightforward check reveals that no nonidentity element of any of these groups belongs to $\Rist_G(1)$. Thus by Proposition~\ref{prop:rist} these groups act essentially freely on $\partial T_2$.\vspace{3mm}

\noindent{\bf 803.} The wreath recursion for $G_{803}$  is $a=(b,a)\sigma, b=(c,c), c=(a,a)$ and the group $G$ it generates is isomorphic to $\Z^2$ freely generated by $a$ and $b$ (where $c=a^{-2}b^{-1}$). Each element of a stabilizer of the first level can be written as $a^{2n}b^m$ for some $n,m\in Z$. Since
\[a^{2n}b^m=(a^nb^nc^m,a^nb^nc^m)\]
and the sections at the vertices of the first level are equal, the only time this element belongs to $\Rist_G(1)$ is when these sections are trivial, i.e. $m=n=0$ and, hence, $a^{2n}b^m=1$. Thus, $\Rist_G(1)$ is trivial and $G$ acts essentially freely on $\partial T_2$ by Proposition~\ref{prop:rist}.\vspace{3mm}

\noindent{\bf 807.} The wreath recursion for $G_{807}$  is $a=(c,b)\sigma, b=(c,c), c=(a,a)$ and the group $G$ it generates is isomorphic to $\Z^2$ freely generated by $a$ and $c$ (where $b=a^{-2}c^{-2}$). Each element of a stabilizer of the first level can be written as $a^{2n}c^m$ for some $n,m\in Z$. Since
\[a^{2n}c^m=(c^nb^na^m,c^nb^na^m)\]
and the sections at the vertices of the first level are equal, the only time this element belongs to $\Rist_G(1)$ is when these sections are trivial, i.e.  i.e. $m=n=0$ and, hence, $a^{2n}b^m=1$. Thus, $\Rist_G(1)$ is trivial and $G$ acts essentially freely on $\partial T_2$ by Proposition~\ref{prop:rist}.\vspace{3mm}

\noindent{\bf 820,824,865,919,928,932,936,2226,2358,2394,2422,2874.} All automata in this family generate the infinite dihedral group $D_\infty$. We will prove that if an automaton generates $G\cong D_\infty$, then the action of $G$ on $\partial T_2$ is essentially free by the same method we used for automata generating $\Z$. Suppose not, then by Proposition~\ref{prop:rist} and spherical transitivity of a group (again, as $D_\infty$ is infinite self-similar group acting on the binary tree its action must be spherically transitive) there must be a nonidentity element $g=(1,g|_1)$ in $\Rist_G(1)$. Since $G$ is nontrivial, by self-similarity there must be an element $h=(h|_0,h|_1)\sigma\in G$ that acts nontrivially on the first level. Conjugating $g$ by $h$ yields
\[g^h=(h_1^{-1},h_0^{-1})\sigma\cdot (1,g|_1)\cdot (h|_0,h|_1)\sigma=(g|_1^{h|_1},1).\]
Both $g$ and $g^h$ are different nonidentity elements of $G\cong D_\infty$ that commute. All centralizers of nonidentity elements in $D_\infty$ are cyclic either of order 2 or infinite.
Since we have already two nonidentity elements in the $C_G(g)$, this subgroup has to be isomorphic to $\Z$. Hence, there must be $n,m\in \Z-\{0\}$ such that $g^n=(g^h)^m$, which implies
\[(1,g|_1^m)=((g|_1^{h|_1})^m,1).\]
This is a contradiction because $g|_1\in G$ has an infinite order as $g$ has an infinite order. Thus $G$ acts essentially freely on $\partial T_2$.
\vspace{3mm}

\noindent{\bf 821.} The group $G_{821}$ generated by this automaton is isomorphic to the lamplighter group $\LL\cong (\Z/2\Z)\wr\Z$ (see~\cite{gns00:automata}) and has the following presentation:
\begin{equation}
\label{eqn:lamp}
G\cong\langle a,b\ |\ [(b^{-1}a),(b^{-1}a)^{b^i}]=(b^{-1}a)^2=1,\ i\geq1\rangle,
\end{equation}
that can be obtained from the standard presentation $\langle x,y\ \mid\ [x,x^{y^i}]=x^2=1, i\geq 1\rangle$ of $\LL$ by Tietze transformations.

The Mikhailova system for this group is
\[\begin{array}{lcl}
b^{-1}aba^{-1}b&=&(a, b)\\
b&=&(b, a).\\
\end{array}\]

Therefore, by Proposition~\ref{prop:ess_free_aut} it is enough to prove that the map $\phi\colon F_2\to F_2$ defined by
\[\begin{array}{l}
\phi(a)=b,\\
\phi(b)=a,\\
\end{array}\]
induces an automorphism of $\LL$.

To prove that the relators in presentation~\eqref{eqn:lamp} are mapped by $\phi$ to the identity element we first show by induction that
\[(b^{-1}a)^{b^i}=(b^{-1}a)^{a^i}\]
for all $i\ge0$. For $i=0$ there is nothing to prove. The induction step is proved as follows:
\[(b^{-1}a)^{b^{i+1}}=\left((b^{-1}a)^{b^i}\right)^b=\left((b^{-1}a)^{a^i}\right)^b=\left((b^{-1}a)^{a^{i+1}}\right)^{a^{-1}b}=(b^{-1}a)^{a^{i+1}}.\]

Therefore, for the relators in presentation~\eqref{eqn:lamp} we have:
\[\begin{array}{l}
\phi([(b^{-1}a),(b^{-1}a)^{b^i}])=[(a^{-1}b),(a^{-1}b)^{a^i}]=[(a^{-1}b),(a^{-1}b)^{b^i}]=1,\\
\phi((b^{-1}a)^2)=(a^{-1}b)^2=1.
\end{array}\]

Thus $\phi$ induces a surjective endomorphism of $G$. Since $\LL$ is residually finite, it has a Hopf property, so $\phi$ must be an isomorphism.
\vspace{3mm}

We will use the argument above using the Hopf property several times in this subsection. For easy reference we will state it in the form of lemma.

\begin{lemma}
\label{lem:hopf}
If $\phi\colon G\to G$ is a surjective endomorphism of a self-similar group $G$, then $phi$ is an isomorphism.
\end{lemma}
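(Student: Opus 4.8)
The statement to prove is Lemma~\ref{lem:hopf}: any surjective endomorphism of a self-similar group is an isomorphism.

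\medskip

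The plan is to reduce this immediately to Malcev's theorem on the residual finiteness of Hopfian groups, invoking the general fact recorded earlier in the excerpt that every automaton group is residually finite and hence Hopfian. The only subtlety is that the lemma as stated speaks of a \emph{self-similar} group, whereas the Hopf property was quoted for \emph{automaton} (i.e. finitely generated self-similar) groups. So the first step I would take is to observe that a self-similar group need not be finitely generated in general, but in every application in this paper (and, arguably, in the intended reading of the lemma) $G$ is a finitely generated automaton group; I would therefore either add the standing hypothesis that $G$ is an automaton group, or note that residual finiteness --- and therefore the Hopf property for \emph{finitely generated} subgroups --- still holds for arbitrary self-similar groups since they embed into $\Aut(T_d)$, which is residually finite, and a finitely generated residually finite group is Hopfian by Malcev~\cite{malcev:hopfian40}.

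\medskip

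Concretely, the proof is two sentences. First: a self-similar group $G$ acts faithfully on the rooted tree $T_d$, hence is a subgroup of $\Aut(T_d)$; since $\Aut(T_d)$ is residually finite (it is the inverse limit of its finite quotients $\Aut(T_d^{(n)})$ acting on the finite truncated trees), $G$ is residually finite. Second: by Malcev's theorem a finitely generated residually finite group is Hopfian, i.e. every surjective endomorphism $\phi\colon G\to G$ is injective, hence an automorphism; therefore $\phi$ is an isomorphism. I would also fix the obvious typo in the statement (``$phi$'' should read ``$\phi$'').

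\medskip

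I do not anticipate any real obstacle: the entire content is packaged in the already-cited facts (residual finiteness of automaton groups, Malcev's theorem). The only point requiring a moment's care is the finite-generation hypothesis, since residual finiteness alone does not give the Hopf property --- the free group of infinite rank is residually finite but not Hopfian. Thus the cleanest formulation is to state the lemma for finitely generated self-similar (equivalently, automaton) groups, which is exactly the setting in which it is used throughout Section~\ref{sec:proof}.
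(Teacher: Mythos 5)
Your proof is correct and follows essentially the same route as the paper, which justifies the lemma by the earlier-quoted fact that automaton groups are residually finite and hence Hopfian by Malcev's theorem. Your added caveat about finite generation (needed for Malcev) is a legitimate refinement of the paper's slightly loose phrasing, but it does not change the argument, since every group in Section~\ref{sec:proof} is generated by a finite automaton.
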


\noindent{\bf 839.} The wreath recursion of this automaton is $a=(b,a)\sigma, b=(a,b), c=(b,a)$. Since $c=aba^{-1}$ we get $G=\langle a,b\rangle\cong \LL$. The proof that the action of $G$ on $\partial T_2$ is essentially free is now identical to the one for the automaton 821.\vspace{3mm}

\noindent{\bf 846.} This is an automaton called Bellaterra automaton generating a free product of three groups of order 2: $(\Z/2\Z)*(\Z/2\Z)*(\Z/2\Z)$~\cite{nekrash:self-similar,bondarenko_gkmnss:full_clas32_short}. The automaton $\A_{846}$ is bireversible, so by Proposition~\ref{prop:bireversible} the group it generates acts essentially freely on $\partial T_2$.\vspace{3mm}

\noindent{\bf 870.} This automaton generates a group isomorphic to the Baumslag-Solitar group $BS(1,3)$ (see~\cite{bondarenko_gkmnss:full_clas32_short}). It is proved (using Proposition~\ref{prop:ess_free_aut}) in Example~5.5 in~\cite{grigorch:dynamics11eng} that this group acts essentially freely on $\partial T_2$.\vspace{3mm}

\noindent{\bf 884.} The wreath recursion for $G_{884}$ is $a=(b,a)\sigma, b=(c,c), c=(b,a)$. The Mikhailova system for this group is

\[\begin{array}{lcl}
u:=c^{-1}a^2b^{-1}c^{-1}aba^{-1}c&=&(a, b)\\
c&=&(b, a)\\
b&=&(c, c)
\end{array}\]

Since $[b,c^{-1}a]=1$ in $G$, but $[a,c^{-1}b]\neq 1$ in $G$ we get that the rigid stabilizer of the first level contains a nonidentity element $[c,b^{-1}u]=([b,c^{-1}a],[a,c^{-1}b])=(1,[a,c^{-1}b])$. Thus the action on the boundary of the tree is not essentially free.\vspace{3mm}

\noindent{\bf 891.} The wreath recursion for $G_{891}$  is $a=(c,c)\sigma, b=(c,c), c=(b,a)$. It is shown in~\cite{bondarenko_gkmnss:full_clas32_short} that the group generated by this automaton is isomorphic to $\LL\rtimes(Z/2\Z)=\bigl((\Z/2\Z)\wr Z\bigr)\rtimes (\Z/2\Z)$, where $\LL\cong L:=\langle \xi=ca, \zeta=bc\rangle$, and $\Z/2\Z=\langle c\rangle$ acts on $L$ by inversion of $\xi$ and $\zeta$. It follows, that $G$ has the following presentation with respect to the generating set $\{\xi,\zeta,c\}$:
\begin{equation}
\label{eqn:lampC2}
G\cong\langle \xi,\zeta,c\ |\ \xi^c=\xi^{-1}, \zeta^c=\zeta^{-1}, c^2=(\zeta\xi)^2=[\zeta\xi,(\zeta\xi)^{\zeta^i}]=1,i\geq 1\rangle.
\end{equation}
This presentation by Tietze transformations (using expression of $\xi$ and $\zeta$ in terms of $a$, $b$ and $c$) can be converted to the following presentation:
\begin{multline*}
G\cong\langle a,b,c\ |\  c^{-1}cac=a^{-1}c^{-1}, c^{-1}bcc=c^{-1}b^{-1},\\ c^2=(bc^2a)^2=[bc^2a,(bc^2a)^{(bc)^i}]=1, i\ge1\rangle,
\end{multline*}
that simplifies to
\begin{equation}
\label{eqn:lampC2_2}
G\cong\langle a,b,c\ |\ [ba,(ba)^{(bc)^i}]=(ba)^2=a^2=b^2=c^2=1, i\ge1\rangle
\end{equation}

The Mikhailova system for this automaton is
\[\begin{array}{lcl}
b^{-1}aca^{-1}b&=&(a, b),\\
c&=&(b, a),\\
b&=&(c, c).
\end{array}\]

Therefore, by Proposition~\ref{prop:ess_free_aut} it is enough to prove that the map $\phi\colon F_3\to F_3$ defined by
\[\begin{array}{l}
\phi(a)=b,\\
\phi(b)=a,\\
\phi(c)=c
\end{array}\]
induces an automorphism of $G$.

To prove that the relators in presentation~\eqref{eqn:lampC2_2} are mapped by $\phi$ to the identity element we first show by induction that
\[(ab)^{(ac)^i}=(ab)^{(bc)^i}\]
for all $i\ge0$. For $i=0$ the statement obviously holds. The induction step is proved as follows:
\[(ab)^{(ac)^{i+1}}=\left((ab)^{(ac)^i}\right)^{ac}=\left((ab)^{(bc)^i}\right)^{ac}=\left((ab)^{(bc)^{i+1}}\right)^{(ab)^{bc}}=(ab)^{(bc)^{i+1}}.\]

Therefore, for the relators in presentation~\eqref{eqn:lamp} we have:
\[\begin{array}{l}
\phi([ba,(ba)^{(bc)^i}])=[ab,(ab)^{(ac)^i}]=[(ab),(ab)^{(bc)^i}]=1,\\
\phi((ba)^2)=(ab)^2=1,\\
\phi(a^2)=b^2=1,\quad
\phi(b^2)=a^2=1,\quad
\phi(c^2)=c^2=1.
\end{array}\]

Thus $\phi$ induces a surjective endomorphism of $G$ and by Lemma~\ref{lem:hopf} $\phi$ must be an isomorphism.\vspace{3mm}

\noindent{\bf 924.} The group generated by this automaton is isomorphic to $BS(1,3)$~\cite{bartholdi_s:bsolitar} and has the following presentation:
\[G\cong\langle a,b,c\ |\ (ac^{-1})^a(ac^{-1})^{-3}=ba^{-1}ca^{-1}=1\rangle,\]
that can be obtained from the standard presentation of $BS(1,3)$ by Tietze transformations.

The Mikhailova system for this automaton is
\[\begin{array}{lcl}
c^{-1}aca^{-1}c&=&(a, a^{-1}bcb^{-1}a),\\
c^{-1}a^2&=&(b, a^{-1}bc),\\
c&=&(c, a),\\
c^{-1}ab^{-1}a&=&(1, a^{-1}ba^{-1}c),\\
c^{-1}ac^{-1}b^{-1}ab&=&(1, a^{-1}bc^{-1}a^{-1}cb),
\end{array}\]
where $a^{-1}ba^{-1}c=a^{-1}bc^{-1}a^{-1}cb=1$ in $G$.

Therefore, by Proposition~\ref{prop:ess_free_aut} it is enough to prove that the map $\phi\colon F_3\to F_3$ defined by
\[\begin{array}{l}
\phi(a)=a^{-1}bcb^{-1}a,\\
\phi(b)=a^{-1}bc,\\
\phi(c)=a
\end{array}\]
induces an automorphism of $G$. Now we use \verb"AutomGrp" package to verify that the relators of $G$ are mapped by $\phi$ to the identity element in $G$:

\begin{verbatim}
gap> A:=a^-1*b*c*b^-1*a;
a^-1*b*c*b^-1*a
gap> B:=a^-1*b*c;
a^-1*b*c
gap> C:=a;
a
gap> IsOne((A*C^-1)^A*(A*C^-1)^-3);
true
gap> IsOne(B*A^-1*C*A^-1);
true
\end{verbatim}

Thus $\phi$ induces a surjective endomorphism of $G$ and by Lemma~\ref{lem:hopf} $\phi$ must be an isomorphism.\vspace{3mm}

\noindent{\bf 930.} The wreath recursion for $G_{930}$ is $a=(c,a)\sigma, b=(b,b), c=(c,a)$. Since the state $b$ determines the identity state, the group generated by this automaton coincides with the lamplighter group generated by automaton 821, which acts essentially freely on $\partial T_2$.\vspace{3mm}

\noindent{\bf 1090, 1094.} Both these automata generate a group of order 2, whose nonidentity element does not belong to the rigid stabilizer as it has to act nontrivially on the vertices of the first level (otherwise by self-similarity the group would contain more than 2 elements). Thus by Proposition~\ref{prop:rist} these groups act essentially freely on $\partial T_2$.\vspace{3mm}

\noindent{\bf 2240.} This is an Aleshin automaton (originally constructed in~\cite{aleshin:free}) generating a free group $F_3$ of rank 3~\cite{vorobets:aleshin}. The automaton itself is bireversible, so by Proposition~\ref{prop:bireversible} the group it generates acts essentially freely on $\partial T_2$.\vspace{3mm}

\noindent{\bf 2277.} The wreath recursion for $G_{2277}$ is $a=(c,c)\sigma, b=(a,a)\sigma, c=(b,a)$ and the group $G$ it generates is isomorphic to $\Z^2\rtimes (\Z/2\Z)$ as shown in~\cite{bondarenko_gkmnss:full_clas32_short}. More precisely, elements $x=bc$ and $y=ba$ freely generate $\Z^2$ and
\[G\cong \langle x,y\rangle\rtimes \langle b\rangle,\]
where $b$ is an element of order 2 acting nontrivially on the first level and acting on $\langle x,y\rangle$ by conjugation inverting each element.

Consider first elements in $\langle x,y\rangle$. We have the following wreath recursion for $x$ and $y$:
\[\begin{array}{lcl}
x&=&(1,y^{-1})\sigma,\\
y&=&(xy^{-1},xy^{-1}).
\end{array}
\]

Each element of a stabilizer of the first level of $\langle x, y\rangle$ can be written as $x^{2n}y^m$ for some $n,m\in Z$. Since
\[x^{2n}y^m=(x^my^{-m-n},x^my^{-m-n})\]
and the sections at the vertices of the first level are equal, the only time this element belongs to $\Rist_G(1)$ is when these sections are trivial, i.e. $x^{2n}y^m=1$.

Each element in $\St_G(1)$ which is not in $\langle x,y\rangle$ can be written as
\[x^{2n+1}y^mb=(x^my^{-m-n}a,x^my^{-m-n-1}a).\]
Both of the sections of the latter element are nontrivial since $a\notin\langle x,y\rangle$. Therefore, this element cannot belong to $\Rist_G(1)$.

Thus, $\Rist_G(1)$ is trivial and $G$ acts essentially freely on $\partial T_2$ by Proposition~\ref{prop:rist}.\vspace{3mm}

\noindent{\bf 2294.} The group generated by this automaton is isomorphic to $BS(1,-3)$ with the following presentation with respect to generators $a,b,c$ (see~\cite{bondarenko_gkmnss:full_clas32_short}):
\[G\cong\langle a,b,c\ |\ a^{-1}(a^{-1}c)a(a^{-1}c)^3=ca^{-1}cb^{-1}=1\rangle,\]
that can be obtained from the standard presentation of $BS(1,-3)$ by Tietze transformations.

The Mikhailova system for this automaton is
\[\begin{array}{lcl}
c^{-1}aca^{-1}cba^{-1}ca^{-2}c&=&(a, a^{-1}cbc^{-1}a^2c^{-1}ab^{-1}c^{-1}a),\\
c&=&(b, a),\\
c^{-1}a^2ba^{-1}ca^{-2}c&=&(c, a^{-1}cbac^{-1}ab^{-1}c^{-1}a),\\
c^{-1}ac^{-1}b&=&(1, a^{-1}cb^{-1}c),\\
c^{-2}a^2c^{-1}ab^{-1}c&=&(1, a^{-2}cba^{-1}c),
\end{array}\]
where $a^{-1}cb^{-1}c=a^{-2}cba^{-1}c=1$ in $G$.

Now by Proposition~\ref{prop:ess_free_aut} it is enough to prove that the map $\phi\colon F_3\to F_3$ defined by
\[\begin{array}{l}
\phi(a)=a^{-1}cbc^{-1}a^2c^{-1}ab^{-1}c^{-1}a,\\
\phi(b)=a,\\
\phi(c)=a^{-1}cbac^{-1}ab^{-1}c^{-1}a
\end{array}\]
induces an automorphism of $G$. We use \verb"AutomGrp" package to verify that the relators of $G$ are mapped to the identity element in $G$:
\begin{verbatim}
gap> G:=AutomatonGroup("a=(b,c)(1,2),b=(c,a)(1,2),c=(b,a)");
< a, b, c >
gap> A:=a^-1*c*b*c^-1*a^2*c^-1*a*b^-1*c^-1*a;
a^-1*c*b*c^-1*a^2*c^-1*a*b^-1*c^-1*a
gap> B:=a;
a
gap> C:=a^-1*c*b*a*c^-1*a*b^-1*c^-1*a;
a^-1*c*b*a*c^-1*a*b^-1*c^-1*a
gap> IsOne(A^-2*C*A*(A^-1*C)^3);
true
gap> IsOne(C*A^-1*C*B^-1);
true
\end{verbatim}

Thus, $\phi$ induces a surjective endomorphism of $G$ and by Lemma~\ref{lem:hopf} $\phi$ must be an isomorphism.\vspace{3mm}

\noindent{\bf 2313.} The wreath recursion for $G_{2313}$ is $a=(c,c)\sigma, b=(b,b)\sigma, c=(b,a)$ and the group $G$ it generates is isomorphic to $\Z^2\rtimes (\Z/2\Z)$ as shown in~\cite{bondarenko_gkmnss:full_clas32_short}. Elements $x=ab$ and $y=cb$ freely generate $\Z^2$ and
\[G\cong \langle x,y\rangle\rtimes \langle b\rangle,\]
where $b$ is an element of order 2 acting nontrivially on the first level and acting on $\langle x,y\rangle$ by conjugation inverting each element.

Consider first elements in $\langle x,y\rangle$. We have the following wreath recursion for $x$ and $y$:
\[\begin{array}{lcl}
x&=&(y,y),\\
y&=&(1,x)\sigma.
\end{array}
\]
This recursion coincides with the definition of an automaton $771$, which generates a group acting essentially freely on $\partial T_2$.
On the other hand, $G$ can be defined by wreath recursion $x=(y,y),y=(1,x)\sigma, b=(b,b)\sigma$. Since $b$ has order $2$, each element $g$ in the complement of $\langle x,y\rangle$ in $G$ can be written as $wb$, where $w\in\langle x,y\rangle$. Both sections of $g$ will be words in $x,y$ and $b$ containing exactly one $b$ (since $b=(b,b)\sigma$). Thus, these sections cannot be trivial since $b\notin\langle x,y\rangle$.

Therefore, the group $G$ also acts essentially freely on $\partial T_2$.\vspace{3mm}

\noindent{\bf 2320.} The group generated by this automaton is also isomorphic to $BS(1,-3)$ with the following presentation with respect to generators $a,b,c$:
\[G\cong\langle a,b,c\ |\ a(c^{-1}a)a^{-1}(c^{-1}a)^3=ca^{-1}cb^{-1}=1\rangle,\]
that can be obtained from the standard presentation of $BS(1,-3)$ by Tietze transformations.

Indeed, these relations do hold in $G$:
\begin{verbatim}
gap> G:=AutomatonGroup("a=(a,c)(1,2),b=(c,b)(1,2),c=(b,a)");
< a, b, c >
gap> IsOne((c^-1*a)^(a^-1)*(c^-1*a)^3);
true
gap> IsOne(c*a^-1*c*b^-1);
true
\end{verbatim}

And since both $a$ and $c^{-1}a$ are of infinite order:
\begin{verbatim}
gap> Order(a);
infinity
gap> Order(c^-1*a);
infinity
\end{verbatim}
we have an isomorphism $G\cong BS(1,-3)$.

The Mikhailova system for this automaton is
\[\begin{array}{lcl}
aca^{-1}&=&(a, cbc^{-1}),\\
c&=&(b, a),\\
bc^{-1}bc^{-1}aca^{-1}&=&(c, ca^{-1}cbc^{-1}),\\
c^{-1}ac^{-1}b&=&(1, a^{-1}cb^{-1}c),\\
a^{-1}ca^{-1}bc^{-1}bc^{-1}aca^{-1}&=&(1, a^{-1}ba^{-1}cbc^{-1}),\\
\end{array}\]
where $a^{-1}cb^{-1}c=a^{-1}ba^{-1}cbc^{-1}=1$ in $G$.

Now by Proposition~\ref{prop:ess_free_aut} it is enough to prove that the map $\phi\colon F_3\to F_3$ defined by
\[\begin{array}{l}
\phi(a)=cbc^{-1},\\
\phi(b)=a,\\
\phi(c)=ca^{-1}cbc^{-1}
\end{array}\]
induces an automorphism of $G$. We use \verb"AutomGrp" package to verify that the relators of $G$ are mapped to the identity element in $G$:
\begin{verbatim}
gap> A:=c*b*c^-1;
c*b*c^-1
gap> B:=a;
a
gap> C:=c*a^-1*c*b*c^-1;
c*a^-1*c*b*c^-1
gap> IsOne((C^-1*A)^(A^-1)*(C^-1*A)^3);
true
gap> IsOne(C*A^-1*C*B^-1);
true
\end{verbatim}

Thus, $\phi$ induces an surjective endomorphism of $G$ and by Lemma~\ref{lem:hopf} $\phi$ must be an isomorphism.\vspace{3mm}

\noindent{\bf 2374.} The wreath recursion for $G_{2374}$ is $a=(a,c)\sigma, b=(c,a)\sigma, c=(c,a)$. Since $b=cac^{-1}$, the group generated by this automaton coincides as a subgroup of $\Aut(T_2)$ with the lamplighter group generated by automaton 930, which acts essentially freely on $\partial T_2$.
\vspace{3mm}

\noindent{\bf 2388.} The wreath recursion for $G_{2388}$ is $a=(c,a)\sigma, b=(b,b)\sigma, c=(c,a)$. Since $b=\sigma=c^{-1}a$, the group generated by this automaton coincides as a subgroup of $\Aut(T_2)$ with the lamplighter group generated by automaton 821, which act essentially freely on $\partial T_2$.
\vspace{3mm}

\noindent{\bf 2426.} The wreath recursion for $G_{2426}$ is $a=(b,b)\sigma, b=(c,c)\sigma, c=(c,a)$ and the group itself is isomorphic to $\Z^2\rtimes (\Z/2\Z)$ as shown in~\cite{bondarenko_gkmnss:full_clas32_short}. The proof of essential freeness is identical to the one for the automaton 2277. The elements $x=ba$ and $y=bc$ freely generate $\Z^2$ and
\[G\cong \langle x,y\rangle\rtimes \langle b\rangle,\]
where $b$ is an element of order 2 acting nontrivially on the first level.

Consider first elements in $\langle x,y\rangle$. We have the following wreath recursion for $x$ and $y$:
\[\begin{array}{lcl}
x&=&(y^{-1},y^{-1}),\\
y&=&(y^{-1}x,1)\sigma.
\end{array}
\]

Each element of the stabilizer of the first level of $\langle x, y\rangle$ can be written as $x^{n}y^{2m}$ for some $n,m\in Z$. Since
\[x^{n}y^{2m}=(x^my^{-m-n},x^my^{-m-n})\]
and the sections at the vertices of the first level are equal, the only time this element belongs to $\Rist_G(1)$ is when these sections are trivial, i.e. $x^{n}y^{2m}=1$.

On the other hand, both sections of each element in the complement of $\langle x,y\rangle$ in $G$ will be words in $x,y$ and $c$ containing exactly one $c$. Thus, these sections cannot be trivial since $c\notin\langle x,y\rangle$.

Thus, $\Rist_G(1)$ is trivial and $G$ acts essentially freely on $\partial T_2$ by Proposition~\ref{prop:rist}.

The only two remaining automata to consider are automata $\A_{2193}$ and $\A_{2372}$. We devote the next two subsections to the complete analysis of these two special cases.\vspace{3mm}

\subsection{Automaton 2193}
\label{ssec:2193}
Throughout this subsection we denote by $G$ the group $G_{2193}$ generated by automaton $\A_{2193}$ and defined by the following wreath recursion: $a=(c,b)\sigma$, $b=(a,a)\sigma$, $c=(a,a)$. The automaton $\A_{2193}$ itself is depicted in Figure~\ref{fig:2193}. Our goal in this subsection is to prove the following structure theorem for $G$, that will allow us to prove that the action of $G$ on $\partial T_2$ is essentially free in Corollary~\ref{cor:essfree2193}.

\begin{figure}
\begin{center}
\begin{picture}(1450,1090)(0,130)
\put(200,200){\circle{200}} 
\put(1200,200){\circle{200}}
\put(700,1070){\circle{200}}
\allinethickness{0.2mm} \put(45,280){$a$} \put(1280,280){$b$}
\put(820,1035){$c$}
\put(164,165){$\sigma$}  
\put(1164,165){$\sigma$}  
\put(634,1022){$id$}       
\put(300,200){\line(1,0){800}} 
\path(1050,225)(1100,200)(1050,175)   
\spline(200,300)(277,733)(613,1020)   
\path(559,1007)(613,1020)(591,969)    
\spline(287,150)(700,0)(1113,150)     
\path(325,109)(287,150)(343,156)      
\spline(650,983)(250,287)      
\path(297,318)(250,287)(253,343)      
\put(230,700){$_0$} 
\put(680,240){$_1$} 
\put(650,87){$_{0,1}$}   
\put(455,585){$_{0,1}$}  
\end{picture}
\caption{Automaton $\A_{2193}$ generating $G_{2193}$\label{fig:2193}}
\end{center}
\end{figure}
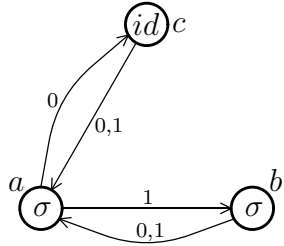

\begin{theorem}
\label{thm:structure2193}
The group $G=\langle a,b,c\rangle=\langle a^2, b^{-1}c, b^{-1}a, ac^{-1}a\rangle$ is solvable of derived length 3 and has the following structure:
\[G\cong \LL_{2,2}\rtimes (\Z/2\Z)\cong\bigl((\Z/2\Z)^2\wr \Z\bigr)\rtimes (\Z/2\Z),\]
where the isomorphism is induced by sending the first two generators $a^2$, $b^{-1}c$ from the second generating set of $G$ to generators of the base group $(\Z/2\Z)^2$ in $\LL_{2,2}$, the generator $b^{-1}a$ to the generator of $\Z$ in $\LL_{2,2}$, and the generator $t:=ac^{-1}a$ of $G$ to the generator of $\Z/2\Z$ in the semidirect product $\LL_{2,2}\rtimes (\Z/2\Z)$ acting on $\LL_{2,2}$ according to the following rules:
\begin{equation}
\label{eqn:conj_by_t_thm}
\begin{array}{l}
(b^{-1}c)^t=(b^{-1}a)^{-1}(b^{-1}c)a^2y^{-1}(b^{-1}c)^{-1}(b^{-1}a)a^2(b^{-1}c)^{-1}(b^{-1}a),\\
(b^{-1}a)^t=(b^{-1}a)^{-1}(b^{-1}c)a^2y^{-1}(b^{-1}c)^{-1}(b^{-1}a),\\
(a^2)^{t}=(b^{-1}a)^{-1}(b^{-1}c)a^2(b^{-1}c)^{-1}(b^{-1}a).
\end{array}
\end{equation}
Moreover, the group $G$ has the following presentation:
\begin{multline}
\label{eqn:presentation2193}
G\cong\langle a,b,c\ |\   a^4=(b^{-1}c)^2=1,\\ \left[a^2,(a^2)^{(b^{-1}a)^i}\right]=\left[a^2,(b^{-1}c)^{(b^{-1}a)^i}\right]=\left[b^{-1}c,(b^{-1}c)^{(b^{-1}a)^i}\right]=1,\ i\ge1,\\
(ba^2)^2=(ca^2)^2=1\rangle
\end{multline}
\end{theorem}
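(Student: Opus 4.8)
\textbf{Proof proposal for Theorem~\ref{thm:structure2193}.}

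The plan is to work directly with the wreath recursion $a=(c,b)\sigma$, $b=(a,a)\sigma$, $c=(a,a)$ and build everything from explicit section computations, introducing the four proposed generators $u_1=a^2$, $u_2=b^{-1}c$, $s=b^{-1}a$, $t=ac^{-1}a$ as soon as possible. First I would compute the wreath recursions of these four elements: since $a^2=(c,b)\sigma(c,b)\sigma=(cb,bc)$, $b^{-1}c=(a^{-1},a^{-1})\sigma(a,a)=(a^{-1}a,a^{-1}a)\sigma=\sigma$ — wait, more carefully $b^{-1}=(a^{-1},a^{-1})\sigma$ so $b^{-1}c=(a^{-1},a^{-1})\sigma(a,a)=(a^{-1}a, a^{-1}a)\sigma=\sigma$; this is too clean so I would recheck signs, but the point is that each of $u_1,u_2,s,t$ has a recursion whose first-level sections lie again in the subgroup they generate, establishing self-similarity of $H=\langle u_1,u_2,s,t\rangle$ and simultaneously that $H=G$ (the reverse inclusion $a,b,c\in H$ follows by solving: $a^2=u_1$, and one recovers $a$, $b$, $c$ from $s,t,u_1,u_2$ up to checking $a\in H$, which may require an extra short argument or a different choice of expression). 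I would record the recursions of $u_1,u_2,s$ and verify $u_1^2=u_2^2=1$ directly on the recursion, and that $[u_1,u_2]=1$.

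The core of the argument is to identify $N=\langle u_1,u_2,s\rangle$ with $\LL_{2,2}=(\Z/2\Z)^2\wr\Z$. I would first show $N$ is normal in $G$ of index $2$ (since $t$ acts nontrivially on level $1$ and $t^2=1$ — to be verified on the recursion — while $u_1,u_2,s$ generate everything in $\St_G(1)$ together with, at most, $t$-cosets), then compute the conjugates $u_1^{s^i}$, $u_2^{s^i}$ by induction on $i$ using the recursion for $s$, and prove they pairwise commute and are involutions; this gives a surjection $\LL_{2,2}\twoheadrightarrow N$. Injectivity (i.e.\ that $N$ is \emph{exactly} $\LL_{2,2}$ and not a proper quotient) I would get either by exhibiting the standard faithful action of $\LL_{2,2}$ matching the tree action, or by a length/level argument showing $s$ has infinite order and the $u_i^{s^i}$ are independent — concretely, tracking the portrait of a putative relator down the tree and using that $G$ is infinite hence acts spherically transitively (Lemma~3 of~\cite{bondarenko_gkmnss:full_clas32_short}). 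The conjugation formulas~\eqref{eqn:conj_by_t_thm} are then a finite computation: express $t$ by its recursion, conjugate the recursions of $u_1,u_2,s$, and rewrite the resulting sections back in terms of the generators; the awkward element $y$ appearing there is presumably shorthand for one of the generators or a short word, which I would define explicitly before stating the formulas. Solvability of derived length $3$ follows structurally: $\LL_{2,2}$ is metabelian (derived length $2$), the extension by $\Z/2\Z$ adds at most one, and derived length exactly $3$ (not $2$) is checked by exhibiting a nontrivial element of $G''$, e.g.\ a commutator of commutators that the recursion shows is nonidentity.

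Finally, for the presentation~\eqref{eqn:presentation2193} I would use the standard presentation of $\LL_{2,2}\rtimes(\Z/2\Z)$ — generators $u_1,u_2,s$ with the lamplighter relations $u_1^2=u_2^2=[u_1,u_2]=1$, $[u_1,u_1^{s^i}]=[u_1,u_2^{s^i}]=[u_2,u_2^{s^i}]=1$ for $i\ge1$, plus a generator $t$ with $t^2=1$ and the relations~\eqref{eqn:conj_by_t_thm} expressing conjugation — then perform Tietze transformations back to the generating set $\{a,b,c\}$, using $a^2=u_1$, $b^{-1}c=u_2$, $b^{-1}a=s$, $ac^{-1}a=t$ to eliminate $u_1,u_2,s,t$; the relations $a^4=1$, $(b^{-1}c)^2=1$, $(ba^2)^2=(ca^2)^2=1$ and the three families of commutator relations in~\eqref{eqn:presentation2193} should be exactly the images. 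To know these relations \emph{suffice} (not just hold), I would verify that the group they define surjects onto $G$ and has the right structure — i.e.\ that adding them collapses the free group to something of the correct "size", which is cleanest to see by checking the defined group is itself an extension of $\LL_{2,2}$ by $\Z/2\Z$ and hence can be no larger than $G$.

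\textbf{Main obstacle.} The delicate point is proving \emph{faithfulness} — that $N$ is genuinely $\LL_{2,2}$ and $G$ is genuinely the full semidirect product, rather than a proper quotient. Establishing that the relations hold is routine wreath-recursion bookkeeping (and partly machine-checkable), but showing there are no extra relations requires either producing a concrete faithful model compatible with the tree action or a careful infinite-order / level-stabilizer argument; this is where I expect the real work to lie, and likely where~\cite{bondarenko_gkmnss:full_clas32_short} and the contraction properties of the automaton get used.
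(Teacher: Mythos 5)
Your overall decomposition is exactly the paper's: the subgroup $N=\langle a^2,\,b^{-1}c,\,b^{-1}a\rangle$ you introduce coincides with the paper's $H=\langle a^2,x,y\rangle$ (where $x=a^{-1}c$, $y=b^{-1}a$, so $b^{-1}c=yx$ and your $s$ is $y$), and the paper likewise identifies $H\cong\LL_{2,2}$, shows $[G:H]=2$, and adjoins $t=ac^{-1}a=ax^{-1}$. However, there is a genuine gap at precisely the point you flag as the ``main obstacle'': you never actually prove faithfulness, i.e.\ that the commuting involutions $(a^2)^{(b^{-1}a)^i}$ are pairwise distinct and independent of the $(b^{-1}c)^{(b^{-1}a)^j}$, so that the surjection $\LL_{2,2}\twoheadrightarrow N$ is injective. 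This is where essentially all of the paper's work lies: it shows $a^2$ and $yx$ are \emph{spherically homogeneous}, develops a calculus of ``generalized conjugations'' preserving spherical homogeneity, and then --- since the conjugates of $a^2$ are \emph{not} finitary, unlike those of $yx$ --- introduces \emph{antifinitary} automorphisms and proves by a period-$6$ induction on exponents (with sections taken at levels congruent to $4$ mod $12$) that $(a^2)^{y^{6i-1}}$ has antidepth exactly $12i-2$, whence all these conjugates are distinct; distinctness from the $(yx)^{y^j}$ then follows because those are finitary while $a^2$ is not. ``Tracking the portrait of a putative relator down the tree'' is not a substitute for this: without a quantitative invariant such as depth or antidepth that separates the conjugates, the induction does not close, and a priori $N$ could be a proper quotient of $\LL_{2,2}$.

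There is also a concrete false step: you justify $[G:N]=2$ by saying $t$ ``acts nontrivially on level $1$,'' but $t=ac^{-1}a$ \emph{stabilizes} the first level (the root permutations multiply to $\sigma\cdot 1\cdot\sigma=1$); moreover $b^{-1}c=\sigma$ does \emph{not} stabilize level $1$, so $N\not\le\St_G(1)$ and the coset structure cannot be read off the first level at all. The paper instead shows $N$ is normal (it is closed under conjugation by $a$, via $x^a=x^{-1}a^2$ and $y^a=a^2y^{-1}$, and contains $G'$) and that $a\notin N$ by comparing the permutation groups induced on the \emph{third} level; then $t=ax^{-1}\notin N$ because $x\in N$ while $a\notin N$. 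Your remaining steps --- the conjugation formulas by finite computation, the Tietze rewriting of the presentation, derived length $3$ via a nontrivial iterated commutator such as $[[a,b],[a,c]]$, and the observation that the stray $y$ in the stated formulas is just $b^{-1}a$ --- all match the paper and are fine once the two issues above are repaired.
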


We begin from the introduction of necessary notation and technical lemmas. It is shown in~\cite{bondarenko_gkmnss:full_clas32_short} that a group $L=\langle x=a^{-1}c, y=b^{-1}a\rangle$ is isomorphic to the lamplighter group $\LL$, and this group acts on $X^*$ in a self-similar way via the following wreath recursion:
\begin{equation}
\begin{array}{lcll}
x&=&(y\phantom{^{-1}},&x^{-1})\sigma,\\
y&=&(y^{-1},&x\phantom{^{-1}}).
\end{array}
\end{equation}

Below, we will use the \verb"GAP" package \verb"AutomGrp"~\cite{muntyan_s:automgrp}. For the convenience of the reader, in nontrivial cases we will provide a code used to obtain the results. We start from encoding $G$, together with extra generators $x$ and $y$, in \verb"AutomGrp":

\begin{verbatim}
gap> L:=SelfSimilarGroup("a=(c,b)(1,2), b=(a,a)(1,2), c=(a,a),\
>                         x=(y,x^-1)(1,2), y=(y^-1,x)");
< a, b, c, x, y >
\end{verbatim}

We first observe that the following relations hold in $G$ (as can be verified either by hands or using \verb"IsOne" or \verb"FindGroupRelation" commands in \verb"AutomGrp"):
\begin{eqnarray}
\label{eqn:abel1}a^4=b^4=c^4=[b,c]=(cb^{-1})^2=1,\\
\label{eqn:abel2}b^ab^{a^{-1}}=c^ac^{a^{-1}}=a^ca^{b^{-1}}=1.
\end{eqnarray}

\begin{lemma}
The derived subgroup $G'$ of $G$ has index 8 in $G$ and the abelianization $G/G'$ of $G$ is isomorphic to $(\Z/2\Z)^3$.
\end{lemma}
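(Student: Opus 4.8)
The plan is to show two things: first, that the commutator subgroup $G'$ is contained in the subgroup $N$ generated (as a normal subgroup, or in fact as a subgroup) by the three elements $a^2$, $b^{-1}c$, and the commutators $[a^2,(a^2)^{(b^{-1}a)^i}]$, etc., and second, that the quotient $G/N$ is an elementary abelian $2$-group of rank exactly $3$. Concretely, I would first read off from the relations \eqref{eqn:abel1}--\eqref{eqn:abel2} that the images of $a$, $b$, $c$ in any abelian quotient all have order dividing $2$ (from $a^4=b^4=c^4=1$ this is not immediate, but $(cb^{-1})^2=1$ together with $[b,c]=1$ shows $b\equiv c$, and the relations $b^ab^{a^{-1}}=1$, $a^ca^{b^{-1}}=1$ become, modulo $G'$, the statements $b^2\equiv 1$ and $a^2\equiv 1$). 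Hence $G/G'$ is a quotient of $(\Z/2\Z)^3$ generated by the images of $a$, $b$, $c$, so $|G/G'|\le 8$.

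Next I would establish the reverse inequality $|G/G'|\ge 8$, i.e.\ that $a$, $b$, $c$ are independent modulo $G'$. The cleanest route is to exhibit a surjection from $G$ onto $(\Z/2\Z)^3$. One natural candidate is the parity/abelianization-type homomorphism coming from the action on the tree: track, for each of the three generators, suitable parity invariants (for instance, the parity of the permutation induced on some fixed level, or counts of $a$'s, $b$'s, $c$'s weighted appropriately). Since the wreath recursion is explicit, I would compute the induced homomorphism $G\to (\Z/2\Z)^3$ sending $a\mapsto(1,0,0)$, $b\mapsto(0,1,0)$, $c\mapsto(0,0,1)$ and check it is well-defined by verifying it kills all the defining relations of $G$ — in practice this is exactly the kind of check \verb"AutomGrp"'s \verb"FindGroupRelation"/\verb"IsOne" routines, or the later presentation \eqref{eqn:presentation2193}, make routine. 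Alternatively, and perhaps more in the spirit of the paper, I would simply invoke the structural description already announced in Theorem~\ref{thm:structure2193}: under $G\cong \LL_{2,2}\rtimes(\Z/2\Z)$ the abelianization of $\LL_{2,2}\cong(\Z/2\Z)^2\wr\Z$ is $(\Z/2\Z)^2\times\Z$, and one checks the $\Z/2\Z$-action on the base group's abelianization and how the $\Z$ factor survives; but since Theorem~\ref{thm:structure2193} is what is being proved, I would instead give the direct argument via the explicit homomorphism to avoid circularity.

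The main obstacle I anticipate is the lower bound $|G/G'|\ge 8$: the upper bound is a short symbolic manipulation with the given relations, but proving the three generators are genuinely independent modulo $G'$ requires producing a concrete abelian quotient of the right size, and one must be careful that the parity invariants chosen really are homomorphisms on $G$ (not just on the free group) — i.e.\ that they respect the defining relations of the automaton group. Once a valid surjection $G\twoheadrightarrow (\Z/2\Z)^3$ is in hand, the two inequalities combine to give $G/G'\cong(\Z/2\Z)^3$ and hence $[G:G']=8$, completing the proof. If one prefers to defer the relation-checking, an honest alternative is to note that $\LL_{2,2}\rtimes(\Z/2\Z)$ — whose isomorphism with $G$ will be established just below in the proof of Theorem~\ref{thm:structure2193} — visibly has abelianization $(\Z/2\Z)^3$, and to present this lemma after (or as a corollary of) that structural result; but taken on its own, the direct homomorphism argument is the shortest self-contained path.
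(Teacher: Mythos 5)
Your upper bound is essentially the paper's: the relations $b^ab^{a^{-1}}=c^ac^{a^{-1}}=a^ca^{b^{-1}}=1$ from~\eqref{eqn:abel2} force $a^2\equiv b^2\equiv c^2\equiv 1$ modulo $G'$, hence $|G/G'|\le 8$. One slip along the way: $(cb^{-1})^2=1$ together with $[b,c]=1$ gives only $b^2\equiv c^2$ in the abelianization, not $b\equiv c$; the latter would contradict the conclusion you are aiming for (it would force $|G/G'|\le 4$). The remark is inessential, since~\eqref{eqn:abel2} already yields everything needed, but it should be deleted.

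The genuine gap is in the lower bound. Declaring $a\mapsto(1,0,0)$, $b\mapsto(0,1,0)$, $c\mapsto(0,0,1)$ and ``verifying it kills all the defining relations of $G$'' is not an available move at this point: no presentation of $G$ is known yet --- presentation~\eqref{eqn:presentation2193} is obtained \emph{after}, and using, this lemma, so invoking it here is circular --- and \texttt{IsOne}/\texttt{FindGroupRelation} can confirm particular relations but cannot certify that a candidate map respects \emph{all} relations of an automaton group. Your fallback, parity invariants coming from the action on a fixed level, is the right instinct but fails in its naive form: the sign of the permutation induced on level $n$ is indeed a homomorphism $G\to\Z/2\Z$ for every $n$, but for this automaton the level-$1$, level-$2$ and level-$3$ signs send $(a,b,c)$ to $(1,1,0)$, $(1,0,0)$ and $(0,0,0)$ respectively (and deeper levels give nothing new), so these characters only detect a quotient of rank $2$. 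The paper's resolution is to take the honest finite quotient $G/\St_G(3)$ given by the permutation action on the third level --- which requires no relation-checking whatsoever --- and to compute that it has order $64$ with derived subgroup of order $8$; its abelianization therefore has order $8$, and since it is a quotient of $G$, so does a quotient of $G/G'$, giving $[G:G']\ge 8$. To repair your argument, replace the ``define on generators and check relations'' step by this (or an equally concrete) quotient computation.
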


\begin{proof}
It follows from~\eqref{eqn:abel2} that the images of generators $a,b,c$ in the abelianization $G/G'$ all have order $2$. Thus, $G/G'$ may have at most 8 elements and the commutator subgroup $G'$ has index at most $8$ in $G$. On the other hand, by looking at the third level of the tree we deduce that this index has to be at least 8. Indeed, if $\Stg(3)$ denotes a normal subgroup of $G$ consisting of all elements stabilizing all vertices of the third level of the tree and $\chi\colon G\to G/\Stg(3)$ is a canonical epimorphism, then $\chi(G')=(G/\Stg(3))'$ and $G'<\chi^{-1}((G/\Stg(3))')$. Now since $[G/\Stg(3):(G/\Stg(3))']=8$:
\begin{verbatim}
gap> Size(PermGroupOnLevel(G,3));
64
gap> Size(DerivedSubgroup(PermGroupOnLevel(G,3)));
8
\end{verbatim}
we get that the index of $G'$ in $G$ is at least the index of $\chi^{-1}((G/\Stg(3))')$ in $G$, which is equal to 8.
\end{proof}

The Reidemeister-Schreier procedure with the system of coset representatives $T=\{1,a,b,c,ab,ac,bc,abc\}$ yields:
\begin{equation}
\label{eqn:commutant}
G'=\langle a^2, [b^{-1},a^{-1}],[c^{-1},a^{-1}]\rangle=\langle a^2, [a,b],[a,c].\rangle
\end{equation}
Moreover, this generating set is minimal, what can be seen already on the third level of the tree while passing to corresponding finite quotients.

Consider the subgroup $H$ of $G$ defined by
\[H=\langle a^2, x,y\rangle.\]
As we will use $H$ in the computations below, we also encode it in \verb"AutomGrp".
\begin{verbatim}
gap> H := Group(a^2, x, y);
< a^2, x, y >
\end{verbatim}

\begin{proposition}
Subgroup $H$ is a subgroup of $G$ of index 2 (hence, $H$ is normal in $G$ and contains $G'$). Moreover, $G=\langle H,a\rangle$.
\end{proposition}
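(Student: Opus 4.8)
The plan is to establish the two assertions in the order ``$G=\langle H,a\rangle$'', then ``$[G:H]=2$'', the second of which immediately yields that $H$ is normal and contains $G'$ (every index-$2$ subgroup has these properties). The identity $G=\langle H,a\rangle$ is essentially a restatement of the definitions of $x$ and $y$: since $x=a^{-1}c$ and $y=b^{-1}a$ we have $c=ax$ and $b=ay^{-1}$, so $b,c\in\langle a,x,y\rangle\subseteq\langle H,a\rangle$, and together with $a\in\langle H,a\rangle$ this forces $G=\langle a,b,c\rangle=\langle H,a\rangle$. In particular $G=\langle a,x,y\rangle$.

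For $[G:H]=2$ I would first prove $H\trianglelefteq G$. Because $G=\langle H,a\rangle$, it is enough to check that conjugation by $a$ carries each of the three generators $a^2,x,y$ of $H$ back into $H$: this gives $aHa^{-1}\subseteq H$, and since $a^{-1}=a^3=a\cdot a^2$ with $a^2\in H$, it also gives $a^{-1}Ha\subseteq H$, hence $aHa^{-1}=H$. Conjugation fixes $a^2$, so only $x$ and $y$ require work. The key observation is that relations \eqref{eqn:abel1}--\eqref{eqn:abel2} already force $(ay)^2=1$ and $(ac^{-1}a)^2=1$ in $G$: from $b^ab^{a^{-1}}=1$ one obtains $ba^2b=a^2$, and since $a^{-2}=a^2$ this is equivalent to $(ab^{-1}a)^2=1$, i.e.\ $(ay)^2=1$; the relation $c^ac^{a^{-1}}=1$ yields $(ac^{-1}a)^2=1$ in exactly the same way (alternatively both relations can be verified directly in \verb"AutomGrp"). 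Granting these, $(ay)^2=1$ gives $aya^{-1}=y^{-1}a^{-2}=y^{-1}a^2\in H$, and $(ac^{-1}a)^2=1$, rewritten as $(ax^{-1})^2=1$, gives $ax^{-1}a^{-1}=xa^{-2}\in H$ and hence $axa^{-1}=a^2x^{-1}\in H$. Thus $H\trianglelefteq\langle H,a\rangle=G$.

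To see $H\neq G$, pass to the abelianization. By the lemma proved above, $G/G'\cong(\Z/2\Z)^3$ with $\{\bar a,\bar b,\bar c\}$ a basis; the images of the generators of $H$ are $\overline{a^2}=0$, $\bar x=\bar a+\bar c$, and $\bar y=\bar a+\bar b$, which generate a subgroup of order at most $4$, hence a proper subgroup of $G/G'$. So $H$ is proper in $G$. Since $a^2\in H$, the normal subgroup $H$ has quotient $G/H=\langle aH\rangle$ of order at most $2$, and properness makes it exactly $2$; therefore $[G:H]=2$, and normality of $H$ together with $G'\subseteq H$ follow (or are reconfirmed).

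The step I expect to be the main obstacle is verifying the two relations $(ay)^2=1$ and $(ac^{-1}a)^2=1$ — that is, deducing them from \eqref{eqn:abel1}--\eqref{eqn:abel2} rather than treating them as a black box. Once they are in hand, the rest is routine bookkeeping with the identities $a^2=a^{-2}$ and $a^2\in H$.
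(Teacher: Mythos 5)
Your proof is correct, and its skeleton matches the paper's: establish $G=\langle H,a\rangle$, prove normality of $H$ by checking conjugation of the generators $a^2,x,y$ by $a$, show $H$ is proper, and conclude $[G:H]=2$ from $a^2\in H$. The conjugation identities you derive ($x^a=x^{-1}a^2$, $y^a=a^2y^{-1}$, etc.) are exactly the paper's Equation~\eqref{eqn:conj_by_a}; the paper simply asserts them, whereas you deduce them from the relations $a^4=1$, $b^ab^{a^{-1}}=1$, $c^ac^{a^{-1}}=1$ of~\eqref{eqn:abel1}--\eqref{eqn:abel2}, which is a clean and self-contained way to obtain them (your algebra there checks out). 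The one genuinely different step is properness: the paper verifies $a\notin H$ by a \verb"GAP" computation showing that the permutation induced by $a$ on the third level does not lie in the level-3 permutation group of $H$, while you instead pass to the abelianization $G/G'\cong(\Z/2\Z)^3$ from the preceding lemma and observe that the images $\overline{a^2}=0$, $\bar x=\bar a+\bar c$, $\bar y=\bar a+\bar b$ generate a subgroup of order at most $4<8$, so $H$ is proper. Your route has the advantage of replacing a machine computation with a two-line conceptual argument that reuses an already-proved lemma; the paper's route has the minor advantage of directly exhibiting $a\notin H$ without invoking the structure of $G/G'$. Both are valid, and your derivation of normality and containment of $G'$ from $[G:H]=2$ at the end is standard and fine (the paper instead gets $G'<H$ directly from its Reidemeister--Schreier generating set $G'=\langle a^2,[a,b],[a,c]\rangle$ and the identities $[a,b]=a^2y^{-2}$, $[a,c]=a^2x^2$).
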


\begin{proof}
Since $[a,b]=a^2y^{-2}$ and $[a,c]=a^2x^2$, by Equation~\eqref{eqn:commutant} we get that $G'<H$. Further, since $G=\langle a, H\rangle$, in order to check that $H$ is normal in $G$ it is enough to check that $H$ is closed under the conjugation by $a$ and $a^{-1}$, which follows from the following identities:
\begin{equation}
\label{eqn:conj_by_a}
\begin{array}{l}
x^a=x^{-1}a^2,\\
y^a=a^2y^{-1},\\
x^{a^{-1}}=a^2x^{-1},\\
y^{a^{-1}}=y^{-1}a^2.
\end{array}
\end{equation}

Further, since the permutation induced by $a$ on the third level of the tree does not belong to the permutation group acting on the third level induced by $H$:
\begin{verbatim}
gap> PermOnLevel(a,3) in PermGroupOnLevel(H,3);
false
\end{verbatim}
we get that $a\notin H$. On the other hand, since $a^2\in H$ and $H$ is normal in $G$, we get that $H$ has index 2 in $G$.
\end{proof}

The following proposition completely describes the structure of $H$, and hence, $G$.

\begin{proposition}
\label{prop:L_2}
The group $H=\langle a^2, x, y\rangle=\langle a^2, yx, y\rangle$ is isomorphic to the rank 2 lamplighter group $\LL_{2,2}=(\Z/2\Z)^2\wr \Z$, where the isomorphism is induced by sending generators $a^2$ and $yx$ of $H$ to generators of $(\Z/2\Z)^2$ of $\LL_{2,2}$ and the generator $y\in H$ to the generator of $\Z$ in $\LL_{2,2}$. Moreover, $H$ is a self-similar group generated by the 6-state automaton depicted in Figure~\ref{fig:L22autom}.
\end{proposition}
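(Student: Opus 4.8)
The plan is to construct the isomorphism $\psi\colon \LL_{2,2}\to H$ concretely, letting $u:=a^2$ and $v:=yx=b^{-1}c$ be the two lamps over the origin of $(\Z/2\Z)^2\wr\Z$ and $y=b^{-1}a$ be the shift, and then to prove that $\psi$ is injective. First I would pin down the self-similar structure: using $a=(c,b)\sigma$, $b=(a,a)\sigma$, $c=(a,a)$ and the recursions $x=(y,x^{-1})\sigma$, $y=(y^{-1},x)$ for $L$, together with relations~\eqref{eqn:abel1}--\eqref{eqn:abel2}, one computes $a^2=(cb,cb)$ and $cb=(a^2,a^2)\sigma$; hence every section of $\{a^2,x,y\}$ lies in the six-element set $\{a^2,cb,x,x^{-1},y,y^{-1}\}$, each member of which belongs to $H$ (for instance $cb\in H$ because its image in $G/H\cong\Z/2\Z$ is trivial). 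This already proves that $H$ is self-similar and is generated by the $6$-state automaton of Figure~\ref{fig:L22autom}, and it records the wreath recursion in the generators $u,v,y$ that drives the inductions below.

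Next I would verify that $u,v,y$ satisfy the defining relations of $\LL_{2,2}$. Here $u^2=1$ follows from $a^4=1$ and $v^2=1$ from $(cb^{-1})^2=1$ (both in~\eqref{eqn:abel1}), while $[u,v]=1$ is a single finite identity following from~\eqref{eqn:abel1}--\eqref{eqn:abel2}. The relations $[v,v^{y^i}]=1$, $i\ge1$, come for free: $\langle v,y\rangle=\langle x,y\rangle=L$ is the copy of $\LL$ identified in~\cite{bondarenko_gkmnss:full_clas32_short}, and since $v$ is a nontrivial involution generating $L$ together with $y$, the pair $(v,y)$ is a lamp/shift pair for $\LL$ (the conjugates of $v$ generate the base group and $y$ maps onto a generator of the $\Z$-quotient). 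The remaining relations $[u,u^{y^i}]=1$ and $[u,v^{y^i}]=1$, $i\ge1$, are the new content; I would prove them by induction on $i$, pushing $[u,u^{y^{i+1}}]$ and $[u,v^{y^{i+1}}]$ through the $6$-state recursion so that their first-level sections become commutators of exponent $y^{i}$, trivial by the inductive hypothesis, with the base case $i=1$ a direct check. These relations yield a surjection $\psi\colon\LL_{2,2}\twoheadrightarrow H$ sending the origin lamps to $a^2,yx$ and the shift to $y$, as in the statement.

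It then remains to prove $\ker\psi=1$. Write $\LL_{2,2}=B\rtimes\langle y\rangle$ with base group $B=\bigoplus_{n\in\Z}(\Z/2\Z)^2$. If $\ker\psi\cap B=1$, then $\ker\psi$ embeds into $\LL_{2,2}/B\cong\Z$ and so is trivial or infinite cyclic; but a nontrivial normal cyclic subgroup would be centralized by the finite-index abelian subgroup $B$, which cannot happen since the shift acts on $B$ with no nonzero fixed vector (equivalently, $\psi(y)=b^{-1}a$ has infinite order, so $\ker\psi\cap\langle y\rangle=1$ and no cyclic normal subgroup survives). Hence it suffices to show that $\psi|_B$ is injective, i.e.\ that the image in $\Aut(T_2)$ of every nontrivial $w\in B$ is nontrivial. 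I would do this by a descent on the length of the support of $w$ (a finite interval of $\Z$, which we may translate to start at $0$ by conjugating $w$ by a power of $y$, an operation that does not affect triviality of $\psi(w)$). If the rooted permutation of $\psi(w)$ is nontrivial then $\psi(w)\ne1$; otherwise $\psi(w)\in\St_H(1)$, and by the lamplighter shape of the $6$-state recursion its two first-level sections are $\psi(w_0)$ and $\psi(w_1)$ with $w_0,w_1\in B$ not both zero and each of strictly shorter support, so one of them is nontrivial by induction and hence $\psi(w)\ne1$. The base case is the direct verification that $a^2$, $b^{-1}c$ and $a^2b^{-1}c$ are three distinct nontrivial elements of $\Aut(T_2)$ (already their rooted permutations and first-level sections distinguish them). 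This gives $\ker\psi=1$, so $\psi$ is the claimed isomorphism, and together with the first paragraph the proposition follows.

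The main obstacle is the descent in the previous paragraph --- in particular, establishing precisely that feeding a nontrivial base-group element through the self-similar recursion returns base-group elements, not both trivial, of strictly smaller support; the explicit $6$-state automaton from the first paragraph is exactly the tool that makes this bookkeeping go through, but it has to be spelled out. By comparison, the infinite families of commutator relations in the second paragraph are a routine induction, and all the one-off identities ($[u,v]=1$, the inductive base cases, and the distinctness and nontriviality of $a^2,b^{-1}c,a^2b^{-1}c$) are finite checks done by hand or with the \verb"AutomGrp" package~\cite{muntyan_s:automgrp}.
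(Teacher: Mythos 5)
Your surjectivity half (verifying the defining relations of $\LL_{2,2}$ on $a^2$, $yx$, $y$, after pinning down the wreath recursion $a^2=(cb,cb)$, $cb=(a^2,a^2)\sigma$) is sound in outline and close to what the paper does, though the induction for $[u,u^{y^i}]=[u,v^{y^i}]=1$ is less routine than you suggest: the first-level sections of $g^y$ are not conjugates by $y^{\pm1}$ but mixed expressions such as $yg|_0x$ and $x^{-1}g|_0y^{-1}$, so one is pushed into the paper's calculus of ``generalized conjugations'' and spherical homogeneity (Lemma~\ref{lem:seq_conj}) before the induction closes. The genuine gap is in your injectivity step. You claim that if $w\in B$ is nontrivial with $\psi(w)\in\St_H(1)$, then the first-level sections of $\psi(w)$ are $\psi(w_0),\psi(w_1)$ with $w_0,w_1\in B$ of strictly shorter support. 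This fails already for $w=u=a^2$: both sections of $a^2=(cb,cb)$ equal $cb$, and $cb=a^2\cdot s$ where $s=(s,s)\sigma=[\sigma,\sigma,\sigma,\ldots]$ is the automorphism switching every letter. Since $s$ is not finitary, $cb$ is not a product of the generators $(a^2)^{y^i}$, $(yx)^{y^j}$ of shorter support; in fact, that $s$ (hence $cb$) lies in $\psi(B)$ at all is only settled by knowing that some conjugate $(a^2)^{y^j}$ is \emph{antifinitary} (all of its sections past some level equal $s$), which is precisely the hard content of the paper's proof. So the descent does not terminate: feeding a base-group generator through the recursion returns elements whose expression in your generating set has larger, not smaller, support. (Your reduction of $\ker\psi=1$ to $\ker\psi\cap B=1$ is fine; it is $\psi|_B$ that is not controlled.)

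What the paper does instead of the descent: it shows all conjugates of $a^2$ and $yx$ by powers of $y$ are spherically homogeneous, hence commuting involutions living in $\SHAut(X^*)\cong\prod_{\mathbb N}\Z/2\Z$; it quotes from the classification paper that the $(yx)^{y^i}$ are pairwise distinct \emph{finitary} elements with known depths; and it proves that the $(a^2)^{y^i}$ are pairwise distinct and independent of the finitary part by computing the antidepth of $(a^2)^{y^{6i-1}}$ to be $12i-2$ (Lemmas~\ref{lem:qsigma}--\ref{lem:sections_of_conjugates} and Corollary~\ref{cor:antidepth}, which track how length-$3$ generalized conjugations exchange the auxiliary elements $q$ and $w$). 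That depth/antidepth bookkeeping, or some equivalent linear-independence argument in $\prod_{\mathbb N}\Z/2\Z$, is exactly what your proposal is missing.
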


\begin{figure}
\begin{center}
\includegraphics{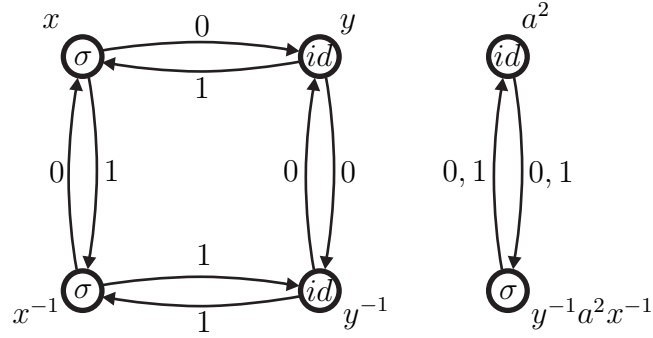}
\end{center}
\caption{Automaton generating the rank 2 lamplighter group $\LL_{2,2}$\label{fig:L22autom}}
\end{figure}

The strategy for the proof of this theorem is similar to the one used in~\cite{grigorch_z:lamplighter}, but is more general and involves more details. We start from an auxiliary definition.

\begin{definition}
An automorphism $g$ of the tree $X^*$ is called \emph{spherically homogeneous} if for each level $l$ the sections of $g$ at all vertices of $X^l$ act identically on the first level (or, equivalently, coincide).
\end{definition}

Every such automorphism can be defined by a sequence $\{\sigma_n\}_{n\geq 1}$ of permutations of $X$ where $\sigma_n$ describes the action of $g$ on the $n$-th letter of the input word over $X$. Given a sequence $(\sigma_n)_{n\geq 1}$ we will denote the corresponding spherically homogeneous automorphism by $[\sigma_n]_{n\geq 1}$ or simply as $[\sigma_1,\sigma_2,\sigma_3,\ldots]$.

Obviously, all spherically homogeneous automorphisms of $X^*$ form a group, which we denote by $\SHAut(X^*)$, isomorphic to a product of uncountably many copies of $\Sym(|X|)$. In the case of a binary tree, this group is abelian and isomorphic to the abelianization of $\Aut(T_2)$, which, in turn, is isomorphic to $\prod_{\mathbb N}\Z/2\Z$.

Below, we will prove that $H$ is contained in the normalizer of $\SHAut(X^*)$ in $\Aut(X^*)$, even though neither of generators $x$ or $y$ is spherically homogeneous. It is implicitly proved in~\cite{grigorch_z:lamplighter} that the standard representation of a lamplighter group in $\Aut(T_2)$ is contained in the normalizer of $\SHAut(X^*)$.

%
%

The following terminology is motivated by similar one in~\cite{grigorch_z:lamplighter}.

\begin{definition}[Generalized Conjugations]~Let $x$ and $y$ be as before.\\[-5mm]
\begin{itemize}
\item[(a)]
For an element $g\in\Aut(T_2)$, by a \emph{generalized elementary conjugation} of $g$ we mean the elements $y^{-1}gy$, $y^{-1}gx^{-1}$, $xgy$, $xgx^{-1}$, $ygy^{-1}$, $x^{-1}gy^{-1}$, $ygx$ and $x^{-1}gx$.
\item[(b)]
The first four elements in (a) are called \emph{positive elementary conjugations} and the latter four elements by \emph{negative elementary conjugations}.
\item[(c)] A composition of $k$ generalized (positive, negative) elementary conjugations is called a \emph{generalized (positive, negative) conjugation of length $k$}.
\end{itemize}
\end{definition}

For example, $y^{-1}y^{-1}x\cdot g\cdot x^{-1}yx^{-1}$ is a generalized positive conjugation of $g$ of length 3.

\begin{lemma}
\label{lem:seq_conj}
The generalized conjugations of spherically homogeneous automorphisms are spherically homogeneous.
\end{lemma}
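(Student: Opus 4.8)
The plan is to reduce the statement to a computation with the sections of $x$ and $y$ at the first level, and then iterate. The key observation is that $x$ and $y$ are themselves ``almost'' spherically homogeneous: their wreath recursions are
\[
x=(y,x^{-1})\sigma,\qquad y=(y^{-1},x),
\]
so the sections of $x$ and $y$ at the two vertices of the first level are again words in $x^{\pm1},y^{\pm1}$ (one or the other), and moreover the vertex permutations of $x$ and $y$ are \emph{the same at both vertices of level $1$} (trivial for $y$, $\sigma$ for $x$). This is exactly the mechanism that makes generalized conjugations preserve spherical homogeneity. So first I would record the wreath-recursion formulas for the eight generalized elementary conjugations $y^{-1}gy$, $y^{-1}gx^{-1}$, $xgy$, $xgx^{-1}$, $ygy^{-1}$, $x^{-1}gy^{-1}$, $ygx$, $x^{-1}gx$ applied to a generic $g=(g_0,g_1)\tau$, using the conjugation identities~\eqref{eqn:conj_by_a}-type computations together with the recursions for $x$ and $y$.

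Next I would set up the induction. Suppose $g=[\sigma_n]_{n\ge1}$ is spherically homogeneous, so $g=(g',g')\sigma_1$ with $g'=[\sigma_n]_{n\ge2}$ again spherically homogeneous. Take one of the eight elementary conjugations, say $h=x^{-1}gx$ (the other seven being analogous). Using $x=(y,x^{-1})\sigma$ and $x^{-1}=(x,y^{-1})\sigma$ (from the inverse automaton) together with $g=(g',g')\sigma_1$, I compute
\[
h=x^{-1}gx=\bigl((g')^{?},(g')^{?}\bigr)\,\sigma_1,
\]
where the two first-level sections of $h$ turn out to be \emph{themselves} generalized elementary conjugations of the same spherically homogeneous automorphism $g'$ — concretely, if $\sigma_1$ is trivial the two sections are $x^{-1}g'x$ and $y^{-1}g'y$, and if $\sigma_1=\sigma$ they are (up to the swap of coordinates forced by $\sigma$) $x^{-1}g'y^{-1}$ and $yg'x$ or the like; the precise pairing is read off from the recursions. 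The crucial points to check are (i) the vertex permutation of $h$ at both level-$1$ vertices equals $\sigma_1$, which holds because the contributions of $x$ and $x^{-1}$ to the permutation cancel or reinforce uniformly across the two vertices since $x$ has permutation $\sigma$ at \emph{both} vertices and $y$ has the trivial permutation at \emph{both} vertices; and (ii) the two first-level sections of $h$ are again generalized elementary conjugations of the spherically homogeneous automorphism $g'$ — so they are spherically homogeneous by the induction hypothesis (applied at one lower level, i.e.\ a descent on depth), and in particular they act identically on the first level, which is precisely what spherical homogeneity of $h$ demands.

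Since a generalized conjugation of length $k$ is a composition of $k$ elementary ones, and the class of spherically homogeneous automorphisms is closed under elementary conjugation (that being the length-$1$ case just proved), closure under length-$k$ conjugations follows by an outer induction on $k$. The induction is well founded: the claim ``every generalized elementary conjugation of a spherically homogeneous automorphism is spherically homogeneous, and its level-$1$ sections are generalized elementary conjugations of a spherically homogeneous automorphism'' is proved by induction on the tree level (each step reduces the depth by one while leaving the inductive statement of the same form), and the bookkeeping of which of the eight types occurs at each vertex is closed under the process.

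The main obstacle is purely combinatorial rather than conceptual: keeping straight the eight types of elementary conjugation and verifying that the map ``type of conjugation at a vertex'' $\mapsto$ ``pair of types of conjugation at its two children'' is well defined (i.e.\ always lands among the eight admissible types) and is compatible with the positive/negative dichotomy — since $x$ and $y$ swap the two children when $\sigma$ is applied, a positive conjugation can reappear at a child as a differently-looking but still admissible conjugation. I expect that a single case table (eight rows, each splitting according to whether $\sigma_1$ is trivial or $\sigma$) closes under the recursion, and once that table is verified the spherical-homogeneity conclusion is immediate from the uniformity of the vertex permutations noted in point (i) above. I would present the table and check two representative rows in detail, leaving the remaining routine cases to the reader.
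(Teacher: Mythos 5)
Your overall skeleton matches the paper's proof (reduce to elementary conjugations, then induct on the level of the tree, computing the wreath recursions of the conjugations), but there is a genuine gap at the decisive step, and the justification you offer for it rests on a false premise. Spherical homogeneity of $h$ requires the two first-level sections $h|_0$ and $h|_1$ to \emph{coincide} (equivalently, to define the same permutation sequence at every depth), not merely to each be spherically homogeneous: two distinct elements of $\SHAut(X^*)$ can agree on the first level and diverge deeper, and then $h$ fails homogeneity at that deeper level. Your point (ii) only argues that each section is a generalized elementary conjugation of $g'$ and hence spherically homogeneous by induction; since the two sections are \emph{different-looking} conjugations of $g'$ (e.g.\ the raw computation gives pairs like $yg'x$ and $x^{-1}g'y^{-1}$), you still owe a proof that they are equal. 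Your point (i) tries to supply the needed uniformity by claiming that ``$x$ has permutation $\sigma$ at both vertices and $y$ has the trivial permutation at both vertices,'' but this is false: $x=(y,x^{-1})\sigma$ has sections $y$ and $x^{-1}$ at the two first-level vertices, which induce \emph{different} permutations of the first level — indeed neither $x$ nor $y$ is spherically homogeneous, as the paper explicitly notes.

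The missing idea in the paper's proof is that $xy=[\sigma,\sigma,1,1,\ldots]$ and $yx=[\sigma,1,1,\ldots]$ \emph{are} spherically homogeneous involutions, hence commute with every $q\in\SHAut(X^*)$. This yields the identities $yqy^{-1}=x^{-1}qx$, $y^{-1}qx^{-1}=xqy^{-1}$, $yqx=x^{-1}qy^{-1}$, which (a) cut the eight elementary conjugation types down to four, and (b) — crucially — show that the two a priori different sections appearing in each row of the case table are in fact equal, so that each elementary conjugation of $q$ has the form $(h',h')$ or $(h',h')\sigma$ with $h'$ an elementary conjugation of $q'$. Without (b) the level-by-level induction does not close. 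If you carry out your eight-row table honestly you will be forced to discover and prove these commutation identities (or some equivalent strengthening of the inductive hypothesis), so the gap is fixable, but as written the proposal does not contain the argument that makes the lemma true.
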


\begin{proof}
By induction on the length of a generalized conjugation, it is enough to prove the lemma only for elementary generalized conjugations.

The key observation required for the proof is that both $xy=(xy)^{-1}$ and $yx=(yx)^{-1}$ are spherically homogeneous, and, consequently, commute with each $q\in \SHAut(X^*)$. Indeed, we have
\begin{equation}
\label{eqn:ABinv}
\begin{array}{l}
xy=[\sigma,\sigma,1,1,1,\ldots]\\
yx=[\sigma,1,1,1,1,\ldots]=\sigma\\
\end{array}
\end{equation}

Therefore, for each $q\in \SHAut(X^*)$ we have $xy\cdot q=q\cdot xy$ and hence,
\begin{equation}
\label{eqn:conj1}
yqy^{-1}=x^{-1}qx.
\end{equation}
Similarly, we get
\begin{equation}
\label{eqn:conj2}
\begin{array}{lcl}
y^{-1}qx^{-1}&=&xqy^{-1},\\
yqx&=&x^{-1}qy^{-1},\\
yqy^{-1}&=&x^{-1}qx.
\end{array}
\end{equation}

Therefore, it is enough to consider only 4 elementary generalized conjugations of each element (2 positive and 2 negative). The statement of the lemma will follow by induction on the level of the tree from Equations~\eqref{eqn:conj1} and~\eqref{eqn:conj2}. More precisely, we will prove that the statement $P(l)=$``for each generalized conjugation of every $q\in\SHAut(X^*)$ all its sections at all vertices of $X^l$ act identically on the first level'' is true for all $l\geq0$. The base case $P(0)$ follows trivially as there is only one section of each generalized conjugation at the root of the tree. In the induction step we assume that $P(l_0)$ is true. Let $q\in\SHAut(X^*)$ be arbitrary element. Then we have either $q=(q',q')\sigma$ or $q=(q',q')$ for some $q'\in\SHAut(X^*)$. Consider these cases separately.

\noindent \textit{Case I.} $q=(q',q')\sigma$. Then
\begin{equation}
\label{eqn:conj_identities1}
\begin{array}{lllll}
y^{-1}qy&=&(yq'x, x^{-1}q'y^{-1})\sigma&=&(yq'x,yq'x)\sigma,\\
y^{-1}qx^{-1}&=&(yq'y^{-1}, x^{-1}q'x)&=&(yq'y^{-1}, yq'y^{-1}),\\
yqx&=&(y^{-1}q'x^{-1},xq'y)&=&(xq'y,xq'y),\\
yqy^{-1}&=&(y^{-1}q'x^{-1},xq'y)\sigma&=&(xq'y,xq'y)\sigma.
\end{array}
\end{equation}

\noindent \textit{Case II.} $q=(q',q')$. Then
\begin{equation}
\label{eqn:conj_identities2}
\begin{array}{lllll}
y^{-1}qy&=&(yq'y^{-1}, x^{-1}q'x)&=&(yq'y^{-1},yq'y^{-1}),\\
y^{-1}qx^{-1}&=&(yq'x, x^{-1}q'y^{-1})\sigma&=&(yq'x,yq'x)\sigma,\\
yqx&=&(y^{-1}q'y,xq'x^{-1})\sigma&=&(y^{-1}q'y,y^{-1}q'y)\sigma,\\
yqy^{-1}&=&(y^{-1}q'y,xq'x^{-1})&=&(y^{-1}q'y,y^{-1}q'y).
\end{array}
\end{equation}

In each case we see that the sections of generalized conjugations of $q$ on the vertices of the first level coincide and are themselves generalized conjugations of an element $q'\in\SHAut(X^*)$. Therefore, by induction assumption, for each generalized conjugation of $q$ all its sections at all vertices of $X^{l_0+1}$ act identically on the first level.

We also note that the sections of positive elementary generalized conjugations are negative elementary generalized conjugations and vice versa.
\end{proof}

Recall that $xy$ is spherically homogeneous. It is crucial for the arguments below that $a^2$ is spherically homogeneous as well (note that $a$ is not spherically homogeneous). It is straightforward to check that
\begin{equation}
\label{eqn:a2spherhomo}
a^2=[1,\sigma,1,\sigma,1,\sigma,1,\ldots],
\end{equation}
where 1's and $\sigma$'s alternate with level. Therefore, by Lemma~\ref{lem:seq_conj} all conjugates of $a^2$ and $xy$ by powers of $y$ are spherically homogeneous, and thus, all of them are involutions and commute with each other. To finish the proof of Proposition~\ref{prop:L_2} it is now enough to show that all these conjugates are different.

It is proved in~\cite[p.131]{bondarenko_gkmnss:full_clas32_short} that all conjugates of $yx$ by powers of $y$ are different and finitary (i.e. have nontrivial sections only up to some finite level). This automatically implies that $(a^2)^{y^i}\neq (yx)^{y^j}$ for any $i,j$. Indeed, if $(a^2)^{y^i}=(yx)^{y^j}$, then $a^2=(yx)^{y^{j-i}}$ must be finitary, which is not the case.

Thus, it is left to show that $(a^2)^{y^i}\neq(a^2)^{y^j}$ for $i\neq j$. For this, of course it suffices to construct an infinite number of different conjugates of $a^2$ by powers of $y$.

The fact that all conjugates of $yx$ by powers of $y$ are different was proved in~\cite{bondarenko_gkmnss:full_clas32_short} by explicitly computing the depth of $(yx)^{y^i}$ for all $i$, where the depth of a finitaty automorphism $h$ is the smallest level of the tree such that all sections of $h$ at the vertices of this level are trivial. In our case, even though the conjugates of $a^2$ are not finitary any more, the conjugates of $(a^2)^{y^{-1}}$ by positive powers of $y^3$ are ``antifinitaty'' in the following sense.

\begin{definition}
An automorphism $g$ of $T_2$ is called \emph{antifinitary} if there exists a level $k$ such that the sections of $g$ at all vertices of this level coincide with the automorphism $s=(s,s)\sigma=[\sigma,\sigma,\sigma,\ldots]$ that changes all letters in any input word to the opposite ones.

The smallest $k$ with the above property is called the \emph{antidepth} of $g$.
\end{definition}

The goal of the following lemmas is to show that the conjugates of $a^2$ by powers of $y$ are all different.

\begin{lemma}
\label{lem:qsigma}
If $g\in \SHAut(X^*)$ is a spherically homogeneous automorphism of $T_2$, then for each $v\in X^*$ the section of a generalized elementary conjugation of $g$ at $v$ is a generalized elementary conjugation of $g|_v$. Moreover, the positive and the negative conjugations alternate with the level.
\end{lemma}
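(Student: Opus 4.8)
The plan is to propagate the first-level section computation already carried out in the proof of Lemma~\ref{lem:seq_conj} down the tree by induction on $|v|$. Two preliminary observations are needed. First, $\SHAut(X^*)$ is closed under taking sections: if $g=[\sigma_1,\sigma_2,\ldots]\in\SHAut(X^*)$ then for every $v\in X^*$ one has $g|_v=[\sigma_{|v|+1},\sigma_{|v|+2},\ldots]$, which is again spherically homogeneous (and in particular $g|_0=g|_1$). Second, for $g\in\SHAut(X^*)$ and any elementary generalized conjugation $h$ of $g$ (necessarily spherically homogeneous by Lemma~\ref{lem:seq_conj}, so that $h|_0=h|_1$), the common first-level section $h|_0=h|_1$ is again an elementary generalized conjugation of $g|_0=g|_1\in\SHAut(X^*)$, of the opposite sign. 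This is precisely the content of Equations~\eqref{eqn:conj_identities1} and~\eqref{eqn:conj_identities2} (covering the two cases $g=(g',g')\sigma$ and $g=(g',g')$), together with the closing remark of that proof that the section of a positive elementary conjugation is negative and vice versa; by the identities~\eqref{eqn:conj1}--\eqref{eqn:conj2} it is enough that these four conjugations were treated there explicitly.

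Granting these, I would argue by induction on the length of $v$. For $v=\emptyset$ there is nothing to prove. If $v=xv'$ with $x\in X$ and $v'\in X^*$, then $h|_v=(h|_x)|_{v'}$. By the second observation, $h|_x$ is an elementary generalized conjugation of $g|_x\in\SHAut(X^*)$ whose sign is opposite to that of $h$. Applying the induction hypothesis to the elementary generalized conjugation $h|_x$ of the spherically homogeneous automorphism $g|_x$ and to the shorter vertex $v'$, the element $(h|_x)|_{v'}=h|_v$ is an elementary generalized conjugation of $(g|_x)|_{v'}=g|_v$ whose sign is that of $h|_x$ altered exactly when $|v'|$ is odd. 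Hence the sign of $h|_v$ differs from that of $h$ precisely when $1+|v'|=|v|$ is odd, which is the asserted alternation with the level.

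No step here is genuinely hard: the mathematical content is entirely in the second observation, and the sign bookkeeping it involves is already done in the proof of Lemma~\ref{lem:seq_conj}. The only point demanding a little care is to be sure that the four elementary conjugations not displayed in~\eqref{eqn:conj_identities1}--\eqref{eqn:conj_identities2} are genuinely covered: each of them coincides, as an automorphism of $T_2$, with one of the four displayed conjugations by~\eqref{eqn:conj1} and~\eqref{eqn:conj2}, and these identifications preserve the positive/negative labelling, as one checks directly from the list of generalized elementary conjugations.
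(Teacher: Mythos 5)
Your proof is correct and takes essentially the same route as the paper's, which likewise deduces the case $|v|=1$ from Equations~\eqref{eqn:conj_identities1} and~\eqref{eqn:conj_identities2} and then concludes by induction on $|v|$. You merely make explicit the sign bookkeeping and the reduction of the eight elementary conjugations to the four displayed ones via~\eqref{eqn:conj1}--\eqref{eqn:conj2}, details the paper leaves implicit.
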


\begin{proof}
For $|v|=1$ the statement follows from Equations~\eqref{eqn:conj_identities1} and~\eqref{eqn:conj_identities2}. Then the Lemma follows trivially by induction on $|v|$.
\end{proof}

As a direct corollary of the above lemma we obtain:

\begin{corollary}
\label{cor:qsigma}
If $g$ is a spherically homogeneous automorphism of $T_2$, then for each $v\in X^*$ of even length, the section of a generalized positive conjugation of $g$ of length $k$ at $v$ is a generalized positive conjugation of $g|_v$ of length $k$.
\end{corollary}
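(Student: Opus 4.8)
The plan is to prove the corollary by induction on the length $k$ of the conjugation, with Lemma~\ref{lem:qsigma} serving as the base case $k=1$: there the section at $v$ of a generalized elementary conjugation of the spherically homogeneous automorphism $g$ is a generalized elementary conjugation of $g|_v$, and since these alternate between positive and negative with the level, a vertex $v$ of even length returns a positive one, namely of length $1=k$.

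For the inductive step one writes a generalized positive conjugation $h$ of $g$ of length $k$ as $h = p\,h'\,n$, where $h'$ is a generalized positive conjugation of $g$ of length $k-1$ and $(p,n)$ is one of the four positive pairs, so $p\in\{y^{-1},x\}$ and $n\in\{y,x^{-1}\}$. Applying the section formula~\eqref{eqn_sections} at a vertex $v$ of even length gives $h|_v = p|_v\cdot h'|_{p(v)}\cdot n|_{ph'(v)}$, and the point is to recognize each of the three factors. By Lemma~\ref{lem:seq_conj} the automorphism $h'$ is again spherically homogeneous, so all of its sections at a given level coincide; in particular $h'|_{p(v)} = h'|_v$, which by the induction hypothesis is a generalized positive conjugation of $g|_v$ of length $k-1$ (note that the vertices $v$, $p(v)$ and $ph'(v)$ all have the same, even, length).

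It then remains only to see that $p|_v$ lies in $\{y^{-1},x\}$ and $n|_{ph'(v)}$ lies in $\{y,x^{-1}\}$ once the vertex has even length, for then $p|_v\cdot h'|_v\cdot n|_{ph'(v)}$ is exactly one more positive elementary conjugation applied to $h'|_v$, hence a positive conjugation of $g|_v$ of total length $k$. This is the only place where a computation is needed, and it is essentially the observation already recorded at the end of the proof of Lemma~\ref{lem:seq_conj}: from the wreath recursion $x=(y,x^{-1})\sigma$, $y=(y^{-1},x)$ (and the induced recursions $x^{-1}=(x,y^{-1})\sigma$, $y^{-1}=(y,x^{-1})$) one sees that a first-level section of an element of $\{y^{-1},x\}$ lies in $\{y,x^{-1}\}$ and vice versa, so iterating over the (even number of) letters of $v$ returns to $\{y^{-1},x\}$. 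This level-by-level bookkeeping is the main, and only mildly technical, obstacle; everything else is formal. The same computation handles odd-length vertices as well, recovering the alternation-with-the-level assertion for arbitrary $k$, though only the even-length case is used in what follows.
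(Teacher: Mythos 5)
Your proof is correct and follows the same route the paper intends: the paper states this as a direct corollary of Lemma~\ref{lem:qsigma}, and your induction on $k$ (peeling off one elementary conjugation, using Lemma~\ref{lem:seq_conj} to identify $h'|_{p(v)}=h'|_v$, and tracking that the sets $\{y^{-1},x\}$ and $\{y,x^{-1}\}$ swap under first-level sections) is exactly the omitted bookkeeping. Nothing to correct.
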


Define the following antifinitary automorphisms of $T_2$:
\[
\begin{array}{lll}
q&=&[\sigma,\sigma,1,\sigma,1,1,\sigma,\sigma,\sigma,\ldots],\\
w&=&[1,1,1,\sigma,1,1,\sigma,\sigma,\sigma,\ldots].\\
\end{array}\]

Since $g$ is spherically homogeneous, by Lemma~\ref{lem:seq_conj} all generalized elementary conjugations of $g$ are also spherically homogeneous. The next lemma exhibits more structure.

\begin{lemma}
\label{lem:sections_of_h}~\\[-5mm]
\begin{itemize}
\item[(a)]
For each positive generalized conjugation $h$ of $q$ of length 3, and for each $v\in X^6$
\[h|_v=w.\]

\item[(b)]
For each positive generalized conjugation $h$ of $w$ of length 3, and for each $v\in X^6$
\[h|_v=q.\]
\end{itemize}

\end{lemma}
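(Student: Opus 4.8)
The plan is to prove (a) and (b) by the same direct computation, so I describe (a); (b) follows by interchanging the roles of $q$ and $w$.

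First, by Lemma~\ref{lem:seq_conj} the automorphism $h$ is spherically homogeneous, and the identities \eqref{eqn:conj1}--\eqref{eqn:conj2} show that a positive elementary conjugation of a spherically homogeneous automorphism is one of only two essentially distinct operations, $g\mapsto y^{-1}gy=xgx^{-1}$ and $g\mapsto y^{-1}gx^{-1}=xgy$. Hence there are at most $2^{3}=8$ positive generalized conjugations of $q$ of length $3$, and it suffices to verify the claim for each of them and for each $v\in X^{6}$.

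Second, since $|v|=6$ is even, Corollary~\ref{cor:qsigma} lets us replace the task ``compute $h|_v$'' by ``compute a positive generalized conjugation of $q|_v$ of length $3$'', with the same length-$3$ word of elementary operations used for every $v$. So the computation factors into two bookkeeping tasks: (i) read off $q|_v$ for $v\in X^{6}$ directly from the defining sequence of $q$; and (ii) apply each of the $8$ positive length-$3$ words to the resulting automorphism. Task (ii) is carried out level by level using the recursion formulas \eqref{eqn:conj_identities1} and \eqref{eqn:conj_identities2}, which express an elementary conjugation of an automorphism of the form $(g',g')$ or $(g',g')\sigma$ through an elementary conjugation of $g'$; the single datum that has to be propagated down each edge of the tree is the pair consisting of the current first-level permutation and the current elementary operation, and --- as recorded at the end of the proof of Lemma~\ref{lem:seq_conj} --- each step turns a positive elementary operation into a negative one and vice versa, which is precisely why after the even number $6$ of steps we are again dealing with positive conjugations. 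Iterating this for six levels yields the full defining sequence of every $h|_v$, and one checks that it coincides with the defining sequence of $w$.

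The only real difficulty is organizational: one must track, simultaneously over all $8$ length-$3$ operation words and all $2^{6}$ descending paths of length six, how the pair (first-level permutation, current operation) evolves. Because the recursion for the operation depends only on the current operation and the current permutation, and the permutation sequence of $q$ is fixed, this collapses to a small finite computation that is best displayed as a table rather than spelled out case by case. I expect no conceptual obstacle beyond getting this table right; once it is in place, (a) and (b) follow by inspection, and this lemma is exactly the ``period two'' step needed in the sequel to conclude that the conjugates $(a^{2})^{y^{i}}$ are pairwise distinct.
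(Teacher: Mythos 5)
Your reduction is exactly the one the paper uses: by the identities \eqref{eqn:conj1}--\eqref{eqn:conj2} there are only two distinct positive elementary conjugations, hence at most eight positive generalized conjugations of length $3$, and by spherical homogeneity (Lemma~\ref{lem:seq_conj}) it suffices to compute the section at the single vertex $1^6$ for each of the eight. The paper then simply performs these eight section computations in \verb"GAP" (encoding $q$ and $w$ as finite automata, which is possible because they are antifinitary) and reports the answers; your plan to do the same computation by hand, propagating the pair (current root permutation, current elementary operation) down six levels via \eqref{eqn:conj_identities1}--\eqref{eqn:conj_identities2}, is a legitimate and workable substitute. The one real shortfall is that you never actually produce the table or the resulting defining sequences: the sentence ``one checks that it coincides with the defining sequence of $w$'' is the entire content of the lemma, since nothing short of that computation distinguishes $w$ from any other spherically homogeneous automorphism. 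Until the six-level bookkeeping is written out (or delegated to a machine, as in the paper), the proof is a correct strategy rather than a proof. A small further caution: Corollary~\ref{cor:qsigma} tells you that $h|_v$ is \emph{some} positive length-$3$ conjugation of $q|_v$, not which one, so in your task (ii) you must either track the evolution of the operation word down the tree, or else verify that \emph{all} eight positive length-$3$ conjugations of $q|_v$ equal $w$ --- your write-up wavers between these two (both work, but they are different computations).
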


\begin{proof}
We use \verb"AutomGrp" to check these identities. First, we define elements $q$ and $w$ in \verb"GAP". Since we are about to compute generalized conjugations of these elements, we will redefine the whole group $G$ by adding $q$,$w$, and their sections to the list of generators.
We note that as will be shown in Lemma~\ref{lem:sections_of_conjugates}, both $q$ and $w$ are elements of $G$, so since $G$ is self-similar, we do not change the whole group by doing this. We will not use this fact in future.

\begin{verbatim}
gap> G:=SelfSimilarGroup("a=(c,b)(1,2),b=(a,a)(1,2),c=(a,a),\
>                         x=(y,x^-1)(1,2),y=(y^-1,x),\
> w=(w1,w1),w1=(w2,w2),w2=(w3,w3),w3=(w4,w4)(1,2),\
>   w4=(w5,w5),w5=(w6,w6),w6=(w6,w6)(1,2),\
> q=(q1,q1)(1,2),q1=(q2,q2)(1,2),q2=(q3,q3),q3=(q4,q4)(1,2),\
>   q4=(q5,q5),q5=(q6,q6),q6=(q6,q6)(1,2)");
< a, b, c, x, y, w, w1, w2, w3, w4, w5, w6, q, q1, q2, q3,\
 q4, q5, q6 >
\end{verbatim}
There are only 2 different generalized positive elementary conjugations of each element (recall Equations~\eqref{eqn:conj1} and~\eqref{eqn:conj2}). Therefore, there are 8 potentially different generalized positive elementary conjugations of length 3. Below, we verify the statement of the lemma by checking all eight possible cases.

For (a) we have:

\begin{verbatim}
gap> Section(y^-3*q*y^3,[1,1,1,1,1,1])=w;
true
gap> Section(y^-3*q*y^2*x^-1,[1,1,1,1,1,1])=w;
true
gap> Section(y^-3*q*y*x^-1*y,[1,1,1,1,1,1])=w;
true
gap> Section(y^-3*q*y*x^-2,[1,1,1,1,1,1])=w;
true
gap> Section(y^-3*q*x^-1*y^2,[1,1,1,1,1,1])=w;
true
gap> Section(y^-3*q*x^-1*y*x^-1,[1,1,1,1,1,1])=w;
true
gap> Section(y^-3*q*x^-2*y,[1,1,1,1,1,1])=w;
true
gap> Section(y^-3*q*x^-3,[1,1,1,1,1,1])=w;
true
\end{verbatim}

Similarly for (b):

\begin{verbatim}
gap> Section(y^-3*w*y^3,[1,1,1,1,1,1])=q;
true
gap> Section(y^-3*w*y^2*x^-1,[1,1,1,1,1,1])=q;
true
gap> Section(y^-3*w*y*x^-1*y,[1,1,1,1,1,1])=q;
true
gap> Section(y^-3*w*y*x^-2,[1,1,1,1,1,1])=q;
true
gap> Section(y^-3*w*x^-1*y^2,[1,1,1,1,1,1])=q;
true
gap> Section(y^-3*w*x^-1*y*x^-1,[1,1,1,1,1,1])=q;
true
gap> Section(y^-3*w*x^-2*y,[1,1,1,1,1,1])=q;
true
gap> Section(y^-3*w*x^-3,[1,1,1,1,1,1])=q;
true
\end{verbatim}
This finishes the proof.
\end{proof}

\begin{lemma}
\label{lem:sections_of_conjugates}
For each $i\geq 1$ and $v\in X^{12i-8}$, we have $(a^2)^{y^{6i-1}}|_v=q$.
\end{lemma}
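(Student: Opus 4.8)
The plan is to prove, by induction on $j\ge 2$, the statement $P(j)$: \emph{for every $v\in X^{6j-8}$ one has $(a^2)^{y^{3j-1}}|_v=q$ if $j$ is even, and $(a^2)^{y^{3j-1}}|_v=w$ if $j$ is odd}. Lemma~\ref{lem:sections_of_conjugates} is then the special case $j=2i$. The base case $P(2)$ asserts that all $16$ sections of $(a^2)^{y^5}$ at level $4$ equal $q$; this is a finite verification of the same type as in Lemma~\ref{lem:sections_of_h} (write down the automaton of $(a^2)^{y^5}$ and read off its level-$4$ sections), and it can in principle be done by hand.

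For the induction step I reduce to the following claim: \emph{if $\ell\ge 2$ is even and $(a^2)^{y^m}|_u=q$ for every $u\in X^{\ell}$, then $(a^2)^{y^{m+3}}|_v=w$ for every $v\in X^{\ell+6}$} (and symmetrically with $q$ and $w$ interchanged). Granting it, $P(j)\Rightarrow P(j+1)$ at once, since $\ell=6j-8$ is even and $\ge 2$ for $j\ge 2$, the exponent grows by $3$ and the level by $6$, and the value alternates between $q$ and $w$ with the parity of $j$. To prove the claim write $g=(a^2)^{y^m}$, so $(a^2)^{y^{m+3}}=y^{-3}g\,y^{3}$, and apply the section formula~\eqref{eqn_sections}: for $u\in X^{\ell}$,
\[
(a^2)^{y^{m+3}}\big|_u=y^{-3}|_u\cdot g|_{y^{-3}(u)}\cdot y^{3}|_{y^{-3}g(u)}=\alpha\, q\,\beta ,
\]
where $\alpha=y^{-3}|_u$, $\beta=y^{3}|_{y^{-3}g(u)}$, and $g|_{y^{-3}(u)}=q$ because the hypothesis holds at \emph{every} level-$\ell$ vertex and $y^{-3}(u)\in X^{\ell}$. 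Hence $(a^2)^{y^{m+3}}|_{uv'}=(\alpha q\beta)|_{v'}$ for $v'\in X^{6}$, and it suffices to show that $\alpha q\beta$ is a positive generalized conjugation of $q$ of length $3$: then Lemma~\ref{lem:sections_of_h}(a) forces $(\alpha q\beta)|_{v'}=w$ for all $v'\in X^{6}$, proving the claim, while the $w\to q$ case is identical using Lemma~\ref{lem:sections_of_h}(b).

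The heart of the argument — and the step I expect to require the most (finite but delicate) bookkeeping — is identifying the form of $\alpha$ and $\beta$. The positive elementary pairs are exactly $\{y^{-1},x\}\times\{y,x^{-1}\}$, so $\alpha q\beta$ is automatically a positive generalized conjugation of $q$ of length $3$ once $\alpha$ is a length-$3$ word over $\{y^{-1},x\}$ and $\beta$ a length-$3$ word over $\{y,x^{-1}\}$. Since $x,y$ generate an automaton group, $y^{3}$ and $y^{-3}$ each have only finitely many sections; the plan is to compute this finite set $\Sigma$ explicitly (its elements turn out to be length-$3$ words in $x^{\pm1},y^{\pm1}$, such as $y^{\pm3}$, $x^{\pm3}$, $yx^{-1}y$, $x^{-1}yx^{-1}$, $y^{-1}x^{2}$, and so on), to observe that it partitions as $\Sigma=\Sigma^{+}\sqcup\Sigma^{-}$ with $\Sigma^{+}$ the words over $\{y,x^{-1}\}$ and $\Sigma^{-}$ those over $\{y^{-1},x\}$, and to check that both sections of any element of $\Sigma^{+}$ lie in $\Sigma^{-}$ and vice versa — a dichotomy that, conceptually, simply records the image in the infinite cyclic quotient of $L\cong\LL$ and therefore alternates with the level. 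Since $y^{3}\in\Sigma^{+}$ and $y^{-3}\in\Sigma^{-}$, a one-line induction on the length of a vertex then gives that every section of $y^{3}$ (resp.\ $y^{-3}$) at an even-level vertex lies in $\Sigma^{+}$ (resp.\ $\Sigma^{-}$); as $\ell$ is even this yields exactly $\beta\in\Sigma^{+}$ and $\alpha\in\Sigma^{-}$, the matching condition needed above. Together with the base case, this completes the induction.
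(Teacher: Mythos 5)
Your argument is correct, and its skeleton is the same as the paper's: a finite base-case verification of $(a^2)^{y^{5}}$ at level $4$, followed by an induction in which the conjugating exponent grows by $3$ while the level drops by $6$, with Lemma~\ref{lem:sections_of_h} swapping $q$ and $w$ at each step (the paper merges two of your half-steps into one, inducting on $i$ and passing through $w$ at level $12i-2$; your $P(2i)$ is exactly the paper's statement). Where you genuinely diverge is in certifying that the level-$\ell$ section of $(a^2)^{y^{m+3}}$ is a \emph{positive} generalized conjugation of $q$ of length $3$. The paper gets this from Corollary~\ref{cor:qsigma}, which rests on Lemma~\ref{lem:qsigma} and hence on the case analysis~\eqref{eqn:conj_identities1}--\eqref{eqn:conj_identities2} for spherically homogeneous elements; you instead apply the section formula~\eqref{eqn_sections} to $y^{-3}\,g\,y^{3}$ and analyze only the sections of $y^{\pm 3}$. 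That analysis is even easier than you suggest: from $y=(y^{-1},x)$, $y^{-1}=(y,x^{-1})$, $x=(y,x^{-1})\sigma$, $x^{-1}=(x,y^{-1})\sigma$, each letter of $\{y,x^{-1}\}$ has both first-level sections in $\{y^{-1},x\}$ and vice versa, so by the product formula the sections of $y^{3}$ (resp.\ $y^{-3}$) at even-level vertices are length-$3$ words over $\{y,x^{-1}\}$ (resp.\ $\{y^{-1},x\}$); and since the left and right letters of a positive elementary conjugation range independently over $\{y^{-1},x\}$ and $\{y,x^{-1}\}$, every product $\alpha q\beta$ with $\alpha\in\{y^{-1},x\}^{3}$, $\beta\in\{y,x^{-1}\}^{3}$ is indeed a positive generalized conjugation of length $3$, so Lemma~\ref{lem:sections_of_h}(a) applies verbatim. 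Your route buys two small things: the conjugated element plays no role in identifying the conjugation type (you need only the inductive hypothesis at every level-$\ell$ vertex, which you correctly carry through the induction in place of the paper's appeal to spherical homogeneity via Lemma~\ref{lem:seq_conj}), and the key verification reduces to a one-line observation about single letters rather than the eight-case computation behind Lemma~\ref{lem:qsigma}. The cost is a slightly heavier base case (all $16$ level-$4$ sections rather than one section plus spherical homogeneity), which remains a finite check.
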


\begin{proof}
We proceed by induction on $i$. For $i=1$ we have:

\begin{verbatim}
gap> Section((a^2)^(y^5),[1,1,1,1])=q;
true
\end{verbatim}

The induction step follows from Lemmas~\ref{lem:qsigma} and~\ref{lem:sections_of_h}. Indeed, suppose $(a^2)^{y^{6i-1}}|_v=q$ for some $i$ and vertex $v=1^{12i-8}\in X^{12i-8}$ (recall that by Lemma~\ref{lem:seq_conj} all conjugates of $a^2$ by powers of $y$ are spherically homogeneous, so the section does not depend on the choice of $v$ in $X^{12i-8}$). Then by Corollary~\ref{cor:qsigma} $(a^2)^{y^{6i-1+3}}|_v$ is a positive (since $12i-8$ is even) generalized conjugation $h$ of length $3$ of $q$. Thus, by Lemma~\ref{lem:sections_of_h} (a)
\[(a^2)^{y^{6i-1+3}}|_{v1^6}=\bigl((a^2)^{y^{6i-1}})^{y^3}|_v\bigr)|_{1^6}=h|_{1^6}=w.\]
Repeating the same argument one more time and applying Lemma~\ref{lem:sections_of_h}(b) yields
\[(a^2)^{y^{6i-1+6}}|_{v1^{12}}=(a^2)^{y^{6(i+1)-1}}|_{1^{12(i+1)-8}}=q,\]
which finishes the proof.
\end{proof}

\begin{corollary}
\label{cor:antidepth}
For each $i\geq 1$ the antidepth of $(a^2)^{y^{6i-1}}$ is equal to $12i-2$. In particular, all conjugates of $a^2$ by powers of $y$ are different.
\end{corollary}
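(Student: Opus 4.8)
The plan is to read off the antidepth of $(a^2)^{y^{6i-1}}$ directly from Lemma~\ref{lem:sections_of_conjugates}, and then to harvest the distinctness statement from the observation that these antidepths grow with $i$.

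First I would record the antidepths of the two antifinitary building blocks. Inspecting the defining sequences, $q=[\sigma,\sigma,1,\sigma,1,1,\sigma,\sigma,\sigma,\ldots]$ has its last non-$\sigma$ entry in position $6$, so the section of $q$ at any vertex of level $6$ equals $s$ while its section at any vertex of level $5$ equals $[1,\sigma,\sigma,\ldots]\neq s$; hence $q$ (and symmetrically $w$) has antidepth $6$. Since all conjugates of $a^2$ by powers of $y$ are spherically homogeneous (Lemma~\ref{lem:seq_conj}), the section of $(a^2)^{y^{6i-1}}$ at a vertex of a given level depends only on that level, and by Lemma~\ref{lem:sections_of_conjugates} this common section is $q$ at level $12i-8$. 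Consequently, for every $v$ with $|v|=k$, the section of $(a^2)^{y^{6i-1}}$ at level $12i-8+k$ equals $q|_v$. Taking $k=6$ shows all sections at level $12i-2$ equal $s$, so the antidepth is at most $12i-2$; taking $k=5$ shows every section at level $12i-3$ equals $[1,\sigma,\sigma,\ldots]\neq s$, and since sections of $s$ are again $s$, no level $k_0\le 12i-3$ can have all its sections equal to $s$. Hence the antidepth of $(a^2)^{y^{6i-1}}$ is exactly $12i-2$.

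For the ``in particular'' clause I would argue as follows. The integers $12i-2$, $i\ge1$, are pairwise distinct, so the automorphisms $(a^2)^{y^{6i-1}}$, $i\ge1$, are pairwise distinct, giving infinitely many distinct conjugates of $a^2$ by powers of $y$. On the other hand $N=\{\,n\in\Z : [a^2,y^{n}]=1\,\}$ is a subgroup of $\Z$, and $(a^2)^{y^{j}}=(a^2)^{y^{k}}$ holds precisely when $j-k\in N$; were $N$ nontrivial, the orbit $\{(a^2)^{y^{j}}:j\in\Z\}$ would be finite, contradicting the previous sentence. Therefore $N=\{0\}$ and all conjugates $(a^2)^{y^{j}}$, $j\in\Z$, are distinct, which is the last ingredient needed for Proposition~\ref{prop:L_2}.

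I do not anticipate a genuine difficulty: once Lemma~\ref{lem:sections_of_conjugates} is available the argument is pure bookkeeping. The one spot that needs a touch of care is the lower bound on the antidepth --- one must verify that the nontrivial section genuinely survives at level $12i-3$ and invoke the monotonicity of the condition ``all level-$k$ sections equal $s$'' in $k$, rather than reading the lower bound off the antidepth of $q$ alone.
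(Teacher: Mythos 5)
Your proposal is correct and follows essentially the same route as the paper: the upper and lower bounds on the antidepth are read off from Lemma~\ref{lem:sections_of_conjugates} together with the spherical homogeneity of the conjugates and the fact that $q$ has antidepth $6$, and the distinctness of all $(a^2)^{y^j}$ is deduced from the same periodicity/pigeonhole argument (the paper phrases it as ``at most $|i-j|$ distinct conjugates,'' you phrase it via the subgroup $N\le\Z$). The only difference is that you spell out the bookkeeping for the lower bound, including the monotonicity of the condition ``all level-$k$ sections equal $s$,'' which the paper leaves implicit.
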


\begin{proof}
The first part immediately follows from Lemma~\ref{lem:sections_of_conjugates} and the fact that $(a^2)^{y^i}$ is spherically homogeneous by Lemma~\ref{lem:seq_conj}. Furthermore, if $(a^2)^{y^i}=(a^2)^{y^j}$ for some $i\neq j$, then there could be at most $|i-j|$ different conjugates of $a^2$ by powers of $y$, which contradicts to the first part.
\end{proof}

Now we have all the ingredients to prove Proposition~\ref{prop:L_2}.

\begin{proof}[Proof of Proposition~\ref{prop:L_2}]
We have already shown above that $(a^2)^{y^i}$ and $(yx)^{y^i}$, $i\in\Z$ all commute and have order 2. As was already mentioned, it was proved in~\cite{bondarenko_gkmnss:full_clas32_short} (automaton 891) that $L=\langle x,y\rangle$ is isomorphic to the lamplighter group and that $(yx)^{y^i}$ are all different and finitary. Corollary~\ref{cor:antidepth} guarantees that $(a^2)^{y^i}$ are distinct for all $i\in\Z$. So it remains to show that $(a^2)^{y^i}$ is not in $L$ for each $i\in\Z$. Since for each $i$ the order of $(a^2)^{y^i}$ is 2 (because it is a spherically homogeneous automorphism), this element could potentially be equal only to an element of the base group in $L$ isomorphic to $\oplus_{\Z}\Z/2\Z$ (because these are the only elements in the lamplighter group of order $2$), i.e., an element of the form $(yx)^{y^j}$. But, as indicated above, this is not possible since in this case $a^2$ would be finitary, which is not the case.

Thus, the group $\langle (a^2)^{y^i},(yx)^{y^j},\ i,j\in\Z\rangle$ is isomorphic to the infinite direct product of countably many copies of $(\Z/2\Z)^2$. The infinite cyclic group $\langle y\rangle$ acts on this product by conjugation, that corresponds to simply shifting the exponent of $y$. Consequently, the group $H=\langle a^2,x,y\rangle$ has a structure of the rank 2 lamplighter group
\[H\cong\LL_{2,2}\cong(\Z/2\Z)^2\wr \Z.\]
\end{proof}

Now we can proceed to the proof of the main theorem of this subsection.

\begin{proof}[Proof of Theorem~\ref{thm:structure2193}]
First of all, note that since metabelian group $H$ is a normal subgroup of index 2 in $G$, the group $G$ itself has a derived length at most 3. On the other hand, since $[[a,b],[a,c]]\neq 1$:
\begin{verbatim}
gap> IsOne(Comm(Comm(a,b),Comm(a,c)));
false
\end{verbatim}
the group $G$ cannot be metabelian and hence has derived length 3.

Recall that $G=\langle H, a\rangle$, the element $a$ has order 4 and $a^2\in H$. Therefore $G$ is not a semidirect product of $H$ and $\langle a\rangle$. However, the element $t=ax^{-1}=ac^{-1}a$ has order 2 and is certainly not in $H$ as $a\notin H$ and $x\in H$. Therefore,
\[G=H\rtimes\langle t\rangle\cong \bigl((\Z/2\Z)^2\wr \Z\bigr)\rtimes (\Z/2\Z),\]
where the action of $t$ on generators of $H$ is defined as
\begin{equation}
\label{eqn:conj_by_t}
\begin{array}{l}
x^t=(x^a)^{x^{-1}}=(x^{-1}a^2)^{x^{-1}}=a^2x^{-1},\\
y^t=(y^a)^{x^{-1}}=(a^2y^{-1})^{x^{-1}}=xa^2y^{-1}x^{-1},\\
(a^2)^{t}=\bigl((a^2)^{a}\bigr)^{x^{-1}}= (a^2)^{x^{-1}}=xa^2x^{-1},\\
\end{array}
\end{equation}
as follows from Equation~\eqref{eqn:conj_by_a}. Taking into account that $b^{-1}c=yx$ and $b^{-1}a=y$ produces equalities~\eqref{eqn:conj_by_t_thm}.

To get a presentation for $G$, we start from a presentation of $G$ coming from its structural description described above. Let $\xi=a^2$, $\eta=yx=b^{-1}c$, $y=b^{-1}a$ and $t=ac^{-1}a$ be the generators of $G$. Then $\LL_{2,2}=\langle\xi,\eta,y\rangle\lhd G$ has the following presentation as a rank 2 lamplighter group:
\[\LL_{2,2}\cong\langle\xi,\eta,y\ |\ \xi^2=\eta^2=1, [\xi,\xi^{y^i}]=[\xi,\eta^{y^i}]=[\eta,\eta^{y^i}]=1,\ i\ge1\rangle.\]
The action of $t$ on generators of $\LL_{2,2}$ follows from equations~\eqref{eqn:conj_by_t} and the identity $x=y^{-1}\eta$.

\begin{align}
&\xi^t=(a^2)^t=xa^2x^{-1}=y^{-1}\eta\xi\eta^{-1}y,\label{eqn:xi_t}\\
&\eta^t=(yx)^t=xa^2y^{-1}x^{-1}\cdot a^2x^{-1}=y^{-1}\eta\xi y^{-1}\eta^{-1}y\cdot\xi\eta^{-1}y,\label{eqn:eta_t}\\
&y^t=xa^2y^{-1}x^{-1}=y^{-1}\eta\xi y^{-1}\eta^{-1}y\label{eqn:y_t}.
\end{align}
Therefore the presentation for $G$ with respect to generators $\xi$, $\eta$, $y$ and $t$ is
\begin{multline}
\label{eqn:presentation2193_1}
G=\langle\xi,\eta,y,t\ |\ \xi^2=\eta^2=1, [\xi,\xi^{y^i}]=[\xi,\eta^{y^i}]=[\eta,\eta^{y^i}]=1,\ i\ge1,\\
t^2=1,\qquad \xi^t=y^{-1}\eta\xi\eta^{-1}y,\\
\eta^t=y^{-1}\eta\xi y^{-1}\eta^{-1}y\xi\eta^{-1}y, \qquad y^t=y^{-1}\eta\xi y^{-1}\eta^{-1}y\rangle.
\end{multline}
To finish the proof we only need to rewrite presentation~\eqref{eqn:presentation2193_1} in terms of generators $a$, $b$ and $c$. The relation in the first line of~\eqref{eqn:presentation2193_1} are rewritten simply by substituting $\xi=a^2$, $\eta=b^{-1}c$, $y=b^{-1}a$. These relations correspond precisely to the relations in the first two lines in the presentation~\eqref{eqn:presentation2193}.

The relation $t^2=(ac^{-1}a)^2=1$ is equivalent to
\begin{equation}
\label{eqn:rel1}
(ca^2)^2=1
\end{equation}
taking into account that $a^4=1$.

Further, relation~\eqref{eqn:xi_t} yields
\[(a^2)^{ac^{-1}a}=a^{-1}b\cdot b^{-1}c\cdot a^2\cdot c^{-1}a=a^{-1}ca^2c^{-1}a,\]
that trivially holds in a free group.

Relation~\eqref{eqn:y_t} is equivalent to
\[(b^{-1}a)^{ac^{-1}a}=a^{-1}b\cdot b^{-1}c\cdot a^2\cdot a^{-1}b\cdot c^{-1}b\cdot b^{-1}a=a^{-1}ca^{-1}\cdot a^2ba^{-1}\cdot ac^{-1}a,\]
that simplifies to $b^{-1}a=a^2ba^{-1}$ or, equivalently, to
\begin{equation}
\label{eqn:rel2}
(ba^2)^2=1.
\end{equation}

Finally, relation~\eqref{eqn:eta_t} is equivalent to
\begin{multline*}
(b^{-1}c)^{ac^{-1}a}=a^{-1}b\cdot b^{-1}c\cdot a^2\cdot a^{-1}b\cdot c^{-1}b\cdot b^{-1}a\cdot a^2\cdot c^{-1}b\cdot b^{-1}a=\\a^{-1}ca^{-1}\cdot a^2bc^{-1}a^2\cdot ac^{-1}a,
\end{multline*}
which again simplifies to $bc^{-1}=a^2bc^{-1}a^2$ and now follows trivially from relations~\eqref{eqn:rel1} and~\eqref{eqn:rel2}. This finishes the proof of the theorem.
\end{proof}

\begin{proposition}
\label{prop:auto2193}
The automorphism $\zeta$ of a free group $F(a,b,c)$ defined by $\zeta(a)=a$, $\zeta(b)=c$, $\zeta(c)=b$ induces an automorphism of $G$.
\end{proposition}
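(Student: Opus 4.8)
The strategy is to exploit the Hopfian property. Since $\zeta$ merely interchanges two of the three generators $a,b,c$ of $G$ and fixes the third, any endomorphism of $G$ that it induces is automatically surjective; by Lemma~\ref{lem:hopf} such an endomorphism is an automorphism. Hence the only thing to check is that $\zeta$ is \emph{well defined} on $G$, that is, that $\zeta$ carries every defining relator of the presentation~\eqref{eqn:presentation2193} to the identity of $G$.

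First I would dispose of the finite part of the relator set. We have $\zeta(a^4)=a^4=1$ in $G$; the relators $(ba^2)^2$ and $(ca^2)^2$ are exchanged by $\zeta$ (because $\zeta(a^2)=a^2$), so each is carried to the other, which is trivial in $G$; and for $(b^{-1}c)^2$ we note $\zeta(b^{-1}c)=c^{-1}b=(b^{-1}c)^{-1}$, so $\zeta\bigl((b^{-1}c)^2\bigr)=(c^{-1}b)^2=\bigl((b^{-1}c)^2\bigr)^{-1}=1$ in $G$.

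The substantive part is the three infinite families of commutator relators $[a^2,(a^2)^{(b^{-1}a)^i}]$, $[a^2,(b^{-1}c)^{(b^{-1}a)^i}]$ and $[b^{-1}c,(b^{-1}c)^{(b^{-1}a)^i}]$ for $i\ge 1$. Here I would use that in $G$ we have $\zeta(a^2)=a^2$, $\zeta(b^{-1}c)=(b^{-1}c)^{-1}$, and $\zeta(b^{-1}a)=c^{-1}a=x^{-1}$, where $x=a^{-1}c$. Consequently $\zeta$ sends each of these commutators into the subgroup of $G$ generated by $a^2$, by $b^{-1}c=yx$, and by their conjugates by powers of $x$. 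Now $a^2=[1,\sigma,1,\sigma,\ldots]$ and $yx=[\sigma,1,1,1,\ldots]$ are spherically homogeneous by~\eqref{eqn:a2spherhomo} and~\eqref{eqn:ABinv}, and conjugation by $x^{\pm 1}$ is an instance of a generalized elementary conjugation, so Lemma~\ref{lem:seq_conj} shows that every conjugate of $a^2$ or of $b^{-1}c$ by a power of $x$ is again in $\SHAut(X^*)$. Since $\SHAut(X^*)$ is abelian, each of the three commutators is sent to the identity of $G$. This exhausts the relators of~\eqref{eqn:presentation2193}, so $\zeta$ induces a surjective endomorphism of $G$, which by Lemma~\ref{lem:hopf} is an automorphism.

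The one mildly delicate point — and I expect it to be the only real obstacle — is that after applying $\zeta$ the conjugating element $b^{-1}a=y$ turns into $c^{-1}a=x^{-1}$ rather than into a power of $y$, so one cannot directly reuse the lamplighter relations of $H$ supplied by Proposition~\ref{prop:L_2}; instead one must re-interpret all the relevant elements inside the abelian group $\SHAut(X^*)$ and appeal to Lemma~\ref{lem:seq_conj}. Alternatively, each required identity can be verified mechanically in \verb"AutomGrp" on sufficiently many levels of the tree, but the spherical-homogeneity argument has the advantage of dispatching the whole infinite family of commutator relators at once.
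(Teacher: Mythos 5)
Your proof is correct, and the finite relators and the surjectivity/Hopf step match the paper's treatment; the interesting divergence is in how you kill the three infinite families of commutator relators. The paper stays entirely at the level of the presentation~\eqref{eqn:presentation2193}: it observes that $c^{-1}a=(c^{-1}b)(b^{-1}a)$ and that $c^{-1}b=(b^{-1}c)^{-1}$ already commutes (by the second-line relators) with all conjugates of $a^2$ and $b^{-1}c$ by powers of $b^{-1}a$, and then proves by induction that $(a^2)^{(c^{-1}a)^i}=(a^2)^{(b^{-1}a)^i}$ and $(b^{-1}c)^{(c^{-1}a)^i}=(b^{-1}c)^{(b^{-1}a)^i}$, so each image of a relator is literally one of the original relators. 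You instead pass to the concrete realization of $G$ in $\Aut(T_2)$: since $\zeta(b^{-1}a)=c^{-1}a=x^{-1}$, the images of the commutators live in the subgroup generated by the spherically homogeneous elements $a^2$ and $yx$ together with their conjugates by powers of $x$, which by Lemma~\ref{lem:seq_conj} remain in the abelian group $\SHAut(X^*)$, so the commutators vanish as tree automorphisms and hence in $G$. Both arguments are complete. The paper's version is purely presentation-theoretic and derives the new identities formally from the old relators; yours reuses the spherical-homogeneity machinery of Subsection~\ref{ssec:2193} and dispatches all three families in one stroke, at the (harmless) cost of invoking the faithful tree action rather than the presentation alone.
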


\begin{proof}
It is obvious that the images of relators in the first and the third lines of presentation~\eqref{eqn:presentation2193} of $G$ under $\zeta$ are again relators in $G$. To see that $\zeta$ sends relators in the second line of~\eqref{eqn:presentation2193} to the identity element of $G$ it is enough to notice that $c^{-1}a=(c^{-1}b)\cdot (b^{-1}a)$ and that $c^{-1}b=(b^{-1}c)^{-1}$ commutes with conjugates of $a^2$ and $(b^{-1}c)$ by powers of $b^{-1}a$. Indeed, we first prove by induction that
\[(a^2)^{(c^{-1}a)^i}=(a^2)^{(b^{-1}a)^i}\]
for all $i\ge0$. For $i=0$ there is nothing to prove; the induction step is proved as follows:
\[(a^2)^{(c^{-1}a)^{i+1}}=\bigl((a^2)^{(c^{-1}a)^{i}}\bigr)^{c^{-1}a}=\left(\bigl((a^2)^{(b^{-1}a)^{i}}\bigr)^{c^{-1}b}\right)^ {b^{-1}a}=(a^2)^{(b^{-1}a)^{i+1}}.\]
The same argument is also used to show that for all $i\ge1$
\[(b^{-1}c)^{(c^{-1}a)^i}=(b^{-1}c)^{(b^{-1}a)^i}.\]

Therefore, for the relators in the second line of~\eqref{eqn:presentation2193} we have:
\[
\begin{array}{l}
\zeta\left(\left[a^2,(a^2)^{(b^{-1}a)^i}\right]\right)=\left[a^2,(a^2)^{(c^{-1}a)^i}\right]=\left[a^2,(a^2)^{(b^{-1}a)^i}\right]=1\\
\zeta\left(\left[a^2,(b^{-1}c)^{(b^{-1}a)^i}\right]\right)=\left[a^2,(b^{-1}c)^{(c^{-1}a)^i}\right]=\left[a^2,(b^{-1}c)^{(b^{-1}a)^i}\right]=1\\
\zeta\left(\left[b^{-1}c,(b^{-1}c)^{(b^{-1}a)^i}\right]\right)=\left[b^{-1}c,(b^{-1}c)^{(c^{-1}a)^i}\right]=\left[b^{-1}c,(b^{-1}c)^{(b^{-1}a)^i}\right]=1
\end{array}
\]

Therefore, $\zeta$ induces an endomorphism of $G$. This endomorphism is obviously onto and also one-to-one since $\zeta$ is an involution.
\end{proof}

\begin{corollary}
\label{cor:essfree2193}
The group $G$ acts essentially freely on the boundary of the tree.
\end{corollary}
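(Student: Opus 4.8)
The plan is to obtain the corollary from Proposition~\ref{prop:ess_free_aut}, using the automorphism $\zeta$ of $F(a,b,c)$ furnished by Proposition~\ref{prop:auto2193}. The one thing that has to be done is to compute the Mikhailova system of $\A_{2193}$ and recognize it as being of diagonal type with associated free-group automorphism $\phi=\zeta$. I would start with the Reidemeister--Schreier procedure for $\St_G(1)$ using the transversal $\{1,a\}$ (note that $a$ and $b$ act as $\sigma$ on the first level while $c$ fixes it), which yields the generators $c,\ ba^{-1},\ a^2,\ ab,\ aca^{-1}$, and then compute their decompositions through the wreath recursion of $F_A=F(a,b,c)$:
\[
c=(a,a),\quad ba^{-1}=(ac^{-1},ab^{-1}),\quad a^2=(cb,bc),\quad ab=(ca,ba),\quad aca^{-1}=(cac^{-1},bab^{-1}).
\]
Each of these pairs has the shape $(w,\zeta(w))$, and the first coordinates $a,ac^{-1},cb,ca,cac^{-1}$ already generate all of $F_A$; hence the image of $\St_{F_A}(1)$ under the wreath recursion is exactly the graph $\{(w,\zeta(w)):w\in F_A\}$ of $\zeta$. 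In particular $G$ is self-replicating (the first coordinates generate $F_A$), and when this system is put into the form~\eqref{eqn:mikh1} by Nielsen reduction on the first coordinates, any pair with trivial first coordinate automatically has trivial second coordinate; so the $(1,r_i)$ part consists of genuinely trivial pairs and the associated automorphism is $\phi=\zeta$ (the generator with first coordinate $a$ has second coordinate $\zeta(a)=a$, the one with first coordinate $b$ has second coordinate $\zeta(b)=c$, and the one with first coordinate $c$ has second coordinate $\zeta(c)=b$). Thus the presentation of $G$ by $\A_{2193}$ is of diagonal type.

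Given this, the corollary follows at once: by Proposition~\ref{prop:auto2193} the map $\zeta$ induces an automorphism of $G$, so Proposition~\ref{prop:ess_free_aut} says precisely that the action of $G$ on $\partial T_2$ is essentially free. Alternatively one can spell this out via Proposition~\ref{prop:rist}: under the identification above, an element of $\Rist_G(0)$ is the image in $G$ of a pair $(1,\zeta(g))$ with $g\in\ker(F_A\twoheadrightarrow G)$, and since $\zeta$ preserves this kernel by Proposition~\ref{prop:auto2193} such an element is trivial; the same argument applies at the other first-level vertex, so $\Rist_G(1)=1$ and Proposition~\ref{prop:rist} gives the conclusion.

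At this point nothing substantial remains; the genuine content is all in Proposition~\ref{prop:auto2193} and Theorem~\ref{thm:structure2193}, and the only step that demanded real care is the verification, carried out there, that $\zeta$ respects the relators of $G$. The nontrivial relators are the lamplighter-type commutator relators of the base group $\LL_{2,2}$ in the presentation~\eqref{eqn:presentation2193}, and they are handled exactly as in the lamplighter computations for automata 821 and 891, via the inductive identities $(a^2)^{(c^{-1}a)^i}=(a^2)^{(b^{-1}a)^i}$ and $(b^{-1}c)^{(c^{-1}a)^i}=(b^{-1}c)^{(b^{-1}a)^i}$. The wreath-recursion bookkeeping in the first paragraph is routine and can also be checked with the \verb"AutomGrp" package.
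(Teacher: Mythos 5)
Your proposal is correct and follows essentially the same route as the paper: both exhibit the first-level stabilizer as a diagonal-type Mikhailova system whose associated free-group automorphism is $\zeta\colon a\mapsto a,\ b\mapsto c,\ c\mapsto b$, and then combine Proposition~\ref{prop:auto2193} with Proposition~\ref{prop:ess_free_aut}. The only cosmetic difference is that you work with the unreduced five-element Reidemeister--Schreier generating set (all of whose wreath decompositions indeed lie on the graph of $\zeta$, as your computations correctly show), whereas the paper records the already-reduced three-element system $(a,a)$, $(b,c)$, $(c,b)$.
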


\begin{proof}
The stabilizer of the first level in $G$ is generated by
\[\begin{array}{lcl}
b^{-2}cbcb^{-1}c&=&(a,a),\\
cb^{-1}a&=&(b,c),\\
ac^{-1}b^{-1}c^2&=&(c,b).
\end{array}\]

In this situation Proposition~\ref{prop:auto2193} guarantees that we can apply Proposition~\ref{prop:ess_free_aut} and deduce that the action of $G$ on the boundary of the tree is essentially free.
\end{proof}

We end up this section with the following interesting observations.

\begin{proposition}
\label{prop:finitary_seq}
The group $A=\langle (yx)^{y^i},\ i\in\Z\rangle$ coincides with a group of all finitary spherically homogeneous automorphisms.
\end{proposition}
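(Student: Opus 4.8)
The plan is to prove the two inclusions $A\subseteq\mathcal F$ and $\mathcal F\subseteq A$, where $\mathcal F\leq\Aut(T_2)$ denotes the group of \emph{finitary} spherically homogeneous automorphisms, i.e. those $[\sigma_1,\sigma_2,\ldots]$ with only finitely many $\sigma_i$ nontrivial. Let $e_n$ be the spherically homogeneous automorphism acting as $\sigma$ exactly on the $n$-th letter of the input word; the $e_n$ commute and are involutions, $\mathcal F=\langle e_n:n\geq1\rangle$, and $\mathcal F=\bigcup_n V_n$ with $V_n=\langle e_1,\ldots,e_n\rangle$. Write $D$ for the level-shift $D([\sigma_1,\sigma_2,\ldots])=[1,\sigma_1,\sigma_2,\ldots]$ on $\SHAut(X^*)$, so $D(e_n)=e_{n+1}$ and, in wreath-recursion form, $D(g)=(g,g)$. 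The inclusion $A\subseteq\mathcal F$ is immediate: each $(yx)^{y^i}$ is a generalized conjugation of the spherically homogeneous element $yx=[\sigma,1,1,\ldots]$ of~\eqref{eqn:ABinv}, hence lies in $\SHAut(X^*)$ by Lemma~\ref{lem:seq_conj}, and it is finitary by~\cite{bondarenko_gkmnss:full_clas32_short} (this also drops out of the computation below). So the real content is $\mathcal F\subseteq A$, and it suffices to show $e_n\in A$ for every $n\geq1$.

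For this I would introduce $C(g):=g^y=y^{-1}gy$ and record three facts. First, $A$ is invariant under conjugation by $y^{\pm1}$, since the generators $\tau_i:=(yx)^{y^i}$ of $A$ satisfy $C(\tau_i)=\tau_{i+1}$ and $C^{-1}(\tau_i)=\tau_{i-1}$. Second, by~\eqref{eqn:ABinv},
\[
C(e_1)=(yx)^y=xy=e_1e_2 .
\]
Third --- the only computational input --- the shift $D$ and conjugation by $y$ satisfy the ``twisted commutation'' relations
\[
C\bigl(D(g)\bigr)=D\bigl(C^{-1}(g)\bigr),\qquad C^{-1}\bigl(D(g)\bigr)=D\bigl(C(g)\bigr)\qquad(g\in\SHAut(X^*)).
\]
To see the first one I would expand $y^{-1}(g,g)\,y$ using $y=(y^{-1},x)$, getting $(ygy^{-1},\,x^{-1}gx)$, and then invoke~\eqref{eqn:conj1} (valid since $g\in\SHAut(X^*)$), which gives $x^{-1}gx=ygy^{-1}=C^{-1}(g)$; hence $y^{-1}(g,g)y=(C^{-1}(g),C^{-1}(g))=D(C^{-1}(g))$. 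The second relation then follows formally: applying the first with $g$ replaced by $C(g)$ gives $C(D(C(g)))=D(g)$, and applying $C^{-1}$ yields $D(C(g))=C^{-1}(D(g))$.

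With these in hand the argument is a clean induction on $m\geq1$ of the statement
\[
R(m):\quad e_m\in A\ \text{ and }\ C^{\varepsilon(m)}(e_m)=e_me_{m+1},\quad\text{where }\varepsilon(m)=+1\text{ if }m\text{ is odd and }-1\text{ if }m\text{ is even.}
\]
The base case $R(1)$ holds because $e_1=yx=\tau_0\in A$ and $C(e_1)=e_1e_2$. For the inductive step, assume $R(m)$. Since $e_m\in A$ and $A$ is closed under $C^{\pm1}$, the element $C^{\varepsilon(m)}(e_m)=e_me_{m+1}$ lies in $A$, so $e_{m+1}=e_m\cdot(e_me_{m+1})\in A$; and since $e_{m+1}=D(e_m)$ and $\varepsilon(m+1)=-\varepsilon(m)$, the twisted commutation gives
\[
C^{\varepsilon(m+1)}(e_{m+1})=C^{-\varepsilon(m)}\bigl(D(e_m)\bigr)=D\bigl(C^{\varepsilon(m)}(e_m)\bigr)=D(e_me_{m+1})=e_{m+1}e_{m+2},
\]
which is $R(m+1)$. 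Hence $e_n\in A$ for all $n$, so $\mathcal F\subseteq A$, and therefore $A=\mathcal F$.

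The whole proof is essentially formal once the twisted commutation relations are set up, so the main --- and rather mild --- obstacle is getting those identities exactly right: this is where the particular wreath recursion $x=(y,x^{-1})\sigma$, $y=(y^{-1},x)$ for $L$ enters, and one must be careful not to drop a $\sigma$ or invert a section when expanding $y^{\mp1}(g,g)y^{\pm1}$. As a byproduct, $C(e_1)=e_1e_2$ and $C^{-1}(e_1)=e_1e_2e_3$ (the latter obtained directly from~\eqref{eqn:conj1} by expanding $x^{-1}e_1x$), together with the twisted commutation, show that every $\tau_i$ is a finite product of the $e_n$, which re-derives the finitariness of the conjugates of $yx$ by powers of $y$ used in the first paragraph.
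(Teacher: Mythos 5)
Your proof is correct, but it takes a genuinely different route from the paper's. The paper's proof is a two-line citation argument: it quotes from \cite{bondarenko_gkmnss:full_clas32_short} the exact depths of the conjugates $s_n=(yx)^{y^{-n}}$ (namely $2n+1$ for $n\ge0$ and $2(-n)$ for $n<0$, so every positive integer occurs exactly once as a depth) and then concludes ``by induction on the level'': the $k$ conjugates of depths $1,\dots,k$ are independent involutions and hence generate the full group $(\Z/2\Z)^k$ of finitary spherically homogeneous automorphisms of depth at most $k$. You instead construct the elementary generators $e_n$ inside $A$ directly, via the twisted commutation $C(D(g))=D(C^{-1}(g))$ between the diagonal shift and conjugation by $y$, with the alternating sign $\varepsilon(m)$ tracking the level parity already visible in Lemma~\ref{lem:seq_conj}. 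I checked the computational inputs: $y^{-1}(g,g)y=(ygy^{-1},x^{-1}gx)$ is right, the appeal to \eqref{eqn:conj1} is legitimate (and, in the formal derivation of the second relation, the needed fact that $C(g)\in\SHAut(X^*)$ is covered by Lemma~\ref{lem:seq_conj}), and the base case $C(e_1)=xy=e_1e_2$ matches \eqref{eqn:ABinv}. What your approach buys is self-containedness: you do not need the precise depth formula from the external reference, only the machinery already set up in this subsection, and as you note it re-derives the finitariness of the $(yx)^{y^i}$ as a byproduct. What the paper's approach buys is brevity, at the cost of leaning on \cite{bondarenko_gkmnss:full_clas32_short} and leaving the ``induction on the level'' implicit. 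Either argument is acceptable; yours is longer but arguably more transparent.
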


\begin{proof}
It is proved in~\cite{bondarenko_gkmnss:full_clas32_short} (see automaton 891) that all elements of the form $s_n=(yx)^{y^{-n}}$ are finitary spherically homogeneous automorphisms with depth $2n+1$ for nonnegative $n$ and $2(-n)$ for negative $n$. The propositions now immediately follows by induction on the level.
\end{proof}

\begin{proposition}~\\[-5mm]
\begin{itemize}
\item[(a)]
The subgroup $L=\langle x,y\rangle$ of $G$ has infinite index in $G$.
\item[(b)]
The closure $\bar L$ of $L$ has index 2 in the closure $\bar G$ of $G$.
\end{itemize}
\end{proposition}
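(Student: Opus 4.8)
For part~(a) the plan is to push everything through the isomorphism $G\cong\LL_{2,2}\rtimes(\Z/2\Z)$ of Theorem~\ref{thm:structure2193}. Since $x=a^{-1}c$ and $y=b^{-1}a$ we have $yx=b^{-1}c$, hence $L=\langle x,y\rangle=\langle b^{-1}c,b^{-1}a\rangle$. Under the isomorphism of Theorem~\ref{thm:structure2193} the generator $b^{-1}c$ goes to the generator of the second $\Z/2\Z$ factor of the base group $(\Z/2\Z)^2$ of $\LL_{2,2}$, while $b^{-1}a$ goes to the generator of $\Z$; thus $L$ is carried onto the ``coordinate'' sub-lamplighter $(\Z/2\Z)\wr\Z\le(\Z/2\Z)^2\wr\Z=\LL_{2,2}$, whose base group $\bigoplus_{\Z}\Z/2\Z$ has infinite index in the base group $\bigoplus_{\Z}(\Z/2\Z)^2$ of $\LL_{2,2}$. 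Hence $[H:L]=\infty$ (with $H=\langle a^2,x,y\rangle$), and since $[G:H]=2$ we conclude $[G:L]=\infty$.

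For part~(b), write $G^{(n)},H^{(n)},L^{(n)}$ for the images in $\Aut(T_2^{(n)})$, so that $\bar G=\varprojlim G^{(n)}$, similarly for $H$ and $L$, and $[\bar G:\bar L]=\sup_n[G^{(n)}:L^{(n)}]$ (the sequence is non-decreasing in $n$). I would split the proof into two claims. \emph{Claim 1: $[\bar G:\bar H]=2$.} Indeed $[G^{(n)}:H^{(n)}]\le[G:H]=2$ for all $n$, while $a\notin H^{(3)}$ — this is exactly the computation used in the proof that $a\notin H$ — so the index equals $2$ for all $n\ge3$. \emph{Claim 2: $\bar L=\bar H$.} This is equivalent to $a^2\in\bar L$: if $a^2\in\bar L$ then $\bar L$ is a closed subgroup containing $L$ and $a^2$, hence containing $H=\langle L,a^2\rangle$ and therefore $\bar H$, whereas $\bar L\subseteq\bar H$ is trivial. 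Granting both claims, $[\bar G:\bar L]=[\bar G:\bar H]=2$.

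The whole of part~(b) thus reduces to the assertion $a^2\in\bar L$, and here is the argument I would give. By Lemma~\ref{lem:seq_conj} the group $L$ normalizes $\SHAut(X^*)$, and $\sigma=yx\in\SHAut(X^*)$ by~\eqref{eqn:ABinv}; consequently the base group $B_L=\langle(yx)^{y^i}:i\in\Z\rangle$ of the lamplighter $L$ lies in $\SHAut(X^*)\cong\prod_{n\ge1}\Z/2\Z$. Writing an element of $\SHAut(X^*)$ as its sequence $[\sigma_1,\sigma_2,\dots]$ of coordinates, a finitary spherically homogeneous automorphism of depth $d$ is precisely one whose last nonzero coordinate lies in position $d$. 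By the facts about automaton $891$ recalled in the proof of Proposition~\ref{prop:finitary_seq} (and established in~\cite{bondarenko_gkmnss:full_clas32_short}), the element $s_n=(yx)^{y^{-n}}$ is finitary spherically homogeneous of depth $2n+1$ for $n\ge0$ and of depth $2(-n)$ for $n<0$; as $n$ ranges over $\Z$ these depths range bijectively over $\{1,2,3,\dots\}$. Hence, after reindexing the coordinate vectors $\{s_n:n\in\Z\}$ so that the $m$-th one has its last nonzero coordinate in position $m$, the resulting family is in row-echelon form over $\mathbb F_2$ with a pivot in every column; restricting to the first $m$ coordinates gives an invertible lower-triangular $m\times m$ matrix, so $B_L$ surjects onto the image of $\SHAut(X^*)$ on level $m$, namely $(\Z/2\Z)^m$, for every $m$. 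Therefore $\overline{B_L}=\SHAut(X^*)$. Finally $a^2=[1,\sigma,1,\sigma,\dots]\in\SHAut(X^*)$ by~\eqref{eqn:a2spherhomo}, so $a^2\in\overline{B_L}\subseteq\bar L$, which completes Claim~2.

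The delicate point — and the one I expect to be the main obstacle — is exactly this crux $a^2\in\bar L$: although $L$ has infinite index in $H$, its closure must fill up all of $\bar H$, and the mechanism is the non-obvious fact (about automaton $891$, borrowed from~\cite{bondarenko_gkmnss:full_clas32_short}) that the depths of the finitary lamps $(yx)^{y^i}$ exhaust $\mathbb N$ without repetition, together with some care about the exact conventions for ``depth'' and for the identification $\SHAut(X^*)\cong\prod\Z/2\Z$. Everything else — the index computation in~(a), the bound $[G^{(n)}:H^{(n)}]\le2$, and the stabilization of these indices at level~$3$ — is routine.
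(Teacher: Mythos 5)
Your proof is correct, and the two halves compare differently with the paper's argument. For part (b) you follow essentially the paper's route: the paper also reduces everything to showing $a^2\in\bar L$, which it obtains from Proposition~\ref{prop:finitary_seq} (the statement that $A=\langle (yx)^{y^i},\ i\in\Z\rangle$ is exactly the group of all finitary spherically homogeneous automorphisms, whence $\bar A=\SHAut(X^*)\ni a^2$), and then concludes $\bar L=\bar H$ and $[\bar G:\bar H]=2$ by the same level-$3$ computation you use; your row-echelon argument over $\mathbb F_2$ is an explicit write-up of the ``induction on the level'' that the paper leaves implicit in the proof of Proposition~\ref{prop:finitary_seq}, so the only real difference is that you prove the density statement rather than cite it. For part (a) your argument is genuinely different: the paper invokes Theorem~3.5 of~\cite{grigorch_k:lamplighter}, which constrains the isomorphism types of finite-index subgroups of $\LL_{2,2}$, and rules out $L\cong\LL$ on those grounds, whereas you locate $L$ explicitly inside $H\cong\LL_{2,2}$ as the coordinate sub-lamplighter $(\Z/2\Z)\wr\Z$ and observe that its intersection with the base group $\bigoplus_{\Z}(\Z/2\Z)^2$ already has infinite index there. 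Your version is more elementary and self-contained --- it relies only on the structural identification of Proposition~\ref{prop:L_2}, which guarantees that the elements $(a^2)^{y^i}$ and $(yx)^{y^j}$ are independent, rather than on an external classification of subgroups of the rank-$2$ lamplighter --- at the modest cost of having to note that $B\cap L$ is exactly the base group of $L$, which is immediate since both equal the kernel of the restriction to $L$ of the projection $H\to\Z$.
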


\begin{proof}
(a) According to Theorem 3.5 in~\cite{grigorch_k:lamplighter} each subgroup of $\LL_{2,2}\cong H$ of finite index must be isomorphic to $\LL_{2,s}$ for some $s\geq 1$. Since $L=\langle x,y\rangle$ is isomorphic to a standard lamplighter group $\LL$, it cannot have a finite index in $H$, and thus in $G$.

(b) By proposition~\ref{prop:finitary_seq} the group of all spherically homogeneous automorphisms coincides with the closure $\bar A$ of $A$, where $A$ is from Proposition~\ref{prop:finitary_seq}. Thus, as by Equality~\eqref{eqn:a2spherhomo} $a^2$ is spherically homogeneous, $a^2\in\bar A<\bar L$. Therefore, $H=\langle a^2,x,y\rangle<\bar L$ and $\bar H<\bar L$. On the other hand, $L<H$ and so $\bar L<\bar H$ and $\bar L=\bar H$. Since $H$ has index 2 in $G$, $\bar L=\bar H$ has index at most 2 in $\bar G$. Finally, since $a$ induces a permutation of the third level of the tree that does not belong to the permutation group on this level induced by $H$, we must have that $a\notin\bar H$. Thus, $\bar L=\bar H$ has index 2 in $\bar G$.
\end{proof}
\vspace{3mm}

\subsection{Automaton 2372}
\label{ssec:2372}
Throughout this subsection let $G$ denote the group $G_{2372}$ generated by automaton $\A_{2372}$ and defined by the following wreath recursion: $a=(b,b)\sigma, b=(c,a)\sigma, c=(c,a)$. The automaton itself is shown in Figure~\ref{fig:2372}. We start from stating the main theorem of this subsection that will be the ground for the proof of essential freeness of the action of $G$ on $\partial T_2$.

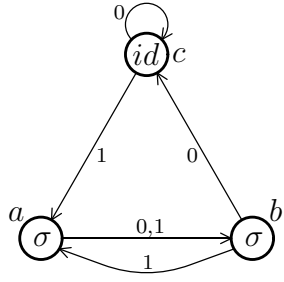
\begin{figure}
\begin{center}
\begin{picture}(1450,1090)(0,130)
\put(200,200){\circle{200}} 
\put(1200,200){\circle{200}}
\put(700,1070){\circle{200}}
\allinethickness{0.2mm} \put(45,280){$a$} \put(1280,280){$b$}
\put(820,1035){$c$}
\put(164,165){$\sigma$}  
\put(1164,165){$\sigma$}  
\put(634,1022){$id$}       
\put(300,200){\line(1,0){800}} 
\path(1050,225)(1100,200)(1050,175)   
\spline(287,150)(700,0)(1113,150)     
\path(325,109)(287,150)(343,156)      
\spline(750,983)(1150,287)     
\path(753,927)(750,983)(797,952)      
\spline(650,983)(250,287)      
\path(297,318)(250,287)(253,343)      
\put(700,1211){\arc{200}{2.36}{0.79}} 
\path(820,1168)(771,1141)(779,1196)   
\put(650,250){$_{0,1}$} 
\put(890,585){$_0$} 
\put(680,77){$_1$}   
\put(545,1261){$_0$}  
\put(460,585){$_1$}  
\end{picture}
\caption{Automaton $\A_{2372}$ generating $G_{2372}$\label{fig:2372}}
\end{center}
\end{figure}

\begin{theorem}
\label{thm:2372}
The group $G$, generated by states $a$, $b$ and $c$ of automaton $\A_{2372}$, is solvable of derived length 3 and has the following structure:
\begin{equation}
\label{eqn:presentation_txv}
G\cong BS(1,3)\rtimes (\Z/2\Z)\cong \langle t,x,v\mid t^x=t^3,\ v^2=1,\ t^v=t^{-1},\ x^v=x\rangle,
\end{equation}
where $t=ac^{-1}$ and $x=aca^{-1}$ generate the Baumslag-Solitar group $BS(1,3)$ and $v=ab^{-1}$ acts on $BS(1,3)$ by inverting $t$ and leaving $x$ fixed.

Moreover, $G$ has the following finite presentation
\begin{equation}
\label{eqn:presentation_abc}
G\cong\langle a,b,c\ |\ [a,c]=(c^{-1}a)^2,\ (ab^{-1})^2=1,\  (ca^{-1})^{b}=c^{-1}a,\ [c,b^{-1}a]=1\rangle.
\end{equation}
\end{theorem}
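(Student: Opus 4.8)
The plan is to isolate three elements that govern the structure, namely $t=ac^{-1}$, $x=aca^{-1}$ and $v=ab^{-1}$, to recognize $\langle t,x\rangle$ as a copy of $BS(1,3)$, and to see that $v$ acts on it as the claimed involution. First I would record that these elements generate $G$: inverting the defining formulas gives $a=xt$ and $c=t^{-1}xt$, and once $v^2=1$ is known, $b=vxt$, so $G=\langle a,b,c\rangle=\langle t,x,v\rangle$. Next, writing out the wreath recursions of $t$, $x$ and $v$ (a short computation, easily cross-checked with \verb"AutomGrp" in the style of the preceding subsections) I would verify in $G$ the four relations $v^{2}=1$, $x^{v}=x$, $t^{v}=t^{-1}$ and $t^{x}=t^{3}$; these are exactly the relations in the right-hand presentation in \eqref{eqn:presentation_txv}, so they yield an epimorphism $\Gamma:=\langle t,x,v\mid t^{x}=t^{3},\ v^{2}=1,\ t^{v}=t^{-1},\ x^{v}=x\rangle\cong BS(1,3)\rtimes(\Z/2\Z)\twoheadrightarrow G$. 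Along the way I would also check the two facts that do the real work: that $t$ and $x$ have infinite order in $G$, and that $v\notin\langle t,x\rangle$ (the latter, e.g., by exhibiting a level $n$ on which the permutation induced by $v$ does not lie in the finite permutation group induced by $\langle t,x\rangle$, which suffices because $G$ is residually finite and $\langle t,x\rangle$ has index at most $2$ in $G$ since it is normalized by the involution $v$).

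The core of the argument is then a short, purely group-theoretic bootstrap proving that $\Gamma\twoheadrightarrow G$ is injective. Let $N=\langle\langle t\rangle\rangle_{G}$. Conjugating $t^{x}=t^{3}$ by powers of $x$ gives $t^{x^{-i}}=(t^{x^{-i-1}})^{3}$, so the cyclic subgroups $\langle t\rangle\subseteq\langle t^{x^{-1}}\rangle\subseteq\langle t^{x^{-2}}\rangle\subseteq\cdots$ form an ascending chain in which each term has index exactly $3$ in the next — properness uses only that $t$, hence each of its conjugates, has infinite order. Their union is therefore isomorphic to $\Z[1/3]$; since $t^{v}=t^{-1}$ and $x^{v}=x$, this union is invariant under $x$ and $v$, hence normal, so $N\cong\Z[1/3]$ with $x$ acting by multiplication by $3$ and $v$ by multiplication by $-1$. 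Now $G/N$ is generated by the images of $x$ and $v$ with $\bar v^{2}=1$ and $[\bar x,\bar v]=1$, so it is a quotient of $\Z\times\Z/2\Z$; the image of $x$ has infinite order (if $x^{m}\in N$ then, being centralized by $x$, it is fixed by multiplication by $3$ on $\Z[1/3]$, forcing $x^{m}=1$), and $\bar v\notin\langle\bar x\rangle$ (this is precisely $v\notin\langle t,x\rangle$), so $G/N\cong\Z\times\Z/2\Z$. The same multiplication-by-$3$ argument gives $\langle x,v\rangle\cap N=1$, whence $G=N\rtimes\langle x,v\rangle\cong\Z[1/3]\rtimes(\Z\times\Z/2\Z)\cong BS(1,3)\rtimes(\Z/2\Z)$ with $v=ab^{-1}$ acting as stated; in particular $G$ is solvable, being an extension of the metabelian group $BS(1,3)$ by $\Z/2\Z$ (the derived length then follows by a direct check). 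Comparing this with $\Gamma$, whose copy of $BS(1,3)$ also has index $2$, shows $\Gamma\twoheadrightarrow G$ restricts to an isomorphism on $BS(1,3)$ and matches the nontrivial cosets, hence is an isomorphism.

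Finally, to obtain the finite presentation \eqref{eqn:presentation_abc} I would push the relations of \eqref{eqn:presentation_txv} back through the substitution $t=ac^{-1}$, $x=aca^{-1}$, $v=ab^{-1}$ by Tietze transformations. Each relation transforms, after a one-line free manipulation, into one of the four relations of \eqref{eqn:presentation_abc}: $v^{2}=1$ becomes $(ab^{-1})^{2}=1$; $x^{v}=x$ becomes $bcb^{-1}=aca^{-1}$, i.e.\ $[c,b^{-1}a]=1$; $t^{v}=t^{-1}$ becomes $bc^{-1}ab^{-1}=ca^{-1}$, i.e.\ $(ca^{-1})^{b}=c^{-1}a$; and $t^{x}=t^{3}$ reduces to $aca^{-1}=cac^{-1}ac^{-1}$, which is the same free-group identity as $[a,c]=(c^{-1}a)^{2}$. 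Since the substitution is invertible ($a=xt$, $b=vxt$, $c=t^{-1}xt$), the two presentations define the same group.

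I expect the main obstacle to be the injectivity step, or rather the two computational inputs it rests on — that $t$ and $x$ have infinite order in $G$, and that $v\notin\langle t,x\rangle$ — since everything else (the generation, the verification of the relations, the $\Z[1/3]$-bootstrap, and the Tietze rewriting) is routine once those are in hand.
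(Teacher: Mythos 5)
Your proposal is correct and shares the paper's skeleton --- the same generating set $\{t,x,v\}$, the same four relations verified from the wreath recursion (or with \texttt{AutomGrp}), the same computational inputs (infinite order of $t$ and $x$), and the same Tietze rewriting at the end --- but it diverges at the one step that carries the weight. The paper certifies $\langle t,x\rangle\cong BS(1,3)$ by quoting the structural fact that in every proper quotient of $BS(1,3)$ the image of at least one of the two standard generators has finite order, and then obtains $G=\langle t,x\rangle\rtimes\langle v\rangle$ essentially for free, leaving $v\notin\langle t,x\rangle$ implicit. You instead prove that fact from scratch by assembling $N=\langle\langle t\rangle\rangle$ as the union of the chain $\langle t\rangle\le\langle t^{x^{-1}}\rangle\le\cdots$ of index-$3$ inclusions, identifying $N\cong\Z[1/3]$ and $G/N\cong\Z\times\Z/2\Z$, and splitting; this is longer but self-contained and makes the $\Z[1/3]$-module structure explicit, whereas the paper's argument is shorter but rests on an unproved (though true) assertion about quotients of $BS(1,3)$. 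Your concern that $v\notin\langle t,x\rangle$ is a second hard computational input is unnecessary: once $\langle t,x\rangle\cong BS(1,3)$ is known it is torsion-free, so $v\notin\langle t,x\rangle$ follows already from $v\ne 1$ and $v^2=1$, with no need to chase level permutations. One point you should not defer to ``a direct check'': the structure you (and the paper) establish gives $G'=\langle\langle t^2\rangle\rangle\le\langle\langle t\rangle\rangle\cong\Z[1/3]$, which is abelian, so $G$ is metabelian of derived length $2$; the claim of derived length $3$ in the statement is inconsistent with presentation~\eqref{eqn:presentation_txv} itself, and the paper's own proof never addresses it either, so your direct check would in fact refute that part of the statement rather than confirm it.
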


\begin{proof}
First, note that elements $t,x$ and $v$ form another generating set for $G$ since we can express the original generators as $a=xt, b=v^{-1}xt$ and $c=x^t$. We define $G$ and generators $t$, $x$ and $v$ in \verb"AutomGrp" package by

\begin{verbatim}
gap> G:=AutomatonGroup("a=(b,b)(1,2),b=(c,a)(1,2),c=(c,a)");
< a, b, c >
gap> t:=a*c^-1;; x:=a*c*a^-1;; v:=a*b^-1;;
\end{verbatim}

Since the relation $t^x=t^3$ is satisfied as shown below, the group $B=\langle t,x\rangle$ is a homomorphic image of $BS(1,3)=\langle \alpha,\beta\mid\alpha^{\beta}=\alpha^3\rangle$. On the other hand, in each proper homomorphic image of $BS(1,3)$ at least one of the images of $\alpha$ and $\beta$ must have a finite order. Since both $t$ and $x$ have infinite order ($t$ and the section $x^2|_{000}=c^2ac^{-1}$ of $x$ at vertex $000$ act transitively on the levels of the tree as can be computed using the algorithm described, for example, in Lemma~2 of~\cite{bondarenko_gkmnss:full_clas32_short} and implemented in \verb"AutomGrp" package), we get that $B\cong BS(1,3)$.

\begin{verbatim}
gap> t^x=t^3;
true
gap> Order(t);
infinity
gap> Order(x);
infinity
\end{verbatim}

The fact that $B$ is normal in $G$ follows from the identities
\begin{equation}
\label{eqn:action_by_v}
\begin{array}{l}
t^v=t^{-1},\\x^v=x.
\end{array}
\end{equation}

\begin{verbatim}
gap> t^v=t^-1;
true
gap> x^v=x;
true
\end{verbatim}

Further, since $v^2=1$, the equalities~\eqref{eqn:action_by_v} immediately imply that $G=B\rtimes\langle v\rangle\cong BS(1,3)\rtimes(\Z/2\Z)$. Finally, by a sequence of Tietze transformations one can convert presentation~\eqref{eqn:presentation_txv} into a presentation~\eqref{eqn:presentation_abc}.
\end{proof}

Similarly to Proposition~\ref{prop:auto2193} we obtain the following:

\begin{proposition}
\label{prop:auto}
The automorphism $\eta$ of a free group $F(a,b,c)$ defined by $\eta(a)=c$, $\eta(b)=b$, $\eta(c)=a$ induces an automorphism of $G$.
\end{proposition}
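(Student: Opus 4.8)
The plan is to mimic the proof of Proposition~\ref{prop:auto2193}: work with the finite presentation~\eqref{eqn:presentation_abc} of $G$, verify that $\eta$ carries each of the four defining relators to an element that is trivial in $G$, and conclude that $\eta$ descends to an endomorphism $\bar\eta\colon G\to G$. Since $\eta$ merely transposes the generators $a$ and $c$ and fixes $b$, its image contains $a,b,c$, so $\bar\eta$ is onto; and since $\eta$ is an involution of $F(a,b,c)$, so is $\bar\eta$, whence $\bar\eta$ is an automorphism (one could also invoke Lemma~\ref{lem:hopf}). Thus everything reduces to the four relator checks.

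Two of these are immediate. The relator $[a,c]=(c^{-1}a)^2$ is sent by $\eta$ to $[c,a]=(a^{-1}c)^2$, which is just the inverse of the original relation and hence holds in $G$; similarly $(ca^{-1})^{b}=c^{-1}a$ is sent to $(ac^{-1})^{b}=a^{-1}c$, again obtained from the original by inverting both sides (using $(g^{b})^{-1}=(g^{-1})^{b}$). For the remaining two relators $(ab^{-1})^2=1$ and $[c,b^{-1}a]=1$ I would pass to the generating set $t=ac^{-1}$, $x=aca^{-1}$, $v=ab^{-1}$ of Theorem~\ref{thm:2372}, in which $G\cong BS(1,3)\rtimes\langle v\rangle$ with $t^{x}=t^{3}$, $v^{2}=1$, $t^{v}=t^{-1}$, $x^{v}=x$, and $a=xt$, $b=v^{-1}xt$, $c=x^{t}$. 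A short computation in this group (using $vt^{-1}v^{-1}=t$, $xv=vx$, and $t^{-1}x^{-1}tx=t^{2}$) gives $cb^{-1}=t^{-1}v$ and $b^{-1}c=tv$. Hence $\eta\bigl((ab^{-1})^{2}\bigr)=(cb^{-1})^{2}=(t^{-1}v)^{2}=t^{-1}(vt^{-1}v^{-1})=t^{-1}t=1$, and $\eta\bigl([c,b^{-1}a]\bigr)=[a,\,b^{-1}c]=[xt,\,tv]=t^{2}(vt^{2}v^{-1})=t^{2}t^{-2}=1$. Alternatively, all four identities can be checked mechanically with the \verb"AutomGrp" package, exactly as was done for automata 924, 2294 and 2320.

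The only step that is not entirely formal is the commutator relator $[c,b^{-1}a]=1$: in contrast to the first and third relators, its $\eta$-image $[a,b^{-1}c]=1$ is not simply the inverse of a defining relation, so one must first rewrite $a$ and $b^{-1}c$ in terms of $t,x,v$ and then carry out the conjugation computation above (or delegate it to the computer). With that in hand the proposition follows at once from the surjectivity-plus-involution argument, exactly as in Proposition~\ref{prop:auto2193}.
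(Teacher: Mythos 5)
Your proposal is correct and follows essentially the same route as the paper: it verifies that $\eta$ sends each of the four relators of presentation~\eqref{eqn:presentation_abc} to the identity (the first and third being inverses of defining relations, the second and fourth handled by rewriting in the generators $t,x,v$ of Theorem~\ref{thm:2372}), and then concludes via surjectivity and the fact that $\eta$ is an involution. Your intermediate identities $cb^{-1}=t^{-1}v$ and $b^{-1}c=tv$ check out and merely streamline the paper's direct substitution for the commutator relator.
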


\begin{proof}
First we verify that the images of relators in the presentation~\eqref{eqn:presentation_abc} of $G$ under $\eta$ are again relators in $G$.
For the relator $r_1=[a,c](c^-1a)^{-2}$ we have
\[\eta(r_1)=[c,a](a^{-1}c)^{-2}=[a,c]^{-1}(a^{-1}c)^{-2}=(c^{-1}a)^{-2}(a^{-1}c)^{-2}=1.\]
For $r_2=(ab^{-1})^2$ we compute
\[\eta(r_2)=(cb^{-1})^2=(t^{-1}v)^2=t^{-1}\left(t^{-1}\right)^v=t^{-1}t=1.\]
For $r_3=(ca^{-1})^{b}(c^{-1}a)^{-1}$ we obtain
\[\eta(r_3)=(ac^{-1})^b(a^{-1}c)^{-1}=\left((ca^{-1})^b\right)^{-1}(a^{-1}c)^{-1}=\left(c^{-1}a\right)^{-1}(a^{-1}c)^{-1}=1.\]
Finally, for $r_4=[c,b^{-1}a]$:
\begin{multline*}
\eta(r_4)=[a,b^{-1}c]=a^{-1}c^{-1}bab^{-1}c=(t^{-1}x^{-1})^2tvxtxtt^{-1}x^{-1}vt^{-1}xt\\
=(t^{-1}x^{-1})^2tx^vt^vt^{-1}xt=(t^{-1}x^{-1})^2txt^{-1}t^{-1}xt\\=t^{-1}x^{-1}t^{-1}t^xt^{-2}xt=t^{-1}x^{-1}t^{-1}t^3t^{-2}xt=1.
\end{multline*}
Therefore, $\eta$ induces an endomorphism of $G$. This endomorphism is obviously onto and also one-to-one since $\eta$ is an involution.
\end{proof}

\begin{corollary}
\label{cor:essfree2372}
The group $G$ acts essentially freely on the boundary of the tree.
\end{corollary}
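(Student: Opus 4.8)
The plan is to argue exactly as in the proof of Corollary~\ref{cor:essfree2193}: to exhibit the first-level stabilizer of $G$ in diagonal form, identify the resulting automorphism of the free group with the map $\eta$ of Proposition~\ref{prop:auto}, and then invoke Proposition~\ref{prop:ess_free_aut}.

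First I would compute $\St_G(1)$ by the Reidemeister--Schreier procedure. Since $c$ fixes the first level while $a$ and $b$ act there as $\sigma$, the Schreier transversal $\{1,a\}$ produces the five generators $ba^{-1},\, c,\, a^2,\, ab,\, aca^{-1}$, and decomposing them through the wreath recursion $a=(b,b)\sigma$, $b=(c,a)\sigma$, $c=(c,a)$ gives
\[
ba^{-1}=(cb^{-1},ab^{-1}),\quad c=(c,a),\quad a^2=(b^2,b^2),\quad ab=(ba,bc),\quad aca^{-1}=(bab^{-1},bcb^{-1}).
\]
The first coordinates $cb^{-1},c,b^2,ba,bab^{-1}$ already generate the free group $F(a,b,c)$ (recover $b$ from $c$ and $cb^{-1}$, then $a$ from $ba$), so $G$ is self-replicating and we are in the diagonal-type situation in which Proposition~\ref{prop:ess_free_aut} applies. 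Performing Nielsen reduction on the first coordinates while carrying along the second ones, the system collapses: the elements $c^{-1}\cdot(ba^{-1})\cdot(ab)=c^{-1}b^2$, $ab^{-1}c$, and $c$ of $\St_G(1)$ decompose as $(a,c)$, $(b,b)$, $(c,a)$ respectively, while the two remaining generators contribute only the trivial pair, since one checks $a^2=(ab^{-1}c)^2$ and $aca^{-1}=(ab^{-1}c)(c^{-1}b^2)(ab^{-1}c)^{-1}$ in $G$ --- so the residual words $r_i$ of the associated Mikhailova system are trivial even in $F(a,b,c)$. (This is elementary but fiddly and is most conveniently carried out with the \verb"AutomGrp" package, exactly as in Subsection~\ref{ssec:2193}.) Hence
\[
\begin{array}{lcl}
c^{-1}b^2 & = & (a,c),\\
ab^{-1}c & = & (b,b),\\
c & = & (c,a)
\end{array}
\]
is the sought diagonal form of $\St_G(1)$; both the first coordinates $\{a,b,c\}$ and the second coordinates $\{c,b,a\}$ generate $F(a,b,c)$, and the associated automorphism $\phi$ sending each generator to the corresponding second coordinate is $\phi(a)=c$, $\phi(b)=b$, $\phi(c)=a$ --- precisely the map $\eta$ of Proposition~\ref{prop:auto}.

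Proposition~\ref{prop:auto} already asserts that $\eta$ induces an automorphism of $G$, so Proposition~\ref{prop:ess_free_aut} applies and yields immediately that the action of $G$ on $\partial T_2$ is essentially free. The conceptually substantial work lies elsewhere: the fact that $\eta$ respects the relators of the presentation~\eqref{eqn:presentation_abc} was checked in Proposition~\ref{prop:auto} using the structure theorem $G\cong BS(1,3)\rtimes(\Z/2\Z)$ (Theorem~\ref{thm:2372}). So for this corollary there is no real obstacle; the only point demanding a moment's care is verifying that we are genuinely in the diagonal-type situation --- i.e.\ that the relevant coordinates span all of $F(a,b,c)$ and that the residuals vanish in $G$ --- which was noted above.
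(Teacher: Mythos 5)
Your argument is correct and is essentially the paper's own proof: exhibit $\St_G(1)$ in diagonal form with pairs $(a,c)$, $(b,b)$, $(c,a)$, recognize the induced map as the involution $\eta$ of Proposition~\ref{prop:auto}, and conclude by Proposition~\ref{prop:ess_free_aut}. The only (immaterial) difference is that you use $c^{-1}b^2$ where the paper uses $b^{-1}c^2b^{-1}c$ for the generator decomposing as $(a,c)$, and you spell out the Reidemeister--Schreier and Nielsen-reduction bookkeeping that the paper leaves implicit.
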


\begin{proof}
The stabilizer of the first level in $G$ is generated by
\[\begin{array}{lcl}
b^{-1}c^2b^{-1}c&=&(a,c),\\
ab^{-1}c&=&(b,b),\\
c&=&(c,a).
\end{array}\]

As in Corollary~\ref{cor:essfree2193}, in this situation Proposition~\ref{prop:auto} guarantees that we can apply Proposition~\ref{prop:ess_free_aut} and deduce that the action of $G$ on the boundary of the tree is essentially free.
\end{proof}

This subsection treated the last case in the proof of Theorem~\ref{thm:main}, thus finalizing the list of groups acting essentially freely on $\partial T_2$.
\vspace{3mm}

\subsection{Scale-invariant groups.}
We finish the proof of Theorem~\ref{thm:main} with a corollary describing all scale-invariant groups among those that are listed in this theorem.

\begin{corollary}
All groups listed in Theorem~\ref{thm:main} except finite nontrivial groups, $F_3$, and $(\Z/2\Z)*(\Z/2\Z)*(\Z/2\Z)$ are scale-invariant.
\end{corollary}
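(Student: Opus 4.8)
The plan is to obtain the corollary directly from Proposition~\ref{prop:stab_finite_index}, which says that a self-similar self-replicating group acting essentially freely on $\partial T(X)$ is scale invariant. Every group appearing in Theorem~\ref{thm:main} is an automaton group, hence self-similar, and for every group in the list other than $F_3$ and $(\Z/2\Z)*(\Z/2\Z)*(\Z/2\Z)$ we have already proved in Subsections~\ref{ssec:reduction}--\ref{ssec:2372} that its action on $\partial T_2$ is essentially free. Thus the whole task reduces to exhibiting, for each of the remaining \emph{infinite} groups, a self-similar realization (among those already analyzed) that is self-replicating. For the trivial group scale-invariance is immediate, with the constant sequence $\{1\},\{1\},\dots$; and the finite nontrivial groups, $F_3$, and $(\Z/2\Z)*(\Z/2\Z)*(\Z/2\Z)$ are genuinely excluded, since a finite-index subgroup of a finite group $G$ abstractly isomorphic to $G$ must equal $G$, and by the Nielsen--Schreier and Kurosh subgroup theorems (equivalently, by an Euler characteristic count) the only finite-index subgroup of $F_3$ or of $(\Z/2\Z)*(\Z/2\Z)*(\Z/2\Z)$ isomorphic to the whole group is the whole group itself; in each of these cases the intersection of any sequence of such subgroups is nontrivial.

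The next step is to verify the self-replicating property for the infinite groups in the list. Since an infinite self-similar group acting on the binary tree acts spherically transitively (Lemma~3 of~\cite{bondarenko_gkmnss:full_clas32_short}), self-replication is equivalent to surjectivity of the single homomorphism $\phi_0\colon\St_G(0)\to G$, $g\mapsto g|_0$, and this can be read off from the wreath recursions and the Mikhailova/Reidemeister--Schreier generating sets of $\St_G(1)$ computed earlier. Concretely: for $\Z$ use the adding-machine realization $a=(1,a)\sigma$, which acts freely on $\partial T_2$ and has $\St_G(0)=\langle a^2\rangle$ with $a^2=(a,a)$, so $\phi_0$ is onto; for $\Z^2$ (automaton $771$) the identity $a^{2n}c^m=(c^na^m,c^na^m)$ of Subsection~\ref{ssec:manual} shows $\phi_0(\St_G(0))=\langle a,c\rangle=G$; for $D_\infty$ one may take $G=\langle\sigma,q\rangle$ with $q=(q,\sigma)$, for which $\St_G(1)=\langle q,q^\sigma\rangle$ with $q|_0=q$ and $q^\sigma|_0=\sigma$ (this realization being essentially free because every reflection fixes a single boundary point and every nontrivial translation is fixed-point free); for $\LL$ (automaton $821$) the Mikhailova pairs $(a,b)$ and $(b,a)$ have $0$-sections $a$ and $b$; and for $BS(1,3)$, $BS(1,-3)$, $(\Z/2\Z\wr\Z)\rtimes(\Z/2\Z)$ (automaton $891$), $\Z^2\rtimes(\Z/2\Z)$, and the groups $G_{2193}$ and $G_{2372}$ the displayed generating sets of the first-level stabilizer --- in particular those in Corollaries~\ref{cor:essfree2193} and~\ref{cor:essfree2372}, whose $0$-sections are $a,b,c$ --- visibly generate the whole group, so $\phi_0$ is onto in each case (one confirms this in \verb"AutomGrp" where it is not transparent). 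In every case Proposition~\ref{prop:stab_finite_index} then yields scale-invariance.

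I expect the only delicate point to be the bookkeeping: one must make sure that, for each isomorphism type, the \emph{same} realization is simultaneously essentially free and self-replicating, since Proposition~\ref{prop:stab_finite_index} needs both hypotheses for one action. No new conceptual ingredient is required beyond Section~\ref{sec:types_of_actions}: the corollary is just ``essentially free $+$ self-replicating $\Rightarrow$ scale-invariant'' applied to each infinite group on the list, together with the elementary observation that the excepted groups fail to be scale-invariant.
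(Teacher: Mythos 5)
Your proposal is correct and follows essentially the same route as the paper: apply Proposition~\ref{prop:stab_finite_index} to a self-replicating, essentially free self-similar realization of each infinite group on the list, and rule out $F_3$ and $(\Z/2\Z)*(\Z/2\Z)*(\Z/2\Z)$ by a rank/Euler-characteristic count of finite-index subgroups. The only difference is that the paper delegates the self-replication check to the \verb"AutomGrp" package while you verify it by hand from the first-level stabilizer generators (and you are right, and slightly more careful than the paper, to insist that the \emph{same} realization carry both hypotheses).
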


\begin{proof}
The trivial group acts essentially freely by the definition. Using \verb"AutomGrp" package we deduce that for each group $G$ listed in the statement of this corollary, there is a 3-state automaton generating a group isomorphic to $G$ that is self-replicating.  Therefore, by Proposition~\ref{prop:stab_finite_index}, all these groups are scale-invariant.

On the other hand, $F_3$ is not scale-invariant since finite-index subgroups of $F_3$ are free groups of different ranks that cannot be isomorphic to $F_3$. For $G=(\Z/2\Z)*(\Z/2\Z)*(\Z/2\Z)=\langle a\rangle*\langle b\rangle*\langle c\rangle$ we appeal to the fact that this group has a finite homological type and has a subgroup $H=\langle ab,bc\rangle$ of index 2 isomorphic to a free group $F_2$ of rank 2. Therefore, the virtual Euler characteristic $\chi(G)$ of $G$ is equal to $\frac{\chi(H)}{[G:H]}=\frac{\chi(F_2)}{2}=-\frac12\neq 0$. Thus, for each proper finite index subgroup $K$ of $G$ we have $\chi(K)=[G:K]\chi(G)=-\frac12[G:K]\neq-\frac12=\chi(G)$. This shows that none of the proper finite index subgroups of $G$ is isomorphic to $G$ and hence $G$ is not scale-invariant.
\end{proof}

It is interesting to observe that all scale-invariant groups in the class under consideration are either virtually abelian, or are related to the lamplighter type groups or to the Baumslag-Solitar metabelian groups $B(1,n)$. This gives an additional motivation for Question~\ref{que:scaleinvar} below.

\section{Concluding remarks.}
\label{sec:concluding}
We end our paper with a list of some open questions and concluding remarks.

\begin{question}
Is there a group generated by finite automaton that acts neither essentially freely, nor totally non-freely on the boundary of a rooted tree? (Recall that the action is totally non-free if stabilizers of different points of the set of full measure are different).
\end{question}

\begin{question}
Does the total non-freeness of an action of a group generated by finite automaton  on $\partial T$ imply weak branchness? Observe, that the converse is true~\cite{bartholdi_g:parabolic,grigorch:dynamics11eng}.
\end{question}

\begin{question}
Classify all $(4,2)$-groups and $(2,3)$-groups that act essentially freely on the boundaries of corresponding rooted trees.
\end{question}

\begin{question}
\label{que:scaleinvar}
Are there groups generated by finite automata acting essentially freely on the boundary of rooted tree that are scale-invariant groups and are not based on the use of lamplighter type groups, metabelian Baumslag-Solitar groups $BS(1,n)$, and groups based on constructions from~\cite{nekrashevych_p:scale_invariant}? (See Corollary~\ref{prop:stab_finite_index} for motivation).
\end{question}

\begin{question}
\label{que:heredji}
Is there a hereditary just-infinite group generated by finite automaton? (See Proposition~\ref{prop:heredji} for motivation). Note that any such group will also be an answer to Question~\ref{que:scaleinvar}.
\end{question}

We finish this section with a discussion about singular points of actions that play an important role in the study of topological group actions~\cite{gns00:automata,vorobets:schreier_of_grigorchuk12,grigorch:dynamics11eng,savchuk:thompson2}. Recall that for an action of a group $G$ on a topological space $X$, a \emph{singular point} is a point which is not regular, i.e. such point $x\in X$ that $\Stg(x) \neq \Stg^0(x)$, where $\Stg^0(x)$ denotes the neighborhood stabilizer of $x$ (consisting of elements acting trivially on some neighborhood of $x$). The importance of these points is based on the fact that correspondence $x\mapsto\Stg(x)$ is continuous at regular points (where a natural topology is used on the space of subgroups), while it can be discontinuous at singular points as is observed in~\cite{vorobets:schreier_of_grigorchuk12} and~\cite{savchuk:thompson2}. For essentially free actions with invariant measure whose support is the whole space $X$, the neighborhood stabilizer is trivial for every $x \in X$, so singular points are points with nontrivial stabilizer. In the examples related to actions of self-similar groups on the boundary of rooted tree usually it is not easy to determine all singular points. For instance, for the action of the lamplighter group given by 2 state automaton as in~\cite{grigorch_z:lamplighter} a part of singular points was described in~\cite{nekrashevych_p:scale_invariant}, while the full description is given in~\cite{grigorch_k:lamplighter}. It is strange, that in all known essentially free actions of not virtually abelian groups generated by finite automata there is at least one singular point. It is an interesting open question if this is always the case.


\newcommand{\etalchar}[1]{$^{#1}$}
\def\cprime{$'$} \def\cprime{$'$} \def\cprime{$'$} \def\cprime{$'$}
  \def\cprime{$'$} \def\cprime{$'$} \def\cprime{$'$} \def\cprime{$'$}
  \def\cprime{$'$} \def\cprime{$'$} \def\cprime{$'$} \def\cprime{$'$}

\end{document}